\newtheorem{theorem}{Theorem}
\newtheorem{proposition}{Proposition}
\newtheorem{lemma}{Lemma}
\newtheorem{definition}{Definition}
\newtheorem{remark}{Remark}
\newtheorem{example}{Example}
\newcommand{\R}{\mathbb R}
\newcommand{\RR}{\overline{\mathbb R}}
\newcommand{\E}{\mathbb E}
\newcommand{\ld}[3]{#2^{(#1)}_{-} #3}
\newcommand{\lsubd}[2]{\partial^{(#1)}_{-} #2}
\newcommand{\subd}[2]{\partial #1 #2}
\newcommand{\sld}[2]{#1^{\pr\pr}_{-} #2}
\newcommand{\pr}{\prime}
\newcommand{\eps}{\varepsilon}
\newcommand{\f}{f : \E\to\R\cup\{+\infty\}}
\newcommand{\scalpr}[2]{\langle#1,#2\rangle}
\newcommand{\norm}[1]{\Vert#1\Vert}
\newcommand{\dom}[1]{{\rm dom}\,#1} 
\newcommand{\dini}[1]{#1^{\pr}_D}
\newcommand{\secdini}[1]{#1^{\pr\pr}_D}
\newcommand{\ph}{\varphi}
\title{Second-Order Optimality Conditions in Cone-Constrained Vector Optimization with Arbitrary Nondifferentiable Functions}
\author{Vsevolod I. Ivanov}
\begin{document}
\maketitle

\begin{abstract}
In this paper, we introduce a new second-order directional derivative and a second-order subdifferential of Hadamard type for an arbitrary nondifferentiable function. We derive several second-order optimality conditions for a local and a global minimum and an isolated local minimum of second-order in unconstrained optimization. In particular, we obtain two types results with generalized convex functions.
We also compare our conditions with the results of the recently published paper [Bedna\v r\'ik, D., Pastor, K.: On second-order conditions in unconstrained optimization. Math. Program. Ser A, {\bf 113}, 283--291 (2008)] and a lot of other works, published in high level journals, and prove that they are particular cases of our necessary and sufficient ones. We prove that the necessary optimality conditions concern more functions than the lower Dini directional derivative, even the optimality conditions with the last  derivative can be applied to a function, which does not belong to  some special class. At last, we apply our optimality criteria for unconstrained problems to derive necessary and sufficient optimality conditions for the cone-constrained vector problems.
\end{abstract}

Key words:\quad Nonsmooth optimization$\cdot\;$  Optimality conditions$\cdot\;$ Second-order directional derivative of Hadamard type  $\cdot\;$ Second-order isolated minimizers $\cdot\;$ Strict local minimizers of second-order $\cdot\;$  Generalized convex functions

MSC 2010:\quad 90C46 $\cdot\;$ 49J52 $\cdot\;$ 90C26 $\cdot\;$ 26B25

\section{Introduction}
\label{s1}
In our opinion the main aim of nondifferentiable optimization is to extend some results to as more as possible general classes of functions. The task to obtain optimality conditions in unconstrained optimization is old. There are first- and second-order necessary and sufficient conditions which concern several classes of functions (C$^{1,1}$, C$^1$, locally Lipschitz, lower semicontinuous and so on) in terms of various generalized derivatives. For all of them we should check that the function belongs to some special class, which is not easy sometimes. There are a lot of second-order generalized directional derivatives, whose necessary and sufficient conditions for optimality have similar proofs (see, for example, the references \cite{aus84,bp04,pas08,bt85,cha94,cha87,com91,cc90,ggr06,husn84,hua05,hua94,jl98,roc88,roc89,yan96,yan99,yj92}). This fact motivated us to find another derivative such that these conditions follow from the second-order ones in term of it, the second-order necessary conditions and the sufficient ones in unconstrained optimization are satisfied for arbitrary nondifferentiable function and the derivative coincides with the second-order Fr\'echet directional derivative in the case when the last one exists. They can be applied in nonlinear programming, for example, for solving the problem with penalty functions or reduce the problem to convex composite.  

In this paper, we introduce a new second-order generalized directional derivative. We obtain necessary and sufficient conditions for a local minimum and isolated local minimum of a function in terms of this derivative. In the conditions, we suppose that the function is arbitrary proper extended. Additionally, we derive second-order conditions, which are necessary and sufficient for a given point to be a global minimizer. They concern a new class of invex functions. We prove necessary and sufficient first-order conditions for a given point to be an isolated minimizer of order two of a strongly pseudoconvex function. Our generalized derivatives have the advantage that the proofs of the optimality conditions are simple. On the other hand, they are satisfied for arbitrary function, not necessarily with locally Lipschitz gradient, or continuously differentiable, or locally Lipschitz, or continuous, even not necessarily semicontinuous. We also compare our necessary and sufficient conditions with the respective ones in the references \cite{aus84,bp04,bt85,cha94,cha87,com91,cc90,ggr06,husn84,hua05,hua94,jl98,roc88,roc89,yan96,yan99,yj92}. We prove that the conditions in all these works are simple consequences of our necessary and sufficient conditions. On the other hand, the proofs given there are not so short sometimes. For example, the main result in the recently published in the journal Mathematical Programming paper \cite{pas08} is to extend the conditions for an isolated local minimum  in unconstrained optimization to $l$-stable functions. This is a class of functions, whose lower Dini directional derivatives satisfy a property, which is analogous to Lipschitz one. They include all C$^{1,1}$ functions. We prove that the main Theorem 6 in this paper follows from our Theorem \ref{th2} when the function is $l$-stable at the candidate for minimizer and continuous near it. Therefore, it is not necessary to guess and check if the function is $l$-stable. We also compare the necessary conditions in terms of Hadamard and Dini derivatives. We prove that our conditions are preferable. They concern more functions. 

At last, we obtain necessary and sufficient conditions for optimality in cone-constrained vector optimization. In particular, our results are satisfied for problems with inequality and equality constraints.

\section{A new second-order directional derivative and subdifferential of Hadamard type}
We suppose that $\E$ is a real finite-dimensional Euclidean space. Denote by $\R$ the set of reals and $\RR=\R\cup\{-\infty\}\cup\{+\infty\}$.
Let us consider the following second-order directional derivative at the point $x\in\E$ in direction $u\in\E$ of a given function $f$, defined in the space $\E$, which was introduced in \cite{gin02}:
\[
f^{[2]}_G(x;u):=\liminf_{t\downarrow 0,u^\pr\to u}\,2t^{-2}[f(x+tu^\pr)-f^{[0]}_G(x;u)-t f^{[1]}(x;u)],
\]
where $f^{[0]}_G(x;u):=\liminf_{t\downarrow 0,u^\pr\to u}\,f(x+tu^\pr)$ and 
\[
f^{[1]}_G(x;u):=\liminf_{t\downarrow 0,u^\pr\to u}\,t^{-1}[f(x+tu^\pr)-f^{[0]}_G(x;u)].
\]
  The optimality conditions for unconstrained problems were derived for arbitrary nondifferentiable function. Suppose that the function is twice  Fr\'echet differentiable. Then
$f^{[0]}_G(x;u)=f(x)$, $f^{[1]}_G(x;u)=\nabla f(x)(u)$ and 
\begin{equation}\label{IG}
f^{[2]}_G(x;u):=\liminf_{t\downarrow 0,u^\pr\to u}\,2t^{-2}[f(x+tu^\pr)-f(x)-t\nabla f(x)(u)].
\end{equation}
 We obtain by second-order Taylor's formula with a reminder in the form of Peano \cite{atf87} that
\[
f(x+t u^\pr)=f(x)+\nabla f(x)(t u^\pr)+\frac{1}{2}\nabla^2 f(x)(tu^\pr)(tu^\pr)+o(t^2),
\]
 where $\lim_{t\downarrow 0} o(t^2)/t^2=0$. Therefore,
\[
f^{[2]}_G(x;u)=\nabla^2 f(x)(u)(u)+\liminf_{t\downarrow 0,u^\pr\to u}\, 2t^{-1}[\nabla f(x)(u^\pr-u)].
\]
It follows from here that if $\nabla f(x)\ne 0$, then $f^{[2]}_G(x;u)\ne\nabla^2 f(x)(u)(u)$. Really, we have $f^{[2]}_G(x;u)=-\infty$ for every direction $u\in\E$. Hence, $f^{[2]}_G(x;u)$ is not exactly a derivative. 
Our task is to define a second-order directional derivative such that the necessary conditions and the sufficient ones in unconstrained optimization are satisfied for arbitrary nondifferentiable function and the derivative coincides with the second-order Fr\'echet directional derivative provided that the last one exists. 
How can we do this keeping the convergence $u^\pr\to u$? A possible decision is to denote $v=(u^\pr-u)/t$ and take $v\to 0$. Thus, we obtain the derivative
\[
f^{!!}(x;u):=\liminf_{t\downarrow 0,v\to 0}\,2t^{-2}[f(x+tu+t^2 v)-f(x)-t\nabla f(x)(u)].
\]
This derivative and also higher-order ones were studied in the work \cite{bas01}. In the present paper, we develop another idea to replace in Equation (\ref{IG}) in the expression $\nabla f(x)(u)$ the variable $u$ by $u^\pr$. Thus, we obtain the derivative
\[
\ld 2 f(x;u)=\liminf_{t\downarrow 0,u^\pr\to u}\,2t^{-2}[f(x+tu^\pr)-f(x)-t\nabla f(x)(u^\pr)].
\]

We introduce a new second-order derivative and a second-order subdifferential, which are based on the presented observation. Let $X$ and $Y$ be two linear spaces and $L(X,Y)$ be the space of all continuous linear operators from $X$ to $Y$. Then denote by $L^1(\E)$ the space $L(\E,\R)$, by $L^2(\E)$ the space $L(\E,L^1(\E))$.   Consider a proper extended real function $\f$, that is a function, which never takes the value $-\infty$ and at least one value is finite. The domain of a proper extended real function is the set:
${\rm dom}\; f:=\{x\in\E| f(x)<+\infty\}.$

\begin{definition}
The lower Hadamard directional derivative of a function $\f$
at a point $x\in\dom f$ in direction $u\in\E$ is defined as follows:
\[
\ld 1 f(x;u)=\liminf_{t\downarrow 0,u^\pr\to u}\,
t^{-1}[f(x+t u^\pr)-f(x)].
\]
\end{definition}

It follows from this definition that, if $\ld 1 f(x;u)$ is finite, then the 
direction $u$ belongs to the Bouligand tangent cone of the domain of the function $f$.

\begin{definition}
Recall that the lower Hadamard subdifferential of the function $\f$ at the point
$x\in\dom f$ is defined  by the following relation:
\[
\lsubd 1 f(x)=\{ x^*\in L^1(\E)\mid x^*(u)\le\ld 1 f (x;u)\quad\textrm{for all directions}\quad u\in\E\}.
\]
\end{definition}

We introduce the following definitions:

\begin{definition}\label{df-2had}
Let $\f$ be an arbitrary proper extended real function.
Suppose that $x^*_1$ is a fixed element from the lower Hadamard subdifferential
$\lsubd 1 f (x)$ at the point $x\in\dom f$. Then the lower
second-order derivative of Hadamard type of $f$ at $x\in\dom f$ in direction $u\in\E$ is
defined as follows:
\[
\ld 2 f(x;x^*_1;u)=\liminf_{t\downarrow 0,u^\pr\to u}\,
2t^{-2}[f(x+t u^\pr)-f(x)-tx^*_1(u^\pr)].
\]

\end{definition}

\begin{definition}
Let $\f$ be an arbitrary proper extended real function. Suppose that $x\in\dom f$, $x^*_1\in\lsubd 1 f(x)$.
The lower second-order Hadamard subdifferential of the function $\f$ at the point
$x\in\dom f$ is defined  by the following relation:
\[
\lsubd 2 f(x;x^*_1)=\{ x^*\in L^2(\E)\mid x^*(u)(u)\le\ld 2 f ( x;x^*_1;u)\;\textrm{for all directions}\; u\in\E\}.
\]
\end{definition}

The next claim follows from the given above discussion .

\begin{proposition}
Let the function $f:\E\to\R$ be twice Fr\'echet differentiable at the point $x$ and Fr\'echet differentiable on some neighborhood of $x$ with first- and second-order Fr\'echet derivatives $\nabla f$ and $\nabla^2 f$. Then
\[
\ld 1 f(x;u)=\nabla f(x)(u);\quad\lsubd 1 f(x)=\{\nabla f(x)\},\quad
\]
\[
\ld 2 f(x;\nabla f(x);u)=\nabla^2 f(x)(u)(u)
\]
\end{proposition}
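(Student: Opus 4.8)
The plan is to verify the three asserted identities in turn, each being an immediate consequence of the corresponding order of Fréchet approximation together with the linearity and continuity of the derivative maps. For the first identity I would start from the first-order expansion $f(x+h)=f(x)+\nabla f(x)(h)+o(\norm h)$, valid since $f$ is Fréchet differentiable at $x$, and substitute $h=tu^\pr$ with $t\downarrow 0$ and $u^\pr\to u$. Dividing by $t$ gives
\[
t^{-1}[f(x+tu^\pr)-f(x)]=\nabla f(x)(u^\pr)+\frac{o(t\norm{u^\pr})}{t}.
\]
Because $u^\pr\to u$ keeps $\norm{u^\pr}$ bounded, the remainder tends to $0$, while $\nabla f(x)(u^\pr)\to\nabla f(x)(u)$ by continuity of the linear map $\nabla f(x)$. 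Hence the quantity under the liminf actually converges, and its limit $\nabla f(x)(u)$ is therefore also the value of $\ld 1 f(x;u)$.

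For the second identity I would exploit that the function $u\mapsto\ld 1 f(x;u)=\nabla f(x)(u)$ just computed is itself linear. By definition, $x^*\in\lsubd 1 f(x)$ means $x^*(u)\le\nabla f(x)(u)$ for every $u\in\E$. Applying this inequality to $u$ and to $-u$ and using the linearity of $x^*$ and of $\nabla f(x)$ forces $x^*(u)=\nabla f(x)(u)$ for all $u$, i.e.\ $x^*=\nabla f(x)$. Conversely $\nabla f(x)$ plainly satisfies the defining inequality with equality, so $\lsubd 1 f(x)=\{\nabla f(x)\}$.

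For the third identity I would insert $x^*_1=\nabla f(x)$ into Definition~\ref{df-2had} and invoke the second-order Taylor formula with Peano remainder quoted in the excerpt, $f(x+h)=f(x)+\nabla f(x)(h)+\tfrac12\nabla^2 f(x)(h)(h)+o(\norm h^2)$, with $h=tu^\pr$. This yields
\[
2t^{-2}[f(x+tu^\pr)-f(x)-t\nabla f(x)(u^\pr)]=\nabla^2 f(x)(u^\pr)(u^\pr)+\frac{2\,o(t^2\norm{u^\pr}^2)}{t^2}.
\]
As $t\downarrow 0$ and $u^\pr\to u$, the quadratic form $\nabla^2 f(x)(u^\pr)(u^\pr)$ converges to $\nabla^2 f(x)(u)(u)$ by continuity of the bilinear map $\nabla^2 f(x)$, so the assertion reduces to showing that the remainder term vanishes. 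This is the step I expect to be the main (indeed the only) obstacle, since the $o(\cdot)$ lives against $\norm{tu^\pr}^2$ but is divided by $t^2$ rather than by $\norm{tu^\pr}^2$. The way around it is to write $2\,o(t^2\norm{u^\pr}^2)/t^2=2\norm{u^\pr}^2\cdot o(\norm{tu^\pr}^2)/\norm{tu^\pr}^2$ and to observe that $h=tu^\pr\to 0$ drives the last fraction to $0$ while $\norm{u^\pr}^2$ stays bounded; the degenerate instances $u^\pr=0$ cause no trouble, as the bracket then vanishes identically. Thus the liminf is again an honest limit, equal to $\nabla^2 f(x)(u)(u)$, which completes the argument.
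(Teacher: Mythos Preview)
Your proof is correct and follows essentially the same approach as the paper: the paper does not spell out a separate proof but simply says the claim ``follows from the given above discussion,'' that discussion being the second-order Taylor expansion with Peano remainder applied to $f(x+tu^\pr)$. Your argument is in fact more detailed and careful than the paper's, particularly in your handling of the remainder term $o(\norm{tu^\pr}^2)/t^2$ and in your explicit verification of $\lsubd 1 f(x)=\{\nabla f(x)\}$ via the $u\leftrightarrow -u$ linearity trick, which the paper does not write out at all.
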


\section{Optimality conditions for a local minimum and isolated local minimum of second-order}

In this section, we obtain optimality conditions for unconstrained problems in terms of the introduced second-order derivative.

\begin{theorem}\label{th1}
Let $\bar x\in\dom f $ be a local minimizer of the function $f$. Then
\begin{equation}\label{15}
\ld 1 f(\bar x;u)\ge 0,\quad\ld 2 f(\bar x;0;u)\ge 0,\quad\textrm{ for all }u\in\E.
\end{equation}
\end{theorem}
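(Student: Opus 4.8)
The plan is to derive both inequalities directly from the definitions of the two Hadamard derivatives together with the local minimality of $\bar x$; the only fact needed is that the increment $f(\bar x + t u^\pr) - f(\bar x)$ is nonnegative once the perturbed point lies in the neighborhood on which $\bar x$ minimizes $f$. So I would first fix a direction $u \in \E$ and choose a neighborhood $N$ of $\bar x$ with $f(\bar x) \le f(y)$ for every $y \in N$. The key elementary observation is that, as $t \downarrow 0$ and $u^\pr \to u$ jointly, the displacement $t u^\pr$ tends to $0$ because $u^\pr$ stays bounded near $u$; hence $\bar x + t u^\pr \to \bar x$, and there exist $\delta > 0$ and a neighborhood $U$ of $u$ such that $\bar x + t u^\pr \in N$ whenever $0 < t < \delta$ and $u^\pr \in U$. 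For every such pair $(t, u^\pr)$ the increment $f(\bar x + t u^\pr) - f(\bar x)$ is nonnegative.

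For the first-order claim, multiplying this nonnegative increment by the positive factor $t^{-1}$ gives $t^{-1}[f(\bar x + t u^\pr) - f(\bar x)] \ge 0$ for every admissible pair, and passing to the liminf over $t \downarrow 0$, $u^\pr \to u$ in the definition of $\ld 1 f(\bar x; u)$ preserves the inequality, so $\ld 1 f(\bar x; u) \ge 0$. The second-order claim has an identical structure: the same nonnegative increment is multiplied instead by $2 t^{-2} > 0$, so $2 t^{-2}[f(\bar x + t u^\pr) - f(\bar x)] \ge 0$, and taking the liminf in the definition of $\ld 2 f(\bar x; 0; u)$ --- which reduces to exactly this expression because the choice $x^*_1 = 0$ annihilates the linear term $t x^*_1(u^\pr)$ --- yields $\ld 2 f(\bar x; 0; u) \ge 0$. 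Since $u \in \E$ was arbitrary, both inequalities in (\ref{15}) follow.

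I do not expect a genuine obstacle here: the argument is a short sign chase once the perturbed points are confined to the minimizing neighborhood. The only point deserving a little care is the joint passage to the limit --- one must note that $u^\pr$ remaining near $u$ keeps $\norm{t u^\pr}$ bounded by a constant multiple of $t$, so that $\bar x + t u^\pr$ genuinely enters $N$ for all small $t$ simultaneously, rather than letting $t \downarrow 0$ and $u^\pr \to u$ act separately. It is also worth remarking that the first inequality is precisely the statement $0 \in \lsubd 1 f(\bar x)$, which is what makes $\ld 2 f(\bar x; 0; u)$ a legitimately defined object in the sense of Definition \ref{df-2had}.
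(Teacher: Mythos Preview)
Your proposal is correct and follows essentially the same approach as the paper's own proof: both argue that local minimality forces the increment $f(\bar x + t u^\pr) - f(\bar x)$ to be nonnegative for all small $t$ and $u^\pr$ near $u$, then divide by $t$ and by $t^2$ respectively and pass to the $\liminf$. Your write-up is somewhat more explicit about the joint convergence (the existence of $\delta$ and $U$) and about why $0 \in \lsubd 1 f(\bar x)$ legitimizes the second-order derivative, but the underlying argument is identical.
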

\begin{proof}
Since $\bar x$ is a local minimizer, then it follows from the definition of the lower Hadamard directional derivative that
there exists a neighborhood $N\ni\bar x$ with $f(x)\ge f(\bar x)$ for all $x\in N$. Let $u\in\E$ be an arbitrary chosen direction. Then $f(\bar x+t u^\pr)\ge f(\bar x)$ for all sufficiently small positive numbers $t$ and for all directions $u^\pr$, which are  sufficiently close to $u$. It follows from here that $\ld 1 f (\bar x;u)\ge 0$. Therefore $0\in\lsubd 1 f(\bar x)$, because $u\in\E$ is an arbitrary direction.

By the definition of the second-order lower derivative,
using that $0\in\lsubd 1 f(\bar x)$ we obtain that $\ld 2 f(\bar x;0;u)$ is well defined and
\[
\ld 2 f (\bar x;0;u)=\liminf_{t\downarrow 0,u^\pr\to u,}\, 2\, t^{-2}
[f(\bar x+t u^\pr)-f(\bar x)]\ge 0
\]
for all directions $u\in\E$. 
\qed\end{proof}

\begin{remark}
Condition (\ref{15}) is equivalent to the following one:
\begin{equation}\label{14}
0\in\lsubd 1 f(\bar x)\quad\textrm{and}\quad  0\in\lsubd 2 f(\bar x;0).
\end{equation}
\end{remark}

The following definition is well known. 

\begin{definition}
A point $\bar x\in\dom f$ is called an isolated local minimizer of second-order for the function $\f$ iff there exist a neighborhood $N$ of $\bar x$ and a constant $C>0$ with
\begin{equation}\label{1}
f(x)>f(\bar x)+C\norm{x-\bar x}^2,\quad\forall  x\in N, x\ne \bar x.
\end{equation}
\end{definition}

\begin{theorem}\label{th2}
Let be given a proper extended real function $\f$ and $\bar x\in\dom f$. Then the following claims are equivalent:

a) $\bar x$ is an isolated local minimizer of second-order;

b) the following conditions holds for all $u\in\E$:
\begin{equation}\label{6}
\ld 1 f(\bar x;u)\ge 0\quad\textrm{and}\quad \ld 2 f(\bar x;0;u)>0, u\ne 0; 
\end{equation}

c) the following conditions
\begin{equation}\label{4}
\ld 1 f(\bar x;u)\ge 0,\quad\forall u\in\E
\end{equation}
and 
\begin{equation}\label{5}
u\ne 0,\;\ld 1 f(\bar x;u)=0,\quad\Rightarrow\quad
\ld 2 f(\bar x;0;u)>0,.
\end{equation}
are satisfied.
\end{theorem}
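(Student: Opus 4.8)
The plan is to prove the three equivalences by establishing (a)$\Leftrightarrow$(b) together with (b)$\Leftrightarrow$(c), separating the passage between the two purely directional conditions from the passage to the geometric condition. The whole argument rests on one elementary observation, which I would isolate first: if $\ld 1 f(\bar x;u)>0$ for some direction $u$, then $\ld 2 f(\bar x;0;u)=+\infty$. Indeed, fixing $\beta$ with $0<\beta<\ld 1 f(\bar x;u)$, the definition of the lower limit yields a $\delta>0$ such that $t^{-1}[f(\bar x+tu^\pr)-f(\bar x)]>\beta$ whenever $0<t<\delta$ and $\norm{u^\pr-u}<\delta$; multiplying by $2t^{-1}$ gives $2t^{-2}[f(\bar x+tu^\pr)-f(\bar x)]>2\beta/t\to+\infty$, so the lower limit defining $\ld 2 f(\bar x;0;u)$ equals $+\infty$. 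With this in hand, (b)$\Leftrightarrow$(c) is immediate. The direction (b)$\Rightarrow$(c) is trivial, since the conclusion of the implication (\ref{5}) is asserted unconditionally by (\ref{6}). Conversely, under (c) any $u\ne0$ satisfies either $\ld 1 f(\bar x;u)>0$, whence $\ld 2 f(\bar x;0;u)=+\infty>0$ by the observation, or $\ld 1 f(\bar x;u)=0$, whence $\ld 2 f(\bar x;0;u)>0$ by (\ref{5}); in both cases (\ref{6}) holds.

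For (a)$\Rightarrow$(b) I would argue directly from the quadratic growth estimate. Fix the neighborhood $N$ and constant $C>0$ from (\ref{1}). For a fixed direction $u$, all small $t>0$ and all $u^\pr$ near $u$ keep $\bar x+tu^\pr$ inside $N$; when $u\ne0$ one also has $u^\pr\ne0$, so (\ref{1}) gives $f(\bar x+tu^\pr)-f(\bar x)>Ct^2\norm{u^\pr}^2$. Dividing by $t$ and letting $t\downarrow0$, $u^\pr\to u$ shows $\ld 1 f(\bar x;u)\ge0$ (the case $u=0$ being trivial, since the quotient is nonnegative), while multiplying by $2t^{-2}$ and passing to the lower limit gives $\ld 2 f(\bar x;0;u)\ge 2C\norm{u}^2>0$ for $u\ne0$. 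This is exactly (\ref{6}).

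The substantial step is (b)$\Rightarrow$(a), and here the finite dimensionality of $\E$ is essential. I would proceed by contradiction: if $\bar x$ is not an isolated minimizer of second order, then applying the negation of (\ref{1}) with $C=1/n$ and the neighborhood $N=\{x:\norm{x-\bar x}<1/n\}$ produces points $x_n\ne\bar x$ with $\norm{x_n-\bar x}<1/n$ and $f(x_n)\le f(\bar x)+n^{-1}\norm{x_n-\bar x}^2$. Writing $t_n=\norm{x_n-\bar x}\downarrow0$ and $u_n=(x_n-\bar x)/t_n$, one has $\norm{u_n}=1$, so by compactness of the unit sphere a subsequence satisfies $u_{n_k}\to u$ with $\norm{u}=1$, in particular $u\ne0$. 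Along this subsequence $2t_{n_k}^{-2}[f(\bar x+t_{n_k}u_{n_k})-f(\bar x)]\le 2/n_k\to0$, and since $(t_{n_k},u_{n_k})$ is an admissible sequence in the lower limit defining $\ld 2 f(\bar x;0;u)$, this forces $\ld 2 f(\bar x;0;u)\le0$, contradicting (\ref{6}). Hence (a) holds, and the three conditions are equivalent.

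I expect the main obstacle to be precisely this last implication. The hypotheses in (b) and (c) are stated directionwise, whereas (a) demands a single constant $C$ valid on an entire punctured neighborhood, and closing this gap is exactly what compactness of the unit sphere supplies; without finite dimensionality the extraction of the convergent subsequence $u_{n_k}\to u$ would fail. The remaining issues are purely bookkeeping: the degenerate direction $u=0$, the possibility that $f$ takes the value $+\infty$ off its domain (which only reinforces the inequalities), and the distinction between the strict inequality in (\ref{1}) and the nonstrict one obtained when negating it.
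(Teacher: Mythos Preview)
Your proof is correct and uses the same two substantive ideas as the paper: the quadratic growth estimate (\ref{1}) for (a)$\Rightarrow$(b), and a contradiction argument via compactness of the unit sphere for the return direction. The only difference is organizational. The paper argues cyclically, (a)$\Rightarrow$(b)$\Rightarrow$(c)$\Rightarrow$(a), so it never needs your preliminary observation that $\ld 1 f(\bar x;u)>0$ forces $\ld 2 f(\bar x;0;u)=+\infty$; in exchange, its (c)$\Rightarrow$(a) step must first verify $\ld 1 f(\bar x;d)=0$ before (\ref{5}) can be invoked. Your decomposition (a)$\Leftrightarrow$(b) and (b)$\Leftrightarrow$(c) trades that line for the explicit (c)$\Rightarrow$(b) argument, which is a genuine (if elementary) addition and makes the equivalence of (b) and (c) self-contained rather than a byproduct of the cycle.
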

\begin{proof}
It is obvious that the implication b) $\Rightarrow$ c) holds. We prove a) $\Rightarrow$ b).  Let $\bar x$ be an isolated local minimizer of second-order. We prove that Conditions (\ref{6}) hold.
Suppose that $u\in\E$ is arbitrary chosen. It follows from Inequality (\ref{1}) that there exist numbers $\delta>0$, $\varepsilon>0$ and $C>0$ with
\begin{equation}\label{3}
f(\bar x+tu^\pr)\ge f(\bar x)+Ct^2\norm{u^\pr}^2
\end{equation}
for all  $t\in (0,\delta)$ and every $u^\pr$ such that $\norm{u^\pr-u}<\varepsilon$. Therefore 
\begin{equation}\label{2}
\ld 1 f (\bar x;u)=\liminf_{t\downarrow 0,u^\pr\to u,}\,t^{-1}[f(\bar x+t u^\pr)-f(\bar x)]\ge\liminf_{t\downarrow 0,u^\pr\to u,}\, Ct\norm{u^\pr}^2=0.
\end{equation}
According to Inequality (\ref{2}) we have   $0\in\lsubd 1 f(\bar x)$. 
It follows from (\ref{3}) that
\[
\ld 2 f(\bar x;0;u)=\liminf_{t\downarrow 0,u^\pr\to u,}\,2t^{-2}[f(\bar x+t u^\pr)-f(\bar x)] \\
\ge\liminf_{t\downarrow 0,u^\pr\to u,}\, 2\, C\norm{u^\pr}^{2}=2\, C\norm{u}^2>0
\]
for all directions $u$ such that $u\ne 0$.

We prove c) $\Rightarrow$ a). Suppose that conditions {\rm (\ref{4})} and {\rm (\ref{5})}  hold. We prove that $\bar x$ is an isolated local minimizer of second-order.
Assume the contrary that $\bar x$ is not an isolated minimizer.
Therefore, for every sequence $\{\varepsilon_k\}_{k=1}^\infty$ of positive numbers
converging to zero, there exists a sequence $\{x_k\}$ with $x_k\in\dom f$ such that
\begin{equation}\label{60}
\norm{x_k-\bar x}\le \varepsilon_k,\quad
f(x_k)< f(\bar x)+\varepsilon_k\norm{x_k-\bar x}^2,
\end{equation}

It follows from (\ref{60}) that $x_k\to\bar x$. Denote $t_k=\norm{x_k-\bar x}$,
$d_k=(x_k-\bar x)/t_k$. Passing to a subsequence, we may suppose
that $d_k\to  d$ where $\norm{d}=1$. It follows from here that
\[
\ld 1 f (\bar x;d)\le\liminf_{k\to\infty}\, t^{-1}_k[f(\bar x+t_k d_k)-f(\bar x)]
\le\liminf_{k\to\infty}\varepsilon_k t_k=0.
\]
It follows from (\ref{4}) that $0\in\subd f(\bar x)$ and $\ld 1 f (\bar x;d)=0$. We have
\[
\ld 2 f (\bar x;0;d)\le\liminf_{k\to\infty}\,2 t^{-2}_k[f(\bar x+t_k d_k)-f(\bar x)]\le\liminf_{k\to 0}\, 2\varepsilon_k=0 
\]
which is contrary to (\ref{5}).
\end{proof}

\section{Conditions for a global minimum of a second-order invex function}

The following question arises: Which is the largest class of functions such that the necessary conditions  from Theorem \ref{th1} become sufficient for a global minimum. 
Recently Ivanov \cite{optimization} introduced a new class of Fr\'echet differentiable functions called second-order invex ones in terms of the classical second-order directional derivative. They extend the so called invex ones and obey the following property: A Fr\'echet differentiable function is second-order invex if and only if each second-order stationary point is a global minimizer. We generalize this notion to arbitrary nondifferentiable functions in terms of the lower Hadamard directional derivatives of second-order. 

We recall the definition of an invex function \cite{han81}. We apply the lower Hadamard directional derivative here.

\begin{definition}
 A proper extended real function $\f$ is called invex in terms of the lower Hadamard directional derivative iff there exists a map
$\eta: \E\times \E\to\E$  such that the following inequality holds for all $x\in\dom f $, $y\in\E$: 
\begin{equation}\label{13}
f(y)-f(x)\ge\ld 1 f(x;\eta(x,y)). 
\end{equation}
\end{definition}

We introduce the following two definitions:
\begin{definition}
We call a function $\f$ second-order  invex (for short, $2$-invex) in terms of the lower Hadamard derivatives  iff for every $\bar x\in\dom f$, $x\in\E$  with $0\in\lsubd 1 f(\bar x)$
there are $\eta_1$, $\eta_2$, which depend on $\bar x$ and $x$ such that the  following inequality holds 
\begin{equation}\label{8}
f(x)-f(\bar x)\ge\ld 1 f(\bar x;\eta_1(\bar x,x))+\ld 2 f(\bar x;0;\eta_2(\bar x,x)).
\end{equation}
\end{definition}

\begin{definition}\label{df-2st}
Let $\f$ be a given proper extended real function. We call   every point $\bar x\in\dom f$ such that 
\[
\ld 1 f(\bar x;u)\ge 0,\; \ld 2 f(\bar x;0;u)\ge 0,\quad\forall u\in\E
\]
second-order stationary  (for short, $2$-stationary point).
\end{definition}


\begin{theorem}
Let  $\f$ be a proper extended real function.  Then $f$ is second-order invex if and only if each second-order stationary point $\bar x\in\dom f$ is a global minimizer of $f$.
\end{theorem}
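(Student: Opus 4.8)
The statement is an equivalence, so the plan is to prove the two implications separately; the forward one will be immediate from the definitions, while the reverse one will require constructing suitable maps $\eta_1,\eta_2$. First I would treat the implication that $2$-invexity forces every $2$-stationary point to be a global minimizer. Let $\bar x\in\dom f$ be $2$-stationary, so $\ld 1 f(\bar x;u)\ge 0$ and $\ld 2 f(\bar x;0;u)\ge 0$ for all $u\in\E$; in particular $0\in\lsubd 1 f(\bar x)$. Fix $x\in\E$. If $x\notin\dom f$ then $f(x)=+\infty>f(\bar x)$, so I may assume $x\in\dom f$; then $2$-invexity supplies $\eta_1,\eta_2$ with $f(x)-f(\bar x)\ge\ld 1 f(\bar x;\eta_1(\bar x,x))+\ld 2 f(\bar x;0;\eta_2(\bar x,x))$, and both terms on the right are nonnegative by $2$-stationarity, whence $f(x)\ge f(\bar x)$. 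As $x$ is arbitrary, $\bar x$ is a global minimizer.

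Before the converse I would record two elementary facts. Evaluating the defining $\liminf$ along the constant path $u^\pr\equiv 0$ gives $\ld 1 f(\bar x;0)\le 0$ and $\ld 2 f(\bar x;0;0)\le 0$; together with $0\in\lsubd 1 f(\bar x)$ the first yields $\ld 1 f(\bar x;0)=0$. The key structural fact is that $u\mapsto\ld 2 f(\bar x;0;u)$ is positively homogeneous of degree two: the substitution $u^\pr=\lambda v^\pr$, $s=\lambda t$ in the $\liminf$ gives $\ld 2 f(\bar x;0;\lambda u)=\lambda^2\,\ld 2 f(\bar x;0;u)$ for every $\lambda>0$.

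Now assume every $2$-stationary point is a global minimizer, and let $\bar x\in\dom f$ with $0\in\lsubd 1 f(\bar x)$ and $x\in\E$ be given; I must produce $\eta_1,\eta_2$ realizing (\ref{8}), and I would split into two cases. If $\bar x$ is $2$-stationary it is, by hypothesis, a global minimizer, so $f(x)-f(\bar x)\ge 0$ and the choice $\eta_1=\eta_2=0$ works, the right-hand side being $\ld 1 f(\bar x;0)+\ld 2 f(\bar x;0;0)\le 0$. If instead $\bar x$ is not $2$-stationary, there is a $w$ with $\ld 2 f(\bar x;0;w)<0$; taking $x\in\dom f$ (the complementary case being trivial), I set $\eta_1=0$ and $\eta_2=\lambda w$ with $\lambda>0$ chosen by the homogeneity so large that $\lambda^2\,\ld 2 f(\bar x;0;w)\le f(x)-f(\bar x)$, making the right-hand side at most $f(x)-f(\bar x)$.

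The main obstacle is exactly this last case: a point satisfying the first-order condition $0\in\lsubd 1 f(\bar x)$ but failing the second-order one need not be a minimizer, so the left-hand side of (\ref{8}) may be arbitrarily negative and must still be dominated. The degree-two homogeneity is what rescues the construction, since it lets $\ld 2 f(\bar x;0;\eta_2)$ be driven to $-\infty$ along the ray $\{\lambda w:\lambda>0\}$; the remaining bookkeeping — the sign of the derivatives at $u=0$ and the convention that a value $\ld 2 f(\bar x;0;w)=-\infty$ makes the inequality hold trivially — is routine.
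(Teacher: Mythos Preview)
Your proof is correct. The forward implication matches the paper's argument verbatim. For the converse, the paper argues by contradiction: assuming that (\ref{8}) fails for every choice of $\eta_1,\eta_2$, it shows successively that $f(x)<f(\bar x)$, that $\ld 1 f(\bar x;\cdot)\ge 0$, and that $\ld 2 f(\bar x;0;\cdot)\ge 0$ (the last two by exploiting the positive homogeneity to violate the assumed strict inequality), and then invokes the hypothesis to reach a contradiction. You instead give a direct construction, splitting on whether $\bar x$ is $2$-stationary and, in the non-stationary case, scaling along a direction of negative second-order derivative. The underlying ingredients are identical --- the values $\ld 1 f(\bar x;0)=0$, $\ld 2 f(\bar x;0;0)\le 0$, and the degree-two homogeneity of $\ld 2 f(\bar x;0;\cdot)$ --- so the two arguments are really contrapositives of one another. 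Your organization is slightly more economical: once $0\in\lsubd 1 f(\bar x)$ is assumed, the first-order inequality $\ld 1 f(\bar x;u)\ge 0$ is automatic from the definition of the subdifferential, so the paper's separate verification of this step (its ``Second'' paragraph, using homogeneity of $\ld 1 f$) is redundant, and your direct approach bypasses it entirely.
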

\begin{proof}
Suppose that $f$ is second-order invex. If the function has no stationary points, then obviously every second-order stationary point is a global minimizer. Suppose that the function has at least one second-order stationary point $\bar x$, that is a point satisfying Definition \ref{df-2st}. We prove that it  is a global minimizer.
Let  $x$ be an arbitrary point from $\E$. It follows from second-order invexity that there exist $\eta_i(\bar x,x)$, $i=1,2$ such that Condition (\ref{8}) is satisfied.
Since $\bar x$ is a second-order stationary point, then 
\[
\ld 1 f(\bar x;\eta_1(\bar x,x))\ge 0,\;\ld 2 f(\bar x;0;\eta_2(\bar x,x))\ge 0.
\]
It follows from (\ref{8}) that $f(x)\ge f(\bar x)$. Therefore $\bar x$ is a global minimizer.

Conversely, suppose that every second-order stationary point is a global minimizer. We prove that $f$ is second-order invex. 
Assume the contrary.  Hence, there exists a pair $(\bar x,x)\in\dom f\times \E$ such that $0\in\lsubd 1 f(\bar x)$ and   the following inequality holds
\begin{equation}\label{10}
f(x)-f(\bar x)<\ld 1 f(\bar x;y)+\ld 2 f(\bar x;0;z),\quad\forall y\in\E,\;\forall z\in\E.
\end{equation}

First, we prove that $f(x)<f(\bar x)$. Let us choose in (\ref{10}) $y=0$, $z=0$. We have 
\[
\ld 1 f(\bar x;0)\le\liminf_{t\downarrow 0}\,t^{-1}(f(\bar x+t.0)-f(\bar x))=0,
\]
\[\begin{array}{c}
\ld 2 f(\bar x;0;0)= \liminf_{t\downarrow 0,u^\pr\to 0}\,2t^{-2}[f(\bar x+t.u^\pr)-f(\bar x)]\le 0.
\end{array}\] 
 It follows from (\ref{10}) that $f(x)<f(\bar x)$.

Second, we prove that  
\begin{equation}\label{11}
\ld 1 f(\bar x;u)\ge 0,\quad\forall u\in\E.
\end{equation} 
Suppose the contrary that there exists at least one point $v\in\E$ with $\ld 1 f(\bar x;v)<0$. The lower Hadamard directional derivative is positively homogeneous with respect to the direction, that is 
\[
\ld 1 f(\bar x;\tau u)=\tau\ld 1 f(\bar x;u)\quad\forall \bar x\in\dom f,\;\forall u\in\E,\;\forall \tau\in(0,+\infty).
\]
Then inequality (\ref{10}) cannot be satisfied when $y=tv$ with $t$ being sufficiently large positive number and $z=0$, because $\ld 2 f(\bar x;0;0)\le 0$ and $f(x)-f(\bar x)>-\infty$. Therefore, $\ld 1 f(\bar x;u)\ge 0$ for all $u\in\E$.

Third, we prove that 
\begin{equation}\label{12}
\ld 2 f(\bar x;0;u)\ge 0,\quad\forall u\in\E.
\end{equation}
Suppose the contrary that there exists $v\in\E$ with $\ld 2 f(\bar x;0;v)<0$. Then (\ref{10}) cannot be satisfied for all points $y=0$, $z=tv$, where $t$ is a sufficiently large positive number,  because the lower Hadamard directional derivative of second order is positively homogeneous of second degree with respect to the direction. Indeed, $\ld 1 f(\bar x;0)=0$, thanks to (\ref{11}), and $f(x)-f(\bar x)>-\infty$.

The following is the last part of the proof.   It follows from (\ref{11}) and (\ref{12}) that $\bar x$ is a second-order stationary point. According to the hypothesis $\bar x$ is a global minimizer, which is impossible, because $f(x)<f(\bar x)$. 
\end{proof}

In the next claim we show that the class of second-order invex functions includes all invex ones in terms of the lower Hadamard directional derivative  functions.

\begin{proposition}
Let $\f$ be an invex function. Then $f$ is second-order invex.
\end{proposition}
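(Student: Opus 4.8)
The plan is to reduce second-order invexity to the first-order invexity hypothesis by loading all of the work onto $\eta_1$ and neutralizing the second-order term through the choice $\eta_2=0$. Fix $\bar x\in\dom f$ with $0\in\lsubd 1 f(\bar x)$ and let $x\in\E$ be arbitrary; this is precisely the situation in which the derivative $\ld 2 f(\bar x;0;u)$ is legitimately defined according to Definition \ref{df-2had}, since $x^*_1=0\in\lsubd 1 f(\bar x)$. Let $\eta$ be a map witnessing the invexity of $f$ in the sense of (\ref{13}). I would then set $\eta_1(\bar x,x):=\eta(\bar x,x)$ and $\eta_2(\bar x,x):=0$, so that the required inequality (\ref{8}) becomes
\[
f(x)-f(\bar x)\ge\ld 1 f(\bar x;\eta(\bar x,x))+\ld 2 f(\bar x;0;0).
\]

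The first term on the right is handled directly by invexity: applying (\ref{13}) with base point $\bar x$ and comparison point $x$ gives $f(x)-f(\bar x)\ge\ld 1 f(\bar x;\eta(\bar x,x))$. It therefore suffices to check that the added second-order term is harmless, namely that $\ld 2 f(\bar x;0;0)\le 0$. This is the only computation in the argument, and it is immediate from the trivial test direction $u^\pr\equiv 0$: since $f(\bar x+t\cdot 0)-f(\bar x)=0$ for every $t>0$, the difference quotient $2t^{-2}[f(\bar x+tu^\pr)-f(\bar x)]$ vanishes along $u^\pr\equiv 0$, whence its lower limit as $t\downarrow 0$, $u^\pr\to 0$ is at most $0$. (This is exactly the estimate already used in the proof of Theorem \ref{th2}.) Combining the two facts, the right-hand side above is bounded by $\ld 1 f(\bar x;\eta(\bar x,x))\le f(x)-f(\bar x)$, which is the desired inequality (\ref{8}).

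A small bookkeeping point I would address is the possibility of an $\infty-\infty$ ambiguity in the sum on the right. Because $0\in\lsubd 1 f(\bar x)$, we have $\ld 1 f(\bar x;\eta(\bar x,x))\ge 0$, and invexity bounds it above by $f(x)-f(\bar x)$; for $x\in\dom f$ this term is thus finite, while the second-order term is at most $0$ (possibly $-\infty$), so the sum is unambiguous. For $x\notin\dom f$ the left-hand side is $+\infty$ and (\ref{8}) holds trivially. An equivalent and perhaps more transparent route is to first observe that invexity together with $0\in\lsubd 1 f(\bar x)$ forces $\bar x$ to be a global minimizer---indeed $f(y)-f(\bar x)\ge\ld 1 f(\bar x;\eta(\bar x,y))\ge 0$ for all $y$---so that $f(x)-f(\bar x)\ge 0$, after which the choice $\eta_1=\eta_2=0$ makes the right-hand side nonpositive.

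I do not expect a genuine obstacle here. The entire content is the recognition that first-order invexity already dominates $f(x)-f(\bar x)$ and that the second-order Hadamard derivative in the direction $0$ is nonpositive, so an extra second-order summand can only help. The single step requiring a line of care is the sign estimate $\ld 2 f(\bar x;0;0)\le 0$, which the trivial direction settles at once.
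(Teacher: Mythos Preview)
Your proof is correct and follows exactly the paper's approach: take $\eta_1=\eta$ from invexity and $\eta_2=0$, then use the trivial direction $u'=0$ to get $\ld 2 f(\bar x;0;0)\le 0$. The additional bookkeeping about $\infty-\infty$ and the alternative ``global minimizer'' route are reasonable elaborations, though the paper does not bother with them; one small slip is that the estimate $\ld 2 f(\bar x;0;0)\le 0$ appears in the proof of the theorem on second-order invexity characterizing global minimizers, not in Theorem~\ref{th2}.
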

\begin{proof}
It follows from Equation (\ref{8})  that $f$ is second-order invex keeping the same map $\eta_1$ and taking $\eta_2=0$, because
$\ld 2 f(\bar x;0;0)\le 0$.
\end{proof}

The converse claim is not satisfied. There are a lot of second-order invex functions, which are not invex. The following example is extremely simple.
\begin{example}
Consider the function $f:\R^2\to\R$ defined by 
\[
f(x_1,x_2)=-x_1^2-x_2^2.
\]
We have $\ld 1 f(x;u)=-2x_1 u_1-2x_2 u_2$, where $u=(u_1,u_2)$ is a direction. Its only stationary point is $\bar x=(0,0)$. This point is not a global minimizer. Therefore, the function is not invex. We have $(0,0)\in\lsubd 1 f(\bar x)$ and $\ld 2 f(\bar x;0;u)=-2u_1^2-2u_2^2$. It follows from here that $f$ has no second-order stationary points. Hence, every second-order stationary point is a global minimizer, and the function is second-order invex.
\end{example}

\section{Strongly pseudoconvex functions and second-order isolated minimizers}
\label{s6}
Strongly pseudoconvex functions were introduced by Diewert, Avriel and Zang \cite{d-a-z81}. 
Their definition assumes additionally strict pseudoconvexity. It was proved by Hadjisavvas and Schaible \cite{had93} that in the differentiable case, strict pseudoconvexity of the function is superfluous; in other words each function, which satisfies the next definition is strictly pseudoconvex.

\begin{definition}[\cite{had93}]\label{def2} 
Let $S$ be an open convex subset of $\E$. A Fr\'echet differentiable function $f:S\to\R$ is said to be strongly pseudoconvex iff, for all $x\in S$, $u\in\E$ such that $\norm{u}=1$ and $\nabla f(x)(u)=0$, there exist positive numbers $\delta$ and $\alpha$ with $x+\delta u\in S$ and
\[
f(x+t u)\ge f(x)+\alpha t^2,\quad 0\le t<\delta.
\]
\end{definition}

In this section, we derive optimality conditions for an isolated minimum of order two of a function, which satisfies the strong pseudoconvexity at some point only.
We consider the definition of a strongly pseudoconvex function in terms of the lower Dini directional derivative. 

\begin{definition}\label{df-spc}
We call a function $f:\E\to\R$ strongly pseudoconvex at the point $x\in\dom f$ iff $\dini f(x;u)=0$, $u\in\E$, $\norm{u}=1$ implies that there exist positive
numbers $\delta$ and $\alpha$ with
\[
f(x+t u)\ge f(x)+\alpha t^2,\quad\forall t\in(0,\delta).
\]
\end{definition}



\begin{definition}
The first- and second-order lower Dini directional derivatives of a function $\f$ at the point $x\in\dom f$ in direction $u\in\E$ are defined as follows:
\[
\dini f(x;u)=\liminf_{t\downarrow 0}\, t^{-1}[f(x+tu)-f(x)].
\]
\[
\secdini f(x;u)=\liminf_{t\downarrow 0}\, 2t^{-2}[f(x+tu)-f(x)-t\dini f(x,u)].
\]
\end{definition}

The following notion extends the Lipschitz continuity of the gradient.

\begin{definition}[\cite{pas08}]
A function $f:\E\to\R$ is called $\ell$-stable at the point $x\in\E$ iff there exist a neigbourhood $U$ of $x$ and a constant $K>0$ such that 
\[
|\dini f(y;u)-\dini f(x;u)|\le K\,\norm{y-x}\,\norm{u},\quad\forall y\in U,\;\forall u\in\E.
\]
\end{definition}

\begin{proposition}[\cite{pas08}]\label{pr1}
Let the function $f:\E\to\R$ be continuous on some neighborhood of $x\in\E$ and $\ell$-stable at $x$. Then $f$ is strictly differentiable at $x$, hence Fr\'echet differentiable at $x$.
\end{proposition}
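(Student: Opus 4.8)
The plan is to take $A(u):=\dini f(x;u)$ as the candidate derivative, reduce everything to the one–dimensional sections $t\mapsto f(x+tu)$, prove a uniform quadratic Taylor estimate along each of them out of $\ell$-stability, and then recover linearity of $A$ and strict differentiability by a ``moving base point'' comparison.

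First I would isolate a one–variable lemma. Fix $u$ with $\norm{u}=1$ and set $g(s)=f(x+su)$, which is continuous for $s$ near $0$. The lower right Dini derivative of $g$ at $s$ is exactly $\dini f(x+su;u)$, so $\ell$-stability gives the two–sided bound $\dini f(x;u)-K|s|\le \dini f(x+su;u)\le \dini f(x;u)+K|s|$. Writing $c:=\dini f(x;u)$, I would deduce
\[
|f(x+tu)-f(x)-c\,t|\le \tfrac12 K\,t^2,\qquad 0\le t<\delta .
\]
The lower bound is routine, since $\dini f(x+su;u)\ge c-Ks$ forces $g(t)-g(0)\ge\int_0^t(c-Ks)\,ds$ by the monotonicity theorem for Dini derivatives. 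The upper bound is the delicate step, and this is where I expect the main obstacle: a bound on the \emph{lower} right Dini derivative \emph{from above} still controls the increment from above, but only after applying the monotonicity theorem (a continuous function with nonnegative upper right Dini derivative is nondecreasing) to the auxiliary function $s\mapsto \int_0^s(c+K\sigma)\,d\sigma-g(s)$, whose upper right Dini derivative equals $c+Ks-\dini f(x+su;u)\ge0$. This is exactly what rules out ``downward corners,'' and it is the only place where the continuity hypothesis on $f$ is genuinely used.

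Granting this estimate at $x$ and, by the same computation, at every base point $y$ near $x$ — with the uniform remainder $|f(y+\rho w)-f(y)-\dini f(y;w)\,\rho|\le K(2\norm{y-x}\rho+\tfrac12\rho^2)$ for $\norm{w}=1$, coming from $\ell$-stability centered at $x$ — the linearity of $A$ follows by matching. Positive homogeneity is automatic. For oddness I would center the estimate at $y=x+tu$ in direction $-u$ over distance $t$: then $y-tu=x$, so $f(x)=f(y)+\dini f(y;-u)\,t+O(t^2)$; substituting $f(y)=f(x)+A(u)\,t+O(t^2)$, dividing by $t$, and letting $t\downarrow0$ (using $\dini f(x+tu;-u)\to\dini f(x;-u)$ from $\ell$-stability) yields $A(u)+A(-u)=0$. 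For additivity I would split $f(x+t(u+v))-f(x)$ across the intermediate point $x+tu$, evaluate the two increments by the estimate at $x$ and at $x+tu$, and again compare the coefficients of $t$. Since $\E$ is finite–dimensional, the resulting additive and homogeneous $A$ is automatically a continuous linear functional, i.e. $A\in L^1(\E)$.

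Finally I would assemble strict differentiability. Combining the uniform remainder above with $|\dini f(y;w)-A(w)|\le K\norm{y-x}$ (again $\ell$-stability) gives, for $\norm{w}=1$,
\[
\frac{f(y+\rho w)-f(y)}{\rho}=A(w)+O(\norm{y-x}+\rho),
\]
which tends to $A(w)$ as $y\to x$ and $\rho\downarrow0$; this is strict differentiability at $x$, and taking $y=x$ already produces Fréchet differentiability with a quadratic remainder. The only genuinely nontrivial ingredient in the whole argument is the one–variable upper estimate of the second paragraph; everything afterwards is bookkeeping with $\ell$-stability and the triangle inequality.
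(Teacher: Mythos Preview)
The paper does not prove this proposition; it is quoted from \cite{pas08} without argument, so there is no in-paper proof to compare against. Your proposal must therefore be judged on its own merits.

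Your argument is sound. The key one-variable step --- turning the two-sided bound $|\dini f(x+su;u)-c|\le K|s|$ into the quadratic remainder $|g(t)-g(0)-ct|\le \tfrac12 Kt^{2}$ via the monotonicity theorem for Dini derivatives --- is correct, and you are right that the upper estimate (applied to $s\mapsto\int_0^s(c+K\sigma)\,d\sigma-g(s)$ through its \emph{upper} right Dini derivative) is the only place where continuity of $f$ is genuinely needed, since the monotonicity theorem fails without it. The extension to a moving base point $y$ near $x$, with remainder $K(2\norm{y-x}\rho+\tfrac12\rho^{2})$, follows exactly as you describe by re-centering the $\ell$-stability inequality at $x$; the oddness and additivity of $A(\cdot)=\dini f(x;\cdot)$ then drop out by matching first-order coefficients, and the final strict-differentiability estimate is routine bookkeeping. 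One small point worth making explicit: in the strict-differentiability step you implicitly need the entire segment $\{y+sw:0\le s\le\rho\}$ to lie in the neighborhood $U$ on which both continuity and $\ell$-stability hold, which is automatic once $y$ is close enough to $x$ and $\rho$ is small. With that caveat the argument is complete.
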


The following mean-value theorem is due to Diewert \cite{die81}.
\begin{lemma}\label{mv}
Let $\ph:[a,\,b]\to\R$ be a lower semicontinuous function of one real variable.
Then there exists $\xi$, $a<\xi\le b$, such that 
\[
\ph(a)-\ph(b)\le \dini \ph(\xi; a-b).
\]
\end{lemma}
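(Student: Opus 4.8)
The plan is to reduce the statement to a Rolle--type situation by subtracting off the linear function through the two endpoints, and then to extract the point $\xi$ from a minimizer of the resulting function. Concretely, I would introduce the auxiliary function
\[
\psi(x)=\ph(x)-\frac{\ph(b)-\ph(a)}{b-a}\,(x-a),
\]
which is lower semicontinuous on $[a,b]$ (the subtracted term is continuous) and satisfies $\psi(a)=\psi(b)$. A direct computation of the difference quotient then gives, for every $\xi\in(a,b]$, the clean relation
\[
\dini\psi(\xi;a-b)=\dini\ph(\xi;a-b)-(\ph(a)-\ph(b)),
\]
since along the direction $a-b$ the linear term shifts each difference quotient by the constant $\frac{\ph(b)-\ph(a)}{b-a}(a-b)=\ph(a)-\ph(b)$, and this constant survives the $\liminf$. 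Hence the desired inequality $\ph(a)-\ph(b)\le\dini\ph(\xi;a-b)$ is equivalent to $\dini\psi(\xi;a-b)\ge 0$, and it suffices to produce a point $\xi\in(a,b]$ at which the latter holds.

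Next I would exploit lower semicontinuity: since $\psi$ is lower semicontinuous on the compact interval $[a,b]$, it attains its minimum. The point to be careful about is that a minimizer can always be chosen in the half-open interval $(a,b]$. Indeed, if the minimum value lies strictly below $\psi(a)$, then neither endpoint is a minimizer, so every minimizer lies in $(a,b)$; and if the minimum value equals $\psi(a)=\psi(b)$, then $b$ itself is a minimizer and $b>a$. In either case there is a minimizer $\xi$ with $\xi>a$.

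Finally I would read off the sign of the Dini derivative at such a minimizer. Because $a-b<0$, for small $t>0$ the point $\xi+t(a-b)$ lies to the left of $\xi$, and as $\xi>a$ it still belongs to $[a,b]$; minimality of $\xi$ then forces $\psi(\xi+t(a-b))\ge\psi(\xi)$, so every quotient $t^{-1}[\psi(\xi+t(a-b))-\psi(\xi)]$ is nonnegative and therefore $\dini\psi(\xi;a-b)\ge 0$. Combined with the equivalence above, this yields $\ph(a)-\ph(b)\le\dini\ph(\xi;a-b)$ with $\xi\in(a,b]$, as required.

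I expect the only genuinely delicate step to be guaranteeing that the minimizer can be taken in $(a,b]$ rather than being stuck at the left endpoint $a$; this is precisely where the boundary equality $\psi(a)=\psi(b)$ is needed, and it is also the reason the conclusion asserts $a<\xi\le b$ and not merely $a\le\xi\le b$. The remaining ingredients---lower semicontinuity furnishing a minimizer on the compact interval, and one-sided minimality forcing the left difference quotients to be nonnegative---are routine.
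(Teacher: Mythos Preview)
Your argument is correct. The paper does not supply its own proof of this lemma; it merely attributes the result to Diewert and uses it as a tool in the proof of Lemma~\ref{lema4}. Your proof is the standard one for Dini-type mean-value inequalities: subtract the secant line to normalize the endpoints, use lower semicontinuity on the compact interval to obtain a minimizer, exploit the equality $\psi(a)=\psi(b)$ to force a minimizer into $(a,b]$, and read off the nonnegativity of the left Dini derivative from minimality. Each step checks out, including the computation $\dini\psi(\xi;a-b)=\dini\ph(\xi;a-b)-(\ph(a)-\ph(b))$ and the case split ensuring $\xi>a$.
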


\begin{lemma}\label{lema4} 
Let $f:\E\to\R$ be radially lower semicontinuous on some neighborhood of $x\in\E$ and $l$-stable at $x$. Suppose that $\dini f(x;u)=0$ for all $u\in\E$.  Then the following limit exists and it equals $0$:
\begin{equation}\label{40}
\lim_{t\downarrow 0,h^\pr\to h}\; [f(x+t h^\pr)-f(x+th)]/t^2=0.
\end{equation}
\end{lemma}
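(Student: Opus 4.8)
The plan is to squeeze the quotient $[f(x+th')-f(x+th)]/t^2$ between two quantities that both vanish as $t\downarrow 0$ and $h'\to h$. Let $U$ and $K$ be the neighborhood and constant furnished by $\ell$-stability, and let $V\subseteq U$ be a neighborhood on which $f$ is radially lower semicontinuous. For $t$ small and $h'$ close to $h$ the entire chord joining $x+th$ and $x+th'$ stays inside $V$, so every restriction of $f$ to a subsegment of that chord is lower semicontinuous and Lemma \ref{mv} is applicable; simultaneously all points on the chord lie in $U$, so $\ell$-stability may be invoked at any of them.

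To bound $f(x+th)-f(x+th')$ from above, I would apply Diewert's mean value theorem to the one-variable function $\varphi(\lambda):=f(x+th+\lambda t(h'-h))$ on $[0,1]$. This yields some $\xi\in(0,1]$ with $\varphi(0)-\varphi(1)\le\dini\varphi(\xi;-1)$. Writing $y_\xi:=x+th+\xi t(h'-h)$, unwinding the definition of the Dini derivative of $\varphi$ shows $\dini\varphi(\xi;-1)=\dini f(y_\xi;-t(h'-h))$, and positive homogeneity of $\dini f$ in the direction gives $\dini f(y_\xi;-t(h'-h))=t\,\dini f(y_\xi;h-h')$, so
\[
f(x+th)-f(x+th')\le t\,\dini f(y_\xi;\,h-h').
\]
Now the hypothesis $\dini f(x;h-h')=0$ combined with $\ell$-stability gives $|\dini f(y_\xi;h-h')|\le K\norm{y_\xi-x}\,\norm{h-h'}$, and since $\norm{y_\xi-x}\le t(\norm{h}+\norm{h'-h})$ this produces the quadratic estimate
\[
\frac{f(x+th)-f(x+th')}{t^2}\le K\,(\norm{h}+\norm{h'-h})\,\norm{h-h'},
\]
whose right-hand side tends to $0$ as $h'\to h$.

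Interchanging the roles of $h$ and $h'$, i.e.\ applying Lemma \ref{mv} to $\psi(\lambda):=f(x+th'+\lambda t(h-h'))$, yields by the identical computation the symmetric bound $[f(x+th')-f(x+th)]/t^2\le K(\norm{h'}+\norm{h-h'})\norm{h'-h}$, which again tends to $0$. Together these two estimates force $\limsup\le 0$ for both $+[f(x+th')-f(x+th)]/t^2$ and its negative, so the joint limit exists and equals $0$, as claimed. I expect the main obstacle to be purely bookkeeping: correctly translating the one-variable derivative $\dini\varphi(\xi;-1)$ into the directional derivative $\dini f(y_\xi;\cdot)$ (tracking the sign and the factor $t$ from positive homogeneity), and verifying that the points $y_\xi$ at which $\ell$-stability is used stay in $U$ uniformly as $t\downarrow 0$ and $h'\to h$. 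Once those are in place, the assumption $\dini f(x;\cdot)\equiv 0$ is precisely what converts the $\ell$-stability Lipschitz estimate into the $O(t^2)$ bound driving the whole argument.
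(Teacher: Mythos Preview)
Your proof is correct and follows essentially the same approach as the paper: apply Diewert's mean-value theorem along the segment joining $x+th$ and $x+th'$, use $\ell$-stability together with the hypothesis $\dini f(x;\cdot)\equiv 0$ to obtain an $O(t^2\norm{h'-h})$ bound, and then repeat with the roles of $h$ and $h'$ interchanged to squeeze the quotient to $0$. The only cosmetic differences are that the paper writes the mean-value inequality directly in terms of $f$ (without naming the auxiliary one-variable function $\varphi$) and performs the two bounds in the opposite order.
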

\begin{proof}
By Diewert's mean-value theorem, there exists $\theta\in [0,1)$ such that
\[
f(x+t h^\pr)-f(x+th)\le \dini f(x+th+t\theta(h^\pr-h);t(h^\pr-h))
\]
\[
=t[\dini f(x+th+t\theta(h^\pr-h);h^\pr-h)-\dini f(x;h^\pr-h)].
\]
Since $f$ is $l$-stable there exists $K>0$ with
\[
\dini f(x+th+t\theta(h^\pr-h);h^\pr-h)-\dini f(x;h^\pr-h)\le tK\norm{h+\theta(h^\pr-h)}\norm{h^\pr-h}.
\]
Therefore 
\begin{equation}\label{30}
\limsup_{t\downarrow 0,h\pr\to h}\; [f(x+t h^\pr)-f(x+th)]/t^2\le 0.
\end{equation}

On the other hand, by mean-value theorem, there exists $\tau\in [0,1)$ such that
\[
f(x+th)-f(x+th^\pr)\le \dini f(x+th^\pr+t\tau(h-h^\pr);t(h-h^\pr))
\]
\[
=t[\dini f(x+th^\pr+t\tau(h-h^\pr);h-h^\pr)-\dini f(x;h-h^\pr)].
\]
Since $f$ is $l$-stable there exists $K>0$ with
\[
\dini f(x+th^\pr+t\tau(h-h^\pr);h-h^\pr)-\dini f(x;h-h^\pr)\le t K\norm{h^\pr+\tau(h-h^\pr)}\norm{h^\pr-h}.
\]
Therefore 
\begin{equation}\label{31}
\liminf_{t\downarrow 0,h^\pr\to h}\; [f(x+t h^\pr)-f(x+th)]/t^2\ge 0.
\end{equation}
Then the lemma follows from (\ref{30}) and (\ref{31}).
\end{proof}

In the next theorem, we derive necessary and sufficient conditions for an isolated local minimum of second-order of a strongly pseudoconvex function at some point $\bar x$:

\begin{theorem}\label{th3}
Let the function $f:\E\to\R$ be continuous on some neighborhood of $\bar x\in\E$ and $\ell$-stable at $\bar x$.
 Suppose that  $f$ is strongly pseudoconvex only at the point $\bar x$. 
Then $\bar x$ is an isolated local minimizer of second-order if and only if $\nabla f(\bar x)=0$.
\end{theorem}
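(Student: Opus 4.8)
The plan is to exploit that the hypotheses force $f$ to be Fr\'echet differentiable at $\bar x$, which reduces the statement to a comparison between the first-order derivative and the gradient, and then to use Lemma~\ref{lema4} as the device that upgrades the merely directionwise quadratic growth coming from strong pseudoconvexity into the uniform quadratic estimate defining an isolated minimizer of second order. First I would record the preliminary facts. Since $f$ is continuous near $\bar x$ and $\ell$-stable at $\bar x$, Proposition~\ref{pr1} guarantees that $f$ is Fr\'echet differentiable at $\bar x$, so $\nabla f(\bar x)$ exists. Expanding the difference quotients with the Fr\'echet remainder, which is $o(t)$ uniformly as $u^\pr\to u$, then gives the identity
\[
\ld 1 f(\bar x;u)=\dini f(\bar x;u)=\nabla f(\bar x)(u)\qquad\text{for all }u\in\E .
\]

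For necessity, if $\bar x$ is an isolated local minimizer of second order, it is in particular a local minimizer, so Theorem~\ref{th1} yields $\ld 1 f(\bar x;u)\ge 0$ for every $u\in\E$; by the identity above this reads $\nabla f(\bar x)(u)\ge 0$ for all $u$, and replacing $u$ by $-u$ forces $\nabla f(\bar x)=0$. (Equivalently, Fermat's rule for the differentiable function $f$ gives the same conclusion.)

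For sufficiency I would argue by contradiction in the spirit of the implication c)$\Rightarrow$a) of Theorem~\ref{th2}. Assume $\nabla f(\bar x)=0$ but $\bar x$ is not an isolated local minimizer of second order. Choosing $\varepsilon_k\downarrow 0$, there are points $x_k\ne\bar x$, $x_k\to\bar x$, with $f(x_k)<f(\bar x)+\varepsilon_k\norm{x_k-\bar x}^2$. Put $t_k=\norm{x_k-\bar x}$ and $d_k=(x_k-\bar x)/t_k$, and pass to a subsequence with $d_k\to d$, $\norm d=1$. Since $\nabla f(\bar x)=0$ gives $\dini f(\bar x;d)=0$ with $\norm d=1$, strong pseudoconvexity at $\bar x$ (Definition~\ref{df-spc}) produces $\delta,\alpha>0$ with $f(\bar x+t d)\ge f(\bar x)+\alpha t^2$ for $t\in(0,\delta)$, hence $f(\bar x+t_k d)\ge f(\bar x)+\alpha t_k^2$ for large $k$. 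Subtracting the two estimates gives
\[
\frac{f(\bar x+t_k d)-f(\bar x+t_k d_k)}{t_k^2}>\alpha-\varepsilon_k\longrightarrow\alpha>0 .
\]
On the other hand $\nabla f(\bar x)=0$ makes $\dini f(\bar x;u)=0$ for every $u$, so all hypotheses of Lemma~\ref{lema4} hold (continuity being stronger than radial lower semicontinuity), and evaluating its limit along $t_k\downarrow 0$, $d_k\to d$ forces the same ratio to tend to $0$. This contradiction completes the argument.

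The main obstacle, and the reason Lemma~\ref{lema4} is indispensable, is the passage from a directionwise to a uniform estimate: strong pseudoconvexity only supplies constants $\alpha=\alpha(d)$ and $\delta=\delta(d)$ attached to the single limiting direction $d$, whereas an isolated minimizer of second order requires growth valid along every nearby direction $d_k$ simultaneously. Lemma~\ref{lema4}, which rests on Diewert's mean value theorem and on $\ell$-stability, is precisely what controls how the second-order difference quotient changes when the direction is perturbed from $d$ to $d_k$, thereby letting the single constant $\alpha$ do the work along the whole sequence.
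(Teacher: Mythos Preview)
Your proof is correct and follows essentially the same approach as the paper: both use Proposition~\ref{pr1} to secure Fr\'echet differentiability, argue sufficiency by contradiction via a sequence $x_k$ with $t_k=\norm{x_k-\bar x}$ and $d_k\to d$, invoke strong pseudoconvexity along the limiting direction $d$, and then use Lemma~\ref{lema4} to control the difference $[f(\bar x+t_k d)-f(\bar x+t_k d_k)]/t_k^2$. The only cosmetic difference is that the paper phrases the contradiction as two incompatible bounds on $\liminf t_k^{-2}[f(\bar x+t_k d)-f(\bar x)]$, whereas you subtract the two estimates directly; the content is the same.
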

\begin{proof}
Let $\bar x$ be an isolated local minimizer of second-order.
We conclude from Proposition \ref{pr1} that $\nabla f(\bar x)$ exists. Then it is obvious that  $\nabla f(\bar x)=0$.

We prove the converse claim. Suppose that $\nabla f(\bar x)=0$, but
$\bar x$ is not an isolated local minimizer of second-order. Therefore,
for every sequence $\{\varepsilon_k\}_{k=1}^\infty$ of positive numbers
converging to zero, there exists a sequence $\{x_k\}$, $x_k\in\dom f$ such that inequalities (\ref{60}) hold.
It follows from here that $x_k\to \bar x$. Denote $t_k=\norm{x_k-\bar x}$,
$d_k=(x_k-\bar x)/t_k$.  Passing to a subsequence, we may suppose
that the sequence $\{d_k\}_{k=1}^\infty$ is convergent and $d_k\to  d$,  where $\norm{d}=1$.
Therefore
\begin{equation}\label{25}
\liminf_{k\to\infty}\, t^{-2}_k
[f(\bar x+t_k d_k)-f(\bar x)]=\liminf_{k\to \infty}\, t^{-2}_k[f(x_k)-f(\bar x)]\le\lim_{k\to \infty}\, \varepsilon_k=0.
\end{equation}
We have
\[
f(\bar x+t_k d)-f(\bar x)=[f(\bar x+t_k d)-f(\bar x+t_kd_k)]+[f(\bar x+t_kd_k)-f(\bar x)].
\]
It follows from Lemma \ref{lema4} and (\ref{25}) that
\[
\liminf_{k\to\infty}\, t^{-2}_k [f(\bar x+t_k d)-f(\bar x)]\le 0.
\]
On the other hand, according to Definition \ref{df-spc}
\[
f(\bar x+t_k d)\ge f(\bar x)+\alpha t_k^2
\]
for all sufficiently large $k$. Hence,
\[
\liminf_{k\to\infty}\, t^{-2}_k
[f(\bar x+t_k d)-f(\bar x)]\ge\liminf_{k\to\infty}\,\alpha=\alpha>0,
\]
which is a contradiction.
\end{proof}

The following example shows that Theorem \ref{th3} is not true for functions, which are not $\ell$-stable:

\begin{example}\label{ex1}
Consider the function 
\[
f=|\,x_2-\sqrt[3]{x_1^4}\,|^{\, 3/2}.
\]
Of course, the point $\bar x=(0,0)$ is a local and global minimizer, but it is not an isolated local minimizer of order two. Even it is not a strict local minimizer, because $f(x)=0$ for all $x=(x_1,x_2)$ over the curve $x_2=x_1^{4/3}$. We have $\nabla f(\bar x)=(0,0)$.
Simple calculations show that this function is strongly pseudoconvex in the sense of Definition \ref{df-spc} at $\bar x$.  Let $v=(v_1,v_2)$ be an arbitrary vector, whose norm is $1$. If $v_2>0$ or $v_2<0$, then 
\[
\lim_{t\downarrow 0}\,[f(\bar x+tv)-f(\bar x)]/t^2=\lim_{t\downarrow 0}\, f(tv)/t^2=+\infty.
\]
 If $v_2=0$, then $v_1=\pm 1$ and 
\[
\lim_{t\downarrow 0}\,[f(\bar x+tv)-f(\bar x)]/t^2=\lim_{t\downarrow 0}\, f(tv)/t^2=1.
\]
Therefore, for every $v\in\R^2$ there exists $\delta>0$ and $C>0$ such that 
\[
f(tv)>Ct^2 \quad\textrm{for all}\quad t\in(0,\delta).
\]
The sufficient conditions of Theorem \ref{th3} are not satisfied, because $f$ is not $\ell$-stable at $\bar x$. Indeed,
if we take
$x^\pr_k=(0,k^{-1})$, then $\nabla f(x_k)=(0,3/2k^{-1/2})$ and there do not exist $K>0$ such that 
\[
\norm{\nabla f(x^\pr_k)-\nabla f(\bar x)}\le K\norm{x^{\pr}_k-\bar x}
\]
for all sufficiently large integers $k$. We have
\[
\lim_{k\to+\infty}\norm{\nabla f(x^{\pr}_k)}\, / \,
\norm{x^{\pr}_k}=+\infty.
\]
\end{example}

We adopt the definition of a strongly pseudoconvex functions to proper extended real functions in terms of the lower Hadamard derivative and derive optimality conditions for such functions:
\begin{definition}
We call a proper extended real function  $\f$ 
strongly pseudoconvex at $x\in\dom f$ iff, for every $d\in\E$ such that
$\norm{d}=1$ and $\ld 1 f(\bar x;d)=0$,
there exist positive numbers $\eps$, $\delta$, and $\alpha$ such that
\begin{equation}\label{45}
f(\bar x+td^\pr)\ge f(\bar x)+\alpha t^2,
\end{equation}
for all $t\in\R$ and $d^\pr\in\E$, $\|d^\prime\|=1$ with $0\le t<\delta$, $\|d^\pr-d\|<\varepsilon$.
\end{definition}

\begin{theorem}\label{th5}
Let $\f$ be a strongly pseudoconvex function at $\bar x$.
Then $\bar x$ is an isolated local minimizer of second-order if and only if and only if $0\in\lsubd 1 f(\bar x)$. 
\end{theorem}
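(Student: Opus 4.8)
The plan is to deduce both directions from Theorem \ref{th2}, since strong pseudoconvexity is designed exactly to force strict positivity of the second-order derivative along the directions that annihilate $\ld 1 f(\bar x;\cdot)$. For the necessity no pseudoconvexity is needed: an isolated local minimizer of second-order is in particular a local minimizer, so Theorem \ref{th1} (equivalently, the implication a) $\Rightarrow$ b) of Theorem \ref{th2}) gives $\ld 1 f(\bar x;u)\ge 0$ for all $u\in\E$, which is precisely $0\in\lsubd 1 f(\bar x)$.

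For the sufficiency I would assume $0\in\lsubd 1 f(\bar x)$, that is $\ld 1 f(\bar x;u)\ge 0$ for every $u$, and check conditions (\ref{4}) and (\ref{5}) of Theorem \ref{th2}; the implication c) $\Rightarrow$ a) then finishes the proof. Condition (\ref{4}) is the assumption itself. To establish (\ref{5}), fix $u\ne 0$ with $\ld 1 f(\bar x;u)=0$ and put $d=u/\norm{u}$. Positive homogeneity of degree one of the lower Hadamard derivative gives $\ld 1 f(\bar x;d)=0$ with $\norm{d}=1$, so $d$ activates the strong pseudoconvexity inequality (\ref{45}) for some $\eps,\delta,\alpha>0$.

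The decisive step is a rescaling inside the liminf defining $\ld 2 f(\bar x;0;u)$. Because $0\in\lsubd 1 f(\bar x)$, this derivative equals $\liminf_{t\downarrow 0,\,u'\to u}2t^{-2}[f(\bar x+tu')-f(\bar x)]$. For $t\downarrow 0$ and $u'\to u$ I would set $s=t\norm{u'}$ and $d'=u'/\norm{u'}$, so that $tu'=sd'$, $\norm{d'}=1$, $s\downarrow 0$ and $d'\to d$. Thus for all sufficiently small $t$ and all $u'$ close enough to $u$ the pair $(s,d')$ meets the requirements $0\le s<\delta$ and $\norm{d'-d}<\eps$ of (\ref{45}), whence $f(\bar x+tu')=f(\bar x+sd')\ge f(\bar x)+\alpha s^2=f(\bar x)+\alpha t^2\norm{u'}^2$. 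Multiplying by $2t^{-2}$ and passing to the liminf yields $\ld 2 f(\bar x;0;u)\ge 2\alpha\norm{u}^2>0$, which is exactly (\ref{5}).

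The main obstacle I anticipate lies in this normalization: strong pseudoconvexity is stated only for unit directions, whereas the second-order Hadamard derivative sweeps over arbitrary $u'\to u$, so one must verify that $d'=u'/\norm{u'}$ stays within the $\eps$-ball around $d$ and that $s=t\norm{u'}$ stays below $\delta$ uniformly while the double limit is taken (this uses $u\ne 0$, which keeps the normalization continuous near $u$). Once this is checked the estimate is robust, and the rest is routine bookkeeping resting on Theorem \ref{th2}.
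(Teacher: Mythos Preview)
Your proof is correct. The approach is close to the paper's but packaged differently: the paper argues by contradiction, essentially re-running the implication c) $\Rightarrow$ a) of Theorem~\ref{th2} by hand (extracting a sequence $x_k$, normalizing to a unit direction $d$, and showing that strong pseudoconvexity forces $\ld 2 f(\bar x;0;d)\ge 2\alpha>0$, contradicting the upper bound $\le 0$ coming from the sequence), whereas you verify hypothesis (\ref{5}) directly and then invoke Theorem~\ref{th2}. Your route requires the extra normalization step $u'\mapsto d'=u'/\norm{u'}$ because you must handle an arbitrary $u\ne 0$, while the paper's contradiction argument automatically produces a unit direction $d$ and so applies (\ref{45}) without rescaling. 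The trade-off is that your version is cleaner in its reliance on Theorem~\ref{th2} and makes explicit that strong pseudoconvexity is exactly what upgrades (\ref{4}) to (\ref{5}); the paper's version avoids the normalization bookkeeping at the cost of repeating the sequence-extraction machinery already established in Theorem~\ref{th2}.
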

\begin{proof}
It follows from Theorem \ref{th1} that $\bar x$ is an isolated local minimizer
of second-order implies 
\[
0\in\lsubd 1 f(\bar x)\quad\textrm{and}\quad  \ld 2 f (\bar x;0;u)\ge 0,\;\forall u\in\E.
\] 

Suppose that $0\in\lsubd 1 f(\bar x)$. We prove that $\bar x$ is an isolated minimizer of second-order.
Assume the contrary.  Therefore, for every sequence $\{\varepsilon_k\}_{k=1}^\infty$ of positive numbers
converging to zero, there exists a sequence $\{x_k\}$, $x_k\in\dom f$ such that inequalities (\ref{60}) hold and $x_k\to\bar x$. Denote $t_k=\norm{x_k-\bar x}$,
$d_k=(x_k-\bar x)/t_k$. Without loss of generality  we may suppose
that $d_k\to  d$,  where $\norm{d}=1$. Therefore
\[
\ld 1 f (\bar x;d)\le\liminf_{k\to\infty }\, t^{-1}_k[f(\bar x+t_k d_k)-f(\bar x)]
\le\liminf_{k\to\infty}\,\varepsilon_k t_k=0.
\]
By the assumption $0\in\lsubd 1 f(\bar x)$, we conclude from here  that 
$\ld 1 f(\bar x;d)=0$. On the other hand, by strong pseudoconvexity we obtain that
there exist positive numbers $\eps$, $\delta$, and $\alpha$ such that Inequality (\ref{45}) is satisfied.
for all $t\in\R$ and $d^\pr\in\E$ with $\|d^\prime\|=1$, $0\le t<\delta$, $\norm{d^\pr-d}<\varepsilon$. Hence,
\[
\ld 2 f (\bar x;0;d)=\liminf_{t\downarrow 0,d^\pr\to d} 2t^{-2} [f(\bar x+t d^\pr)-f(\bar x)]\ge\liminf_ {t\downarrow 0,d^\pr\to d}2\alpha=2\alpha>0,
\]
which is a contradiction.
\end{proof}

\section{Necessary and sufficient conditions for cone-constrained vector problems}

Consider the multiobjective nonlinear programming problem

\bigskip
$C$-minimize $f(x)$ subject to $g(x)\in -K$,\hfill (P)
\bigskip

\noindent
where $f:X\to\R^n$ and $g:X\to\R^m$ are given vector-valued functions, defined on some open set $X\subset\R^s$, $C$ and $K$ are given closed convex cones with a vertex at the origin of the respective space. We suppose that $C\in\R^n$  has nonempty interior ${\rm int}(C)$.  Denote by $S$ the feasible set, that is 
\[
S:=\{x\in X\mid g(x)\in -K\}.
\]

\begin{definition}
A feasible point $\bar x$ is called a weak local minimizer iff there exists a neighborhood $N\ni\bar x$ such that there is no another feasible point $x\in S\cap N$ with $f(x)\in f(\bar x)-{\rm int}(C)$.
\end{definition}

Denote by $a\cdot b$ the scalar product between the vectors $a\in\R^n$ and $b\in\R^n$. Denote the positive polar cone of $C$ by $C^*$, that is 
\[
C^*:=\{\lambda\in R^n\mid\lambda\cdot x\ge 0\textrm{ for all }x\in C\},
\]
and the positive polar cone of $C^*$ by $C^{**}$.  

We begin with some preliminary lemmas.

\begin{lemma}[\cite{ggt04}]\label{C**}
Let $C$ be a nonempty closed convex cone in the $n$-dimensional space $\R^n$, whose vertex is the origin. Then $C^{**}=C$.
\end{lemma}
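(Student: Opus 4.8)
The plan is to prove the two inclusions $C \subseteq C^{**}$ and $C^{**} \subseteq C$ separately, the first being elementary and holding for any set, the second being the substantive ``bipolar'' direction that genuinely uses closedness, convexity, and the cone property together with a separation argument.

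For the inclusion $C \subseteq C^{**}$, I would argue directly from the definitions: if $x \in C$, then for every $\lambda \in C^*$ we have $\lambda \cdot x \ge 0$ by the very definition of $C^*$, and since the scalar product is symmetric this says $x \cdot \lambda \ge 0$ for all $\lambda \in C^*$, which is exactly the condition defining $x \in C^{**}$. This direction requires no hypothesis on $C$ beyond its being a subset of $\R^n$.

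For the reverse inclusion I would proceed by contraposition. First I would record that $0 \in C$: since $C$ is nonempty, pick any $x_0 \in C$; the cone property gives $t x_0 \in C$ for all $t > 0$, and letting $t \downarrow 0$ the closedness of $C$ yields $0 \in C$. Now suppose, for contradiction, that some $y \in C^{**}$ does not lie in $C$. Because $C$ is closed and convex and $y \notin C$, the finite-dimensional strict separation theorem provides a vector $a \in \R^n$ and a scalar $\alpha$ with $a \cdot y < \alpha \le a \cdot x$ for all $x \in C$. Evaluating the right-hand inequality at $x = 0 \in C$ gives $\alpha \le 0$, hence $a \cdot y < 0$. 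The decisive step is to upgrade this separation into the membership $a \in C^*$: for any fixed $x \in C$ and any $t > 0$ the cone property gives $t x \in C$, so $t\,(a \cdot x) = a \cdot (t x) \ge \alpha$; were $a \cdot x < 0$, letting $t \to +\infty$ would force $t\,(a \cdot x) \to -\infty$, contradicting the lower bound $\alpha$. Thus $a \cdot x \ge 0$ for every $x \in C$, i.e. $a \in C^*$. But then the hypothesis $y \in C^{**}$ forces $y \cdot a \ge 0$, contradicting $a \cdot y < 0$. Hence no such $y$ exists and $C^{**} \subseteq C$, completing the proof.

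The main obstacle, and the only place where the cone structure is genuinely exploited, is this last upgrade from a separating hyperplane to an element of the polar cone $C^*$. Ordinary separation only yields an affine inequality $a \cdot x \ge \alpha$ on $C$; the positive homogeneity of the cone is what simultaneously forces the threshold to be nonnegative on all of $C$ (so that $a \in C^*$) and makes $\alpha \le 0$, supplying the strict sign $a \cdot y < 0$ needed to contradict $y \in C^{**}$. Everything else reduces to a direct application of the definitions and the standard separation theorem in $\R^n$.
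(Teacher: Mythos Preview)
Your argument is correct and is the standard proof of the bipolar theorem via strict separation in $\R^n$. Note, however, that the paper does not supply its own proof of this lemma: it is stated with a citation to \cite{ggt04} and used as a known result, so there is no in-paper proof to compare against. Your write-up would serve perfectly well as a self-contained proof should one be desired.
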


\begin{lemma}\label{lemaVP1}
Let $C$ be a closed convex cone, and $x\notin C$. Then there exists $\lambda\in C^*$ such that $\lambda\cdot x<0$.
\end{lemma}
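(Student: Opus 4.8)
The plan is to invoke the metric projection onto the closed convex cone $C$ and then exploit the homogeneity of the cone to sharpen the resulting variational inequality into membership in $C^*$. Since $C$ is a closed convex cone with vertex at the origin, it is nonempty (it contains $0$), so the nearest-point projection $p:=P_C(x)$ exists and is unique; because $x\notin C$ we have $p\ne x$, and I set $\lambda:=p-x\ne 0$.

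First I would record the variational inequality characterizing the projection onto a closed convex set, namely $(x-p)\cdot(y-p)\le 0$ for all $y\in C$, which rewrites as
\[
\lambda\cdot y\ge\lambda\cdot p\qquad\text{for all }y\in C.
\]
Next I would use that $C$ is a cone: for every $y\in C$ and every $t>0$ the point $ty$ again lies in $C$, so $t\,(\lambda\cdot y)\ge\lambda\cdot p$ for all $t>0$. Letting $t\to+\infty$ forces $\lambda\cdot y\ge 0$, for otherwise the left-hand side tends to $-\infty$ against the constant right-hand side; hence $\lambda\in C^*$. In particular $p\in C$ gives $\lambda\cdot p\ge 0$, while applying the inequality with $y=p$ and letting $t\to 0^+$ gives $\lambda\cdot p\le 0$; therefore $\lambda\cdot p=0$.

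Finally I would compute $\lambda\cdot x=\lambda\cdot(p-\lambda)=\lambda\cdot p-\norm{\lambda}^2=-\norm{\lambda}^2<0$, using $\lambda\ne 0$. This simultaneously produces $\lambda\in C^*$ and $\lambda\cdot x<0$, as required.

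I do not expect a serious obstacle here; the one point that needs care is that a generic strong separation theorem only yields a separating threshold $\alpha$ with $\lambda\cdot x<\alpha\le\lambda\cdot y$ on $C$, whereas the statement demands the threshold to be exactly $0$. The cone structure is precisely what allows me to take $\alpha=0$: scaling points of $C$ shows the separating functional must be nonnegative on all of $C$ and must vanish at the projection $p$. An alternative route is to quote the strong separation theorem for a point and a disjoint closed convex set directly and then run the same homogeneity argument to replace $\alpha$ by $0$; the projection approach is preferable since it is self-contained and constructs $\lambda$ explicitly.
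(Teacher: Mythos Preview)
Your proof is correct, but it takes a different route from the paper. The paper argues by contradiction and invokes the bipolar theorem directly: if $\lambda\cdot x\ge 0$ for every $\lambda\in C^*$, then $x\in C^{**}$, and since $C$ is a closed convex cone one has $C^{**}=C$ (this is the paper's Lemma~\ref{C**}, quoted from an external reference), contradicting $x\notin C$. That is the entire argument.

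Your approach instead constructs $\lambda$ explicitly as $P_C(x)-x$ via the metric projection, and then uses the cone's homogeneity to push the separating threshold down to $0$. This is a genuinely different, more constructive path: the paper black-boxes the bipolar theorem, while you essentially re-derive the piece of it that is needed, producing an explicit separating functional as a by-product. The trade-off is length for self-containment; your argument would stand on its own without Lemma~\ref{C**}, whereas the paper's three-line proof does not. One small wording point: where you write ``applying the inequality with $y=p$ and letting $t\to 0^+$'', it is clearer simply to take $y=0\in C$ in $\lambda\cdot y\ge\lambda\cdot p$ to get $\lambda\cdot p\le 0$ directly.
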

\begin{proof}
Assume the contrary that $\lambda\cdot x\ge 0$ for all $\lambda\in C^*$. It follows from here that $x\in C^{**}$. On the other hand, by Lemma \ref{C**}, we have $C=C^{**}$, which contradicts the hypothesis $x\notin C$.
\end{proof}


\begin{lemma}\label{lemaVP3}
Let $C$ be a closed convex cone and $x\in C$. Then  $x\in {\rm int}(C)$ if and only if $\lambda\cdot x>0$ for all $\lambda\in C^*$ with $\lambda\ne 0$.
\end{lemma}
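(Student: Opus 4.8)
The plan is to prove the two implications of the equivalence separately, dispatching the ``interior $\Rightarrow$ positivity'' direction by an elementary perturbation argument and reserving the separation Lemma \ref{lemaVP1} (together with a compactness argument) for the converse, which is the substantive half.

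For the necessity direction, I would assume $x\in{\rm int}(C)$ and argue straight from the definition of interior. Choose $r>0$ so that the open ball of radius $r$ centered at $x$ is contained in $C$, and fix an arbitrary $\lambda\in C^*$ with $\lambda\ne 0$. The idea is to perturb $x$ slightly in the direction $-\lambda$: for every $\eps>0$ with $\eps\norm{\lambda}<r$, the point $x-\eps\lambda$ still lies in the ball, hence in $C$. Since $\lambda\in C^*$ means $\lambda\cdot y\ge 0$ for all $y\in C$, applying this with $y=x-\eps\lambda$ yields $\lambda\cdot x-\eps\norm{\lambda}^2\ge 0$, so $\lambda\cdot x\ge\eps\norm{\lambda}^2>0$. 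As $\lambda$ was arbitrary, this proves the positivity statement.

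For the sufficiency direction I would assume $\lambda\cdot x>0$ for every nonzero $\lambda\in C^*$ and suppose, for contradiction, that $x\notin{\rm int}(C)$. Then every neighborhood of $x$ meets the complement of $C$, so there is a sequence $x_k\to x$ with $x_k\notin C$. Applying Lemma \ref{lemaVP1} to each $x_k$ produces $\lambda_k\in C^*$ with $\lambda_k\cdot x_k<0$; in particular each $\lambda_k\ne 0$, so I may rescale to assume $\norm{\lambda_k}=1$ (the polar cone $C^*$ being a cone, the normalized vectors remain in $C^*$). By compactness of the unit sphere, pass to a convergent subsequence $\lambda_k\to\lambda$ with $\norm{\lambda}=1$, so $\lambda\ne 0$; and since $C^*$ is closed (being an intersection of the closed half-spaces $\{\lambda:\lambda\cdot y\ge 0\}$ over $y\in C$), the limit $\lambda$ lies in $C^*$. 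Finally, by continuity of the scalar product $\lambda_k\cdot x_k\to\lambda\cdot x$, and each term is negative, so $\lambda\cdot x\le 0$. This contradicts the hypothesis $\lambda\cdot x>0$, completing the proof.

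The main obstacle is the converse direction: one must extract from the sequence of separating functionals a single limiting functional that is simultaneously nonzero and still a member of $C^*$, since a naive limit could collapse to $0$ and the inequality $\lambda\cdot x\le 0$ would then be vacuous. This is exactly what the normalization $\norm{\lambda_k}=1$, the compactness of the sphere, and the closedness of $C^*$ are designed to secure; once these three facts are in place the contradiction is immediate.
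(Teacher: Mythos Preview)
Your proof is correct and follows essentially the same approach as the paper: the converse direction is identical (sequence $x_k\notin C$, separating functionals from Lemma~\ref{lemaVP1}, normalization, compactness of the sphere, closedness of $C^*$), and the forward direction uses the same perturbation $x-\eps\lambda\in C$, only phrased directly rather than by contradiction. The paper's version first reduces to $\lambda\cdot x=0$ before perturbing, whereas you obtain $\lambda\cdot x\ge\eps\norm{\lambda}^2>0$ in one step, which is a minor streamlining of the same idea.
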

\begin{proof} 
Let $x\in {\rm int}(C)$. We prove that $\lambda\cdot x>0$ for all $\lambda\in C^*$ with $\lambda\ne 0$.
Suppose the contrary that there exists $\lambda\in C^*$ with $\lambda\cdot x\le 0$, $\lambda\ne 0$. It follows from the definition of the positive polar cone that $\lambda\cdot x=0$. There exists a number $\delta>0$ such that $x-\delta\lambda\in{\rm int}(C)$, because $\lambda\in\R^n$. By $\lambda\in C^*$ we have
$\lambda\cdot(x-\delta\lambda)\ge 0$. We obtain from here that $\lambda=0$, which is a contradiction.

Let us prove the converse claim. Suppose that $\lambda\cdot x>0$ for all $\lambda\in C^*$ with $\lambda\ne 0$, but $x\notin {\rm int}(C)$.  It follows from $x\notin{\rm int}(C)$ that there exists an infinite sequence $x_k$, converging to $x$, such that $x_k\notin C$. It follows from Lemma \ref{lemaVP1} that there exists $\lambda_k\in C^*$ such that $\lambda_k\cdot x_k<0$. We conclude from here that $\lambda_k\ne 0$. Without loss of generality, we suppose that
$\norm{\lambda_k}=1$ for all positive integers $k$. Passing to a subsequence we could suppose that $\lambda_k$ converges to some point $\lambda_0\ne 0$. Taking the limits when $k\to +\infty$, we obtain that $\lambda_0\cdot x\le 0$. Since the polar cone is always closed, we conclude that $\lambda_0\in C^*$. We conclude from here that $\lambda_0\cdot x>0$, which is a contradiction.
\end{proof}

Suppose that $\bar x\in S$ is a weak local minimizer for the problem (P). Let us consider the function
\[
F(x):=\max\{\lambda\cdot[f(x)-f(\bar x)]+\mu\cdot g(x)\mid (\lambda,\mu)\in\Lambda\},
\]
where $\Lambda:=\{(\lambda,\mu)\mid\lambda\in C^*,\;\mu\in K^*,\;\sum_{i=1}^n\lambda_i^2+\sum_{j=1}^m\mu_j^2=1\}$.

\begin{lemma}\label{F}
Suppose that $\bar x\in S$ is a weak local minimizer for the problem (P). Then there exists a neighborhood $N\ni\bar x$ such that $F(x)\geq F(\bar x)=0$ for all $x\in N$.
\end{lemma}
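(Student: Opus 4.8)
The plan is first to record that the maximum defining $F$ is attained, so that $F$ is a genuine function: the index set $\Lambda$ is the intersection of the closed cone $C^*\times K^*$ with the unit sphere of $\R^n\times\R^m$, hence compact, and it is nonempty because the ordering cone satisfies $C\ne\R^n$, whence $C^*\ne\{0\}$ (any point outside $C$ is separated by a nonzero element of $C^*$ via Lemma \ref{lemaVP1}); the map $(\lambda,\mu)\mapsto\lambda\cdot[f(x)-f(\bar x)]+\mu\cdot g(x)$ is linear, hence continuous on the compact set $\Lambda$. I would then evaluate $F(\bar x)$. Since the $f$-term vanishes at $x=\bar x$, one has $F(\bar x)=\max\{\mu\cdot g(\bar x)\mid(\lambda,\mu)\in\Lambda\}$. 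For every $(\lambda,\mu)\in\Lambda$ feasibility $g(\bar x)\in -K$ together with $\mu\in K^*$ gives $\mu\cdot(-g(\bar x))\ge 0$, so $\mu\cdot g(\bar x)\le 0$ and therefore $F(\bar x)\le 0$; on the other hand choosing $\mu=0$ and any unit $\lambda\in C^*$ produces the value $0$, so $F(\bar x)=0$.

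For the inequality $F(x)\ge 0$ on a neighborhood I would take $N$ to be exactly the neighborhood furnished by the definition of a weak local minimizer (shrunk, if necessary, so that $N\subset X$), and split $x\in N$ into two cases. If $x$ is infeasible, i.e. $g(x)\notin -K$, then $-g(x)\notin K$, and Lemma \ref{lemaVP1} applied to the cone $K$ yields $\mu\in K^*$ with $\mu\cdot(-g(x))<0$, hence $\mu\ne 0$ and $\mu\cdot g(x)>0$; after normalizing $\mu$ and pairing it with $\lambda=0$ we get $(\lambda,\mu)\in\Lambda$ and therefore $F(x)\ge\mu\cdot g(x)>0$. No proximity to $\bar x$ is needed in this case.

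The substantive case is $x\in S\cap N$. Here weak minimality gives $f(x)\notin f(\bar x)-{\rm int}(C)$, i.e. the vector $w:=f(\bar x)-f(x)$ lies outside ${\rm int}(C)$, and the aim is to dualize this into a nonzero $\lambda\in C^*$ with $\lambda\cdot(f(x)-f(\bar x))\ge 0$, equivalently $\lambda\cdot w\le 0$; pairing such a normalized $\lambda$ with $\mu=0$ then gives $F(x)\ge -\,\lambda\cdot w\ge 0$. To produce this $\lambda$ I would distinguish two subcases. If $w\notin C$, then Lemma \ref{lemaVP1} directly supplies $\lambda\in C^*$ with $\lambda\cdot w<0$, so $\lambda\ne 0$. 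If instead $w\in C$ but $w\notin{\rm int}(C)$, then the forward implication of Lemma \ref{lemaVP3}, read contrapositively, shows that $\lambda\cdot w>0$ cannot hold for all nonzero $\lambda\in C^*$, so some nonzero $\lambda\in C^*$ satisfies $\lambda\cdot w\le 0$. In either subcase the required $\lambda$ exists.

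I expect the main obstacle to be precisely this last step, namely converting the geometric weak-minimality condition $w\notin{\rm int}(C)$ into a separating functional in $C^*$. The delicate point is that $w$ need not belong to $C$, so Lemma \ref{lemaVP3} cannot be invoked verbatim, since its hypothesis is that the point lies in $C$; the case distinction $w\in C$ versus $w\notin C$, using Lemma \ref{lemaVP1} in the latter situation, is what repairs this gap and forms the crux of the argument. Collecting the infeasible case and the two feasible subcases, I obtain $F(x)\ge 0=F(\bar x)$ for every $x\in N$, which is the assertion.
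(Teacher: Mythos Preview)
Your proof is correct and follows essentially the same two-case structure as the paper (feasible versus infeasible $x$, with the same separating lemmas applied to produce an element of $\Lambda$). You add several points the paper leaves implicit: compactness and nonemptiness of $\Lambda$ so that the maximum is attained, and an explicit computation of $F(\bar x)=0$.

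The one substantive difference is in the feasible case. The paper invokes Lemma~\ref{lemaVP3} directly on the vector $w=f(\bar x)-f(x)\notin{\rm int}(C)$ to obtain a nonzero $\lambda\in C^*$ with $\lambda\cdot w\le 0$, without checking the lemma's stated hypothesis $w\in C$. You correctly observe that this hypothesis may fail, and you repair the gap by splitting into the subcases $w\notin C$ (handled by Lemma~\ref{lemaVP1}) and $w\in C\setminus{\rm int}(C)$ (handled by the contrapositive of Lemma~\ref{lemaVP3}). This extra care is warranted by how the lemma is stated, even though a glance at the proof of Lemma~\ref{lemaVP3} shows its converse direction does not actually use the hypothesis $w\in C$; your argument is the cleaner way to stay within the lemmas as written.
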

\begin{proof}
Using that $\bar x$ is weakly efficient, we conclude that  there exists a neighborhood $N\ni\bar x$ with the property that there is no another feasible point $x\in S\cap N$ with $f(x)\in f(\bar x)-{\rm int}(C)$. 
Let $x$ be an arbitrary point from $N$. Consider two cases:

Let $x\in N\cap S$. Then $f(x)-f(\bar x)\notin-{\rm int}(C)$. By Lemma \ref{lemaVP3} there exists $\lambda\in C^*$, $\lambda\ne 0$ such that $\lambda\cdot [f(x)-f(\bar x)]\geq 0$. Let us take $\mu=0$. Without loss of generality we suppose that $(\lambda,\mu)\in\Lambda$. Then it follows from the definition of the function $F$ that $F(x)\geq\lambda\cdot [f(x)-f(\bar x)]\geq 0$.

Let $x\in N\setminus S$. It follows from here that $g(x)\notin -K$. According to Lemma \ref{lemaVP1} there exists $\mu\in K^*$ such that $\mu\cdot g(x)>0$, $\mu\ne 0$. Let us take $\lambda=0$. Then without loss of generality we can suppose that $(\lambda,\mu)\in\Lambda$. According to the definition of the function $F$, we have $F(x)\geq\mu\cdot g(x)>0$.

Then taking into account both cases and that $F(\bar x)=0$, we have that $F(x)\geq F(\bar x)$ for every point $x\in N$.
\end{proof}

\begin{theorem}\label{thVP1}
Suppose that $\bar x\in S$ is a weak local minimizer for the problem (P). Then
\[
\ld 1 F(\bar x;u)\ge 0,\quad\ld 2 F(\bar x;0;u)\ge 0\quad\textrm{ for all }u\in\E.
\]
\end{theorem}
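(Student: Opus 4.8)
The plan is to reduce the cone-constrained vector problem to the unconstrained scalar minimization of the merit function $F$ and then quote Theorem \ref{th1}. The decisive observation is that Lemma \ref{F} already does almost all of the work: it produces a neighborhood $N\ni\bar x$ on which $F(x)\ge F(\bar x)=0$, and this says precisely that $\bar x$ is a local minimizer of the real-valued function $F$ in the sense used in Section 3. So once $F$ is recognized as an admissible argument for Theorem \ref{th1}, the conclusion is immediate.

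First I would verify that $F$ is a legitimate object for Theorem \ref{th1}, namely a proper extended real function with $\bar x\in\dom F$. For each fixed $x$, the map $(\lambda,\mu)\mapsto\lambda\cdot[f(x)-f(\bar x)]+\mu\cdot g(x)$ is linear, hence continuous in $(\lambda,\mu)$, and the index set $\Lambda$ is a compact subset of $\R^n\times\R^m$, being the intersection of the closed polar cones $C^*$ and $K^*$ with the unit sphere. Therefore the maximum defining $F(x)$ is attained and finite for every $x$, so $F$ is real-valued (in particular proper), and $F(\bar x)=0$ gives $\bar x\in\dom F$.

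Then I would simply apply Theorem \ref{th1} to $F$ at $\bar x$. By Lemma \ref{F}, $\bar x$ is a local minimizer of $F$, so that theorem yields $\ld 1 F(\bar x;u)\ge 0$ and $\ld 2 F(\bar x;0;u)\ge 0$ for all $u\in\E$, which is exactly the assertion to be proved. I note in passing that the first inequality forces $0\in\lsubd 1 F(\bar x)$, which is what makes the second-order derivative $\ld 2 F(\bar x;0;u)$ well defined in the sense of Definition \ref{df-2had}; this is handled inside the proof of Theorem \ref{th1} and requires nothing further here.

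I do not expect any genuine obstacle in this argument, since the substantive content lies entirely in Lemma \ref{F}, whose proof already exploits the separation results (Lemmas \ref{lemaVP1} and \ref{lemaVP3}) characterizing weak efficiency through the cones $C^*$ and $K^*$. The single point deserving a word of care is the finiteness of $F$, so that Theorem \ref{th1} is applicable; as indicated above, this follows at once from the compactness of $\Lambda$. The remainder is a direct transcription of Theorem \ref{th1} for the scalar function $F$.
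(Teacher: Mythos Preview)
Your proposal is correct and follows essentially the same route as the paper: apply Lemma \ref{F} to see that $\bar x$ is a local minimizer of the scalar function $F$, then invoke Theorem \ref{th1}. The paper's own proof is the single sentence ``The theorem directly follows from Theorem \ref{th1} and Lemma \ref{F}''; your additional verification that $F$ is real-valued (via compactness of $\Lambda$) is a welcome detail that the paper leaves implicit.
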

\begin{proof}
The theorem directly follows from Theorem \ref{th1} and Lemma \ref{F}.
\end{proof}

For the problem 
\[
R^n_+{\rm -minimize}\quad f(x)\quad \textrm{subject to}\quad x\in S,
\]
where $R^n_+$ is the positive orthant in $\R^n$  was introduced the following notion about isolated local minimizers under the name strict local minimizer:

\begin{definition}[\cite{jim02}]\label{jimenez}
A point $\bar x\in S$ is called a strict local minimizer of order $k$ iff there exists a constant $A>0$ and a neighborhood $N$ of $\bar x$ such that
\begin{equation}\label{34}
(f(x)+R^n_+)\cap B(f(\bar x),A\|x-\bar x\|^k)=\emptyset,\quad\forall x\in S\cap N\setminus\{\bar x\}.
\end{equation}
\end{definition}

Really, this definition is equivalent to the following one:
\begin{definition}\label{isolmin}
A point $\bar x\in S$ is called an isolated local minimizer of order $k$ iff there exists a constant $A>0$ and a neighborhood $N$ of $\bar x$ such that for every $x\in S\cap N\setminus\{\bar x\}$ there exists $i\in\{1,2,\dots,n\}$, which depend on $x$, with
\begin{equation}\label{29}
f_i(x)>f_i(\bar x)+A\|x-\bar x\|^k,
\end{equation}
where $\|a\|:=\sqrt{\sum_{i=1}^n a_i^2}$.
\end{definition}

\begin{proposition}
Definitions \ref{jimenez} and \ref{isolmin} are equivalent.
\end{proposition}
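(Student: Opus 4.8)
The plan is to show that, for each fixed $x\ne\bar x$, both definitions reduce to a single scalar inequality measuring how far the vector $f(x)-f(\bar x)$ protrudes in the positive directions, and then to pass between the Euclidean formulation of Definition \ref{jimenez} and the coordinatewise formulation of Definition \ref{isolmin} by the equivalence of the Euclidean and max norms on $\R^n$.

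First I would translate the geometric condition in Definition \ref{jimenez}. Fix $x$ and put $a=f(x)-f(\bar x)$ and $a_+=((a_1)_+,\dots,(a_n)_+)$, where $t_+:=\max\{t,0\}$. A generic element of $f(x)+\R^n_+$ is $f(x)+w$ with $w\ge 0$, so the Euclidean distance from $f(\bar x)$ to the translated orthant is $\min_{w\ge 0}\norm{a+w}$, and minimizing coordinate by coordinate gives exactly $\norm{a_+}$ (the $i$-th coordinate contributes $a_i^2$ when $a_i\ge 0$ and $0$ when $a_i<0$). Interpreting $B(c,r)$ as the open ball, the requirement $(f(x)+\R^n_+)\cap B(f(\bar x),A\norm{x-\bar x}^k)=\emptyset$ is therefore equivalent to $\norm{a_+}_2\ge A\norm{x-\bar x}^k$; the closed-ball convention only replaces $\ge$ by $>$ and is handled identically. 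On the other side, the existence of an index $i$ with $f_i(x)>f_i(\bar x)+A\norm{x-\bar x}^k$ is precisely $\max_i a_i>A\norm{x-\bar x}^k$, and since the right-hand side is positive (because $x\ne\bar x$) this reads $\norm{a_+}_\infty>A\norm{x-\bar x}^k$.

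With both definitions rewritten as bounds on $\norm{a_+}$ — Definition \ref{jimenez} in the Euclidean norm and Definition \ref{isolmin} in the max norm — I would finish using $\norm{a_+}_\infty\le\norm{a_+}_2\le\sqrt n\,\norm{a_+}_\infty$. If Definition \ref{jimenez} holds with constant $A$ on $N$, then $\norm{a_+}_\infty\ge\norm{a_+}_2/\sqrt n\ge (A/\sqrt n)\norm{x-\bar x}^k>(A/(2\sqrt n))\norm{x-\bar x}^k$ for every $x\in S\cap N\setminus\{\bar x\}$, the last inequality being strict because $\norm{x-\bar x}^k>0$; hence Definition \ref{isolmin} holds with constant $A/(2\sqrt n)$ on the same $N$. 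Conversely, if Definition \ref{isolmin} holds with constant $A$ on $N$, then $\norm{a_+}_2\ge\norm{a_+}_\infty>A\norm{x-\bar x}^k$, so the distance strictly exceeds $A\norm{x-\bar x}^k$ and the intersection in Definition \ref{jimenez} is empty with the same constant $A$ and neighborhood $N$.

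The routine bookkeeping aside, the only genuine computation is the distance to the shifted orthant; the point to watch is the interplay between the strict inequality in Definition \ref{isolmin} and the (non-strict) emptiness condition in Definition \ref{jimenez}, which is why the constant is halved in the forward direction — the positivity of $\norm{x-\bar x}^k$ for $x\ne\bar x$ is exactly what upgrades $\ge$ to $>$. I expect no serious obstacle beyond keeping the two norms and the two inequality conventions straight.
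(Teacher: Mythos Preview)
Your argument is correct. The key computation --- that the Euclidean distance from $f(\bar x)$ to $f(x)+\R^n_+$ equals $\norm{(f(x)-f(\bar x))_+}_2$ --- is right, and once both definitions are rewritten as lower bounds on $\norm{a_+}$ in the $\ell^2$ and $\ell^\infty$ norms respectively, the equivalence follows from norm comparison, with the constant adjusted by a factor depending on $n$ in one direction.

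The paper's proof reaches the same conclusion by a more hands-on route: it argues each implication by contradiction, in one direction reading off $f_i(x)\le f_i(\bar x)+A\norm{x-\bar x}^k$ from a point $a$ in the nonempty intersection, and in the other direction exhibiting a concrete point of $f(x)+\R^n_+$ inside the ball (at the cost of inflating $A$ to $nA$). Your version makes the mechanism transparent by isolating the single scalar quantity $\norm{a_+}$ that both definitions are really controlling; the paper's version avoids naming that quantity but performs the same estimates implicitly. The constant degradation is the same phenomenon in both proofs (your $\sqrt n$ versus the paper's $n$ is just a looser bound on their side), and your handling of the strict-versus-nonstrict inequality via halving the constant is a clean way to deal with the open/closed ball ambiguity that the paper leaves unaddressed.
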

\begin{proof}
Suppose that $\bar x$ is an isolated local minimizer of order $k$ in the sense of Definition \ref{isolmin}, but (\ref{34}) does not hold. Therefore there exists $a=(a_1,a_2,\dots,a_n)\in\R^n$ such that
$a_i\geq f_i(x)$ for every $i$ and $\|a-f(\bar x)\|\leq A\|x-\bar x\|^k$. It follows from here that $f_i(x)\leq f_i(\bar x)+A\|x-\bar x\|^k$, which is a contradiction to (\ref{29}).

Suppose that $\bar x$ is a strict local minimizer of order $k$ in the sense of Definition \ref{jimenez}, but (\ref{29}) does not hold. Therefore, for every constant $A>0$ and every neighborhood $N\ni\bar x$ there exists  $x\in S\cap N\setminus\{\bar x\}$ such that
\[
f_i(x)\leq f_i(\bar x)+A\|x-\bar x\|^k,\quad\forall i\in\{1,2,\dots,n\}.
\]
It follows from here that
\[
(f(x)+R^n_+)\cap B(f(\bar x),nA\|x-\bar x\|^k)\ne\emptyset,
\]
which is a contradiction.
\end{proof}

We extend Definition \ref{isolmin} to the following notion in the case $k=1,2$.

\begin{definition}
A feasible point $\bar x$ is called an isolated local minimizer of order $k$, $k=1,2$ for the problem {\rm (P)} iff there exist a constant $A$ and a neighborhood $N\ni\bar x$ such that for all $x\in S\cap N$ there is 
\[
\lambda^*\in C^*,\quad \lambda^*=(\lambda_1^*,\dots,\lambda_n^*)\ne 0,\quad \sum_{i}(\lambda^*_i)^2=1,
\]
which depend on $x$, with
\[
\lambda^*\cdot f(x)\geq \lambda^*\cdot f(\bar x)+A\|x-\bar x\|^k.
\]
\end{definition}

\begin{theorem}
Let $\bar x$ be a feasible point for the problem {\rm (P)}.
Then the following claims are equivalent:

a) $\bar x$ is an isolated local minimizer of second-order;

b) the following conditions hold for all $u\in\E$:
\begin{equation}\label{36}
\ld 1 F(\bar x;u)\ge 0\quad\textrm{and}\quad \ld 2 F(\bar x;0;u)>0, u\ne 0; 
\end{equation}

c) the following conditions
\begin{equation}\label{28}
\ld 1 F(\bar x;u)\ge 0,\quad\forall u\in\E
\end{equation}
and 
\begin{equation}\label{27}
u\ne 0,\;\ld 1 F(\bar x;u)=0,\quad\Rightarrow\quad
\ld 2 F(\bar x;0;u)>0.
\end{equation}
are satisfied.
\end{theorem}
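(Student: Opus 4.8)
The plan is to reduce the constrained vector problem to the single scalar function $F$ and then quote Theorem \ref{th2}. As already noted in the proof of Lemma \ref{F}, one has $F(\bar x)=0$ for every feasible $\bar x$: a unit $\lambda\in C^*$ paired with $\mu=0$ attains the value $0$, while feasibility $g(\bar x)\in-K$ together with $\mu\in K^*$ forces $\mu\cdot g(\bar x)\le0$, so no pair of $\Lambda$ beats $0$. With $F(\bar x)=0$ in hand, the conditions (\ref{36}) and the pair (\ref{28})--(\ref{27}) are exactly hypotheses b) and c) of Theorem \ref{th2} written for the scalar function $F$ at the point $\bar x$. Applying Theorem \ref{th2} to $F$ therefore yields at once the equivalence of b) and c), and shows that each of them is equivalent to the single statement that $\bar x$ is an isolated local minimizer of second order of the scalar function $F$. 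Thus everything reduces to proving that claim a) holds if and only if $\bar x$ is a scalar isolated minimizer of second order of $F$.

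For the implication ``$F$ scalar isolated $\Rightarrow$ a)'' I would argue as follows. Assume $F(x)>C\norm{x-\bar x}^2$ on a punctured neighborhood of $\bar x$. If $x\in S\cap N$ is feasible and $(\lambda,\mu)\in\Lambda$ attains $F(x)$, then $\mu\cdot g(x)\le0$, so $\lambda\cdot[f(x)-f(\bar x)]\ge F(x)>C\norm{x-\bar x}^2$; in particular $\lambda\ne0$, and since $\norm\lambda\le1$ the normalized multiplier $\lambda^*=\lambda/\norm\lambda\in C^*$ with $\norm{\lambda^*}=1$ only enlarges the left-hand side. Taking $A=C$ this is precisely the inequality demanded by the definition of an isolated local minimizer of order two for (P). This direction is routine and uses nothing beyond the definitions and the fact that $C^*$ is a cone.

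The converse, ``a) $\Rightarrow$ $F$ scalar isolated,'' is where I expect the real work, and it splits by feasibility. For feasible $x\in S\cap N$ I would insert the normalized $\lambda^*$ furnished by a) together with $\mu=0$, obtaining $F(x)\ge\lambda^*\cdot[f(x)-f(\bar x)]\ge A\norm{x-\bar x}^2$, and then shrink the constant (say to $A/2$) to make the bound strict. The genuinely delicate case is $x\in N\setminus S$: here $g(x)\notin-K$, so Lemma \ref{lemaVP1} supplies $\mu\in K^*$, $\mu\ne0$, with $\mu\cdot g(x)>0$ and hence $F(x)>0$, but mere positivity does not by itself deliver the quadratic lower bound $F(x)>C\norm{x-\bar x}^2$ needed to conclude that $\bar x$ is an isolated minimizer of $F$. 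Securing genuine quadratic growth of the constraint violation as $x\to\bar x$ off the feasible set is, in my view, the crux of the argument; a constraint violated only to higher than second order would otherwise let $F$ decay faster than $\norm{x-\bar x}^2$ along infeasible directions and break the equivalence. I would therefore isolate this infeasible branch as a separate lemma and expect to need some extra regularity of the constraint map $g$ near $\bar x$ (continuity together with a growth or metric-regularity hypothesis on the system $g(x)\in-K$) to close it; pinning down the weakest such hypothesis is the step on which I would concentrate most of my effort.
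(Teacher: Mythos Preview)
Your reduction to Theorem \ref{th2} is a clean way to organize the argument, and it is essentially what the paper does: rather than invoking Theorem \ref{th2}, the paper simply rewrites its proof with $F$ in place of $f$, so the structural difference is cosmetic. Your handling of b)$\Leftrightarrow$c) and of ``$F$ scalar isolated $\Rightarrow$ a)'' is correct and matches the paper's c)$\Rightarrow$a) step.

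Where you part company with the paper is exactly the point you flag as the crux, and your diagnosis is sharper than the paper's own proof. In its proof of a)$\Rightarrow$b) the paper establishes $F(x)\ge A\norm{x-\bar x}^2$ \emph{only for} $x\in S\cap N$ and then silently uses this bound for all $x$ near $\bar x$ when computing $\ld 1 F(\bar x;u)$ and $\ld 2 F(\bar x;0;u)$. The infeasible branch is never addressed. As you anticipated, a constraint violated only to higher than second order breaks the implication: take $s=2$, $n=m=1$, $C=K=[0,\infty)$, $f(x_1,x_2)=x_1^2$, $g(x_1,x_2)=x_2^4$. Then $S=\{(x_1,0)\}$ and $\bar x=(0,0)$ is an isolated minimizer of order two for (P) (with $\lambda^*=1$ and any $A\in(0,1]$), so a) holds; yet along $u=(0,1)$ one has $F(0,t)=t^4$, hence $\ld 2 F(\bar x;0;u)=0$, and both b) and c) fail. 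Your instinct that some metric-regularity or quadratic-growth hypothesis on the constraint system $g(x)\in -K$ is needed to repair the statement is therefore exactly right; the paper's proof, as written, glosses over the very gap you identified, and the theorem as stated does not hold without such an additional assumption.
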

\begin{proof}
It is obvious that the implication b) $\Rightarrow$ c) holds. 

We prove a) $\Rightarrow$ b).  Let $\bar x$ be an isolated local minimizer of second-order. We prove that Conditions (\ref{36}) hold. Suppose that $u\in\E$ is arbitrary chosen. 
It follows from the definition of a second-order isolated minimizer that there exist a constant $A$ and a neighborhood $N\ni\bar x$ such that for every $x\in S\cap N$ there is $\lambda^*\in C^*$, $\lambda^*\ne 0$, $\sum_{i=1}^n(\lambda_i^*)^2=1$ with
\[
\lambda^*\cdot[f(x)-f(\bar x)]\ge A\|x-\bar x\|^2.
\]
If we choose $\mu^*=0$, then $(\lambda^*,\mu^*)\in\Lambda$. By the definition of the function $F$ we have
\[
F(x)\ge\lambda^*\cdot[f(x)-f(\bar x)]\ge A\|x-\bar x\|^2.
\]
Therefore
\begin{equation}\label{37}
\ld 1 F (\bar x;u)=\liminf_{t\downarrow 0,u^\pr\to u,}\,t^{-1}[F(\bar x+t u^\pr)-F(\bar x)]\ge\liminf_{t\downarrow 0,u^\pr\to u,}\, At\norm{u^\pr}^2=0.
\end{equation}
According to Inequality (\ref{37}) we have   $0\in\lsubd 1 F(\bar x)$. 
It follows from here that
\[
\ld 2 F(\bar x;0;u)=\liminf_{t\downarrow 0,u^\pr\to u,}\,2t^{-2}[F(\bar x+t u^\pr)-F(\bar x)] 
\ge\liminf_{t\downarrow 0,u^\pr\to u,}\, 2\, A\norm{u^\pr}^{2}=2\, A\norm{u}^2>0
\]
for all directions $u$ such that $u\ne 0$.

We prove c) $\Rightarrow$ a). Suppose that conditions {\rm (\ref{28})} and {\rm (\ref{27})}  hold. We prove that $\bar x$ is an isolated local minimizer of second-order.
Assume the contrary that $\bar x$ is not an isolated local minimizer of second-order. Therefore, for every sequence $\{\varepsilon_k\}_{k=1}^\infty$ of positive numbers
converging to zero, there exists a sequence $\{x_k\}$ with $x_k\in S$ such that
\begin{equation}\label{26}
\norm{x_k-\bar x}\le \varepsilon_k,\quad
\lambda\cdot [f(x_k)-f(\bar x)]<\varepsilon_k\norm{x_k-\bar x}^2,\quad\forall \lambda\in C^*. 
\end{equation}
Therefore $x_k\to\bar x$. Denote $t_k=\norm{x_k-\bar x}$,
$d_k=(x_k-\bar x)/t_k$. Passing to a subsequence, we may suppose
that $d_k\to  d$ where $\norm{d}=1$. It follows from the definition of the function $F$ that
\[
F(x_k)=\max\{\lambda\cdot[f(x_k)-f(\bar x)]+\mu\cdot g(x_k)\mid (\lambda,\mu)\in\Lambda\}.
\]
Taking into account (\ref{26}) and $g(x_k)\in -K$ we conclude that
$
F(x_k)\leq \varepsilon_k t^2_k.
$
Hence
\[
\ld 1 F (\bar x;d)\le\liminf_{k\to\infty}\, t^{-1}_k[F(\bar x+t_k d_k)-F(\bar x)]
\le\liminf_{k\to\infty}\varepsilon_k t_k=0.
\]
It follows from (\ref{28}) that $0\in\subd F(\bar x)$ and $\ld 1 F (\bar x;d)=0$. We have
\[
\ld 2 F (\bar x;0;d)\le\liminf_{k\to\infty}\,2 t^{-2}_k F(\bar x+t_k d_k)\le\liminf_{k\to 0}\, 2\varepsilon_k=0,
\]
which is contrary to the assumption (\ref{27}).
\end{proof}

\section{Comparison with some previous results}
\label{s8}

In this section, we review a lot of necessary and sufficient optimality conditions in unconstrained optimization and prove that they follow from Theorems \ref{th1} and \ref{th2} as particular cases.

The following necessary conditions in terms of Dini derivatives are well-known:

\begin{proposition}\label{pr2}
Let $\bar x\in\dom f $ be a local minimizer of the function $f$. Then
\begin{equation}\label{21}
\dini f(\bar x;u)\ge 0\quad\textrm{for all}\quad u\in\E
\end{equation}
and
\begin{equation}\label{22}
\dini f(\bar x;u)=0\quad\Rightarrow\quad\secdini f(\bar x;u)\ge 0,
\end{equation}
\end{proposition}

In the next claim and example we compare the lower Hadamard derivatives with the lower Dini derivatives. If some point $x$ do not satisfy  the necessary conditions in terms of Dini derivatives, then it is not a local minimizer. We prove that in this case Theorem \ref{th1} also detects that $x$ is not a local minimizer. In the example, we show that there exist functions such that Theorem \ref{th1} can reject the point as a possible minimizer, but Proposition \ref{pr2} cannot. 

\begin{proposition}
Let $\f$ be an arbitrary proper extended real function. Suppose that $\bar x\in\dom f$ be a point such that
(\ref{21}) or (\ref{22}) do not hold. Then at least one of the inequalities (\ref{15}) is not satisfied.
\end{proposition}
\begin{proof}
Suppose that there exists a direction $u\in\E$ such that (\ref{21}) fails. Then 
\[
\ld 1 f(\bar x;u)\le\dini f(\bar x;u)<0
\]
and the first inequality of (\ref{15}) also fails. 

Suppose that there exists a direction $u\in\E$ such that (\ref{22}) is not satisfied. Then $\ld 1 f(\bar x;u)\le\dini f(\bar x;u)=0$. If there exists a direction $v\in\E$ with $\ld 1 f(\bar x;v)<0$, then (\ref{15}) fails. Otherwise $\ld 1 f(\bar x;v)\ge 0$ for all $v\in\E$. 
Therefore $\ld 1 f(\bar x;u)=0$, $0\in\lsubd 1 f(\bar x)$ and $\ld 2 f(\bar x;0;u)$ is well defined. We have
\[
\ld 2 f(\bar x;0;u) \le \liminf_{t\downarrow 0}\, 2\, t^{-2} [f(\bar x+t u)-f(\bar x)]=\secdini f(\bar x;u)<0,
\]
because $\dini f(\bar x;u)=0$. Thus (\ref{15}) also fails.
\end{proof}

\begin{example}
Consider the function of two variables
\[
f(x_1,x_2)=\left\{
\begin{array}{ll}
-(x_1^2+x_2^2), & \textrm{if}\quad x_2=x_1^2,\; x_1>0, \\
0 & otherwise.
\end{array}\right.
\]
The point $\bar x=(0,0)$ is not a local minimizer. Easy calculations give that $\ld 1 f(\bar x,u)=0$ for every direction $u\in\R^2$. Therefore $(0,0)\in\lsubd 1 f(\bar x)$. We have $\ld 2 f(\bar x,0,\bar u)=-2$, where $\bar u=(1,0)$ and Theorem \ref{th1} detects that $\bar x$ is not a minimizer. On the other hand $f^\pr_D(\bar x,u)=f^{\pr\pr}_D(\bar x,u)=0$ for every direction $u\in\R^2$ and the lower Dini derivatives cannot detect that $\bar x$ is not a minimizer. 
\end{example}

In the paper \cite{pas08}, the authors introduced the so called $l$-stable functions and generalized some earlier conditions for an isolated local minimum in unconstrained optimization to these functions. We prove that the main result in this paper \cite[Theorem 6]{pas08} is a particular case of Theorem \ref{th2}.


\begin{lemma}[\cite{pas08}]\label{lema2}
Let $f:\E\to\R$ be a continuous function on some neighborhood of the point $x\in\E$ and $l$-stable at $x$. Then $f$ is Lipschitz on a neighborhood of $x$.
\end{lemma}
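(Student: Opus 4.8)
The plan is to reduce the Lipschitz estimate to a single bound on the lower Dini derivative $\dini f(x;\cdot)$ at the base point, combining Diewert's mean value theorem (Lemma \ref{mv}) with $\ell$-stability. First I would fix a ball $B(x,\delta)$ whose closure lies in the neighbourhood $U$ from the definition of $\ell$-stability and on which $f$ is continuous. For $y,z\in B(x,\delta)$, applying Lemma \ref{mv} to the continuous (hence lower semicontinuous) function $t\mapsto f(y+t(z-y))$ on $[0,1]$ produces a point $w$ on the segment $[y,z]$ with $f(y)-f(z)\le\dini f(w;y-z)$. Since $w\in B(x,\delta)\subseteq U$, $\ell$-stability gives $\dini f(w;y-z)\le\dini f(x;y-z)+K\norm{w-x}\,\norm{y-z}\le\dini f(x;y-z)+K\delta\norm{y-z}$. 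Thus everything reduces to bounding $\dini f(x;y-z)$, i.e.\ to establishing $\dini f(x;v)\le M\norm{v}$ for some constant $M$ and all $v\in\E$.

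The hard part is precisely this bound, and I would obtain it by proving that $\dini f(x;\cdot)$ is sublinear. Positive homogeneity is immediate from the definition, and finiteness of $\dini f(x;v)$ for every $v$ is built into $\ell$-stability, since the defining inequality compares $\dini f(y;u)$ to the finite quantity $\dini f(x;u)$. For subadditivity I would compare values along two directions: applying Lemma \ref{mv} to $f$ on the segment joining $x+tv_2$ and $x+tv_1$ and then invoking $\ell$-stability yields, for small $t>0$, $f(x+tv_1)-f(x+tv_2)\le t\,\dini f(x;v_1-v_2)+t\,\eps(t)$, where $\eps(t)=K\norm{w_t-x}\,\norm{v_1-v_2}\to 0$ because the intermediate point $w_t$ tends to $x$. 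Inserting $\pm f(x)$, dividing by $t$, and taking $\liminf_{t\downarrow 0}$ (the perturbation $\eps(t)\to 0$ does not affect the liminf) gives $\dini f(x;v_1)\le\dini f(x;v_2)+\dini f(x;v_1-v_2)$; with $v_1=a+b$, $v_2=b$ this is the subadditivity $\dini f(x;a+b)\le\dini f(x;a)+\dini f(x;b)$. Hence $\dini f(x;\cdot)$ is a finite sublinear, in particular convex, function on the finite-dimensional space $\E$; being a finite convex function it is continuous, so it attains a finite maximum $M$ on the compact unit sphere, which gives $\dini f(x;v)\le M\norm{v}$ for all $v\in\E$.

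Combining the two steps finishes the argument: for $y,z\in B(x,\delta)$ the first step and the bound $\dini f(x;y-z)\le M\norm{y-z}$ give $f(y)-f(z)\le(M+K\delta)\norm{y-z}$, and interchanging $y$ and $z$ yields $|f(y)-f(z)|\le(M+K\delta)\norm{y-z}$, so $f$ is Lipschitz on $B(x,\delta)$ with constant $M+K\delta$. I expect the only genuine obstacle to be the sublinearity step: the derivation of subadditivity rests on the careful interplay between Diewert's mean value theorem and the $\ell$-stability estimate, and the passage from finiteness in each fixed direction to a uniform bound on the sphere crucially exploits that $\E$ is finite-dimensional, through the automatic continuity of finite convex functions. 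The remaining estimates are routine.
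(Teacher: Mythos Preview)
The paper does not give its own proof of this lemma: it is simply quoted from \cite{pas08}. So there is nothing in the present paper to compare your argument against line by line. That said, your proof is correct. The reduction via Diewert's mean value theorem to a bound on $\dini f(x;\cdot)$, followed by the subadditivity argument (again via Lemma~\ref{mv} and $\ell$-stability) showing that $\dini f(x;\cdot)$ is finite and sublinear on the finite-dimensional space $\E$, is sound; the passage from sublinearity to a uniform bound on the unit sphere via continuity of finite convex functions is standard and valid.

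One remark on economy: since the paper also records Proposition~\ref{pr1} (likewise cited from \cite{pas08}), a much shorter route is available. Proposition~\ref{pr1} states that continuity near $x$ together with $\ell$-stability at $x$ forces strict differentiability at $x$; and strict differentiability at a point immediately yields a local Lipschitz estimate, since for any $\eps>0$ one has $|f(y)-f(z)-\nabla f(x)(y-z)|\le\eps\norm{y-z}$ for all $y,z$ in a small ball around $x$, hence $|f(y)-f(z)|\le(\norm{\nabla f(x)}+\eps)\norm{y-z}$. Your argument has the merit of being self-contained (it does not invoke the strict-differentiability black box) and of making the Lipschitz constant explicit, but if you are willing to cite Proposition~\ref{pr1} the whole sublinearity step becomes unnecessary. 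A minor quibble: your sentence ``the defining inequality compares $\dini f(y;u)$ to the finite quantity $\dini f(x;u)$'' is slightly circular as written; the point is rather that the inequality $|\dini f(y;u)-\dini f(x;u)|\le K\norm{y-x}\norm{u}$ with finite right-hand side forces \emph{both} Dini derivatives to be finite simultaneously.
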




\begin{proposition}[\cite{pas08}]\label{bp08}
Let $f:\E\to\R$ be continuous on some neighborhood of   $x\in\E$ and let $f$ be $l$-stable at $x$. If 
\begin{equation}\label{20}
\dini f(x;h)=0\quad\textrm{and}\quad \secdini f(x;h)>0,\quad\forall h\ne 0,
\end{equation}
then $x$ is an isolated local minimizer of order $2$ for $f$. 
\end{proposition}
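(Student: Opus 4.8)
The plan is to reduce the statement entirely to Theorem \ref{th2} by showing that the Dini-type hypothesis (\ref{20}), in the presence of continuity and $\ell$-stability, forces the Hadamard-type condition b) of that theorem (namely (\ref{6})) at the point $x$. So the work splits into verifying the two inequalities in (\ref{6}) and then quoting Theorem \ref{th2}.

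First I would exploit the available regularity. By Proposition \ref{pr1}, continuity near $x$ together with $\ell$-stability at $x$ guarantees that $f$ is Fr\'echet differentiable at $x$. Consequently $\dini f(x;u)=\nabla f(x)(u)$, and directly from the definition of the lower Hadamard derivative one also gets $\ld 1 f(x;u)=\nabla f(x)(u)$, since Fr\'echet differentiability makes the incremental quotient converge (not merely in the $\liminf$ sense) jointly as $t\downarrow 0$ and $u^\pr\to u$. The hypothesis $\dini f(x;h)=0$ for every $h\ne 0$ then forces $\nabla f(x)=0$, whence $\ld 1 f(x;u)=0$ for all $u$. In particular $0\in\lsubd 1 f(x)$, so the first inequality in (\ref{6}) holds and the second-order derivative $\ld 2 f(x;0;u)$ is well defined.

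The core step is to identify $\ld 2 f(x;0;u)$ with $\secdini f(x;u)$. Since $\nabla f(x)=0$ we have $\dini f(x;u)=0$ for all $u$, and $f$, being continuous near $x$, is radially lower semicontinuous there; thus Lemma \ref{lema4} applies and yields $\lim_{t\downarrow 0,u^\pr\to u}t^{-2}[f(x+tu^\pr)-f(x+tu)]=0$. I would then write
\[
2t^{-2}[f(x+tu^\pr)-f(x)]=2t^{-2}[f(x+tu^\pr)-f(x+tu)]+2t^{-2}[f(x+tu)-f(x)]
\]
and pass to the $\liminf$ over $t\downarrow 0$, $u^\pr\to u$. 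The first summand tends to $0$ as a genuine limit, while the second does not depend on $u^\pr$ and hence contributes exactly $\secdini f(x;u)$; using the elementary fact that adding a term with limit $0$ leaves a $\liminf$ unchanged, I obtain $\ld 2 f(x;0;u)=\secdini f(x;u)$ for every $u$.

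Finally I would invoke the hypothesis: (\ref{20}) gives $\secdini f(x;h)>0$ for all $h\ne 0$, so $\ld 2 f(x;0;u)>0$ for all $u\ne 0$. Combined with $\ld 1 f(x;u)\ge 0$ from the first step, this is precisely condition b) of Theorem \ref{th2}, i.e.\ (\ref{6}), and that theorem then concludes that $x$ is an isolated local minimizer of second order. The main obstacle is the middle step, the identification $\ld 2 f(x;0;u)=\secdini f(x;u)$: the Hadamard derivative perturbs the direction ($u^\pr\to u$) whereas the Dini one keeps it fixed, and it is exactly Lemma \ref{lema4} --- the only place where $\ell$-stability is genuinely used --- that shows this perturbation contributes nothing at second order. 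Everything else is bookkeeping with $\liminf$s.
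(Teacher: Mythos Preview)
Your proof is correct and follows essentially the same route as the paper: identify the first-order Hadamard and Dini derivatives, invoke Lemma~\ref{lema4} to kill the $u'\to u$ perturbation at second order, conclude $\ld 2 f(x;0;u)=\secdini f(x;u)$, and apply Theorem~\ref{th2}. The only cosmetic difference is that the paper uses Lemma~\ref{lema2} (local Lipschitzness) to equate $\ld 1 f$ and $\dini f$, whereas you use Proposition~\ref{pr1} (Fr\'echet differentiability at $x$) for the same purpose; both are quoted from \cite{pas08} and either suffices.
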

\begin{proof}
Suppose that conditions (\ref{20}) are satisfied. Then it follows from Lemma \ref{lema2} that $f$ is Lipschitz on some neighborhood of $x$. Therefore, the lower Hadamard and Dini derivatives coincide, that is $\ld 1 f(x;h)=\dini f(x;h)=0$ for every $h\in S$. It follows from here that $0\in\lsubd 1 f(x)$ and $\ld 2 f(x;0;h)$ exists. 
Equality (\ref{40}) is satisfied  by Lemma \ref{lema4} 
 It follows from here that
\[
\ld 2 f(x;0;h)=\liminf_{t\downarrow 0,h^\pr\to h}\;2[f(x+t h^\pr)-f(x)]/t^2
\]
\[
=\liminf_{t\downarrow 0,h^\pr\to h}\;2[f(x+th)-f(x)]/t^2=\secdini f(x;h),
\]
which implies that $\ld 2 f(x;0;h)=\secdini f(x;h)>0$ for each $h\in S$. Then, by Theorem \ref{th2}, $x$ is an isolated minimizer of second order.
\end{proof}

The following optimality conditions were derived in \cite{ggr06}. 
\begin{proposition}[\cite{ggr06}]\label{GGR}
Consider a given function $f:\E\to\R$, which belongs to the class {\rm C}$^{1,1}$.

\noindent
(Necessary conditions) Let $x$ be a local minimizer of $f$. Then $\nabla f(x)=0$ and for each $u\in \E$ it holds
$\secdini f(x;u)\ge 0$.

\noindent
(Sufficient Conditions) Let the point $\bar x$ satisfy the following conditions:
\begin{equation}\label{ggr}
\nabla f(x)=0,\quad  \secdini f(x;u)>0,\;\forall\;  u\in\E, u\ne 0.
\end{equation}
Then $x$ is an isolated minimizer of second-order. Conversely, every isolated minimizer of second-order satisfies these conditions. 
\end{proposition}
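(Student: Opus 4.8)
The plan is to deduce all three assertions from Theorems \ref{th1} and \ref{th2} by showing that for a C$^{1,1}$ function the lower Hadamard derivatives collapse onto the familiar Fréchet and Dini objects. First I would record two reductions valid for such $f$. Since $f$ is continuously differentiable, it is Fréchet differentiable, so $\ld 1 f(x;u)=\nabla f(x)(u)=\dini f(x;u)$ for every $u\in\E$. Since in addition $\nabla f$ is locally Lipschitz, $f$ is locally Lipschitz and, moreover, $\ell$-stable at $x$: the bound $|\dini f(y;u)-\dini f(x;u)|=|(\nabla f(y)-\nabla f(x))(u)|\le K\norm{y-x}\norm{u}$ is immediate from the Lipschitz continuity of the gradient. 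These two facts are the bridge to the earlier machinery.

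For the necessary conditions, let $x$ be a local minimizer. Theorem \ref{th1} gives $\ld 1 f(x;u)\ge 0$ for all $u$, i.e. $\nabla f(x)(u)\ge 0$ for every direction; applying this to $u$ and to $-u$ forces $\nabla f(x)=0$. Then $\dini f(x;u)=0$ for all $u$, so by $\ell$-stability Lemma \ref{lema4} applies, and exactly as in the proof of Proposition \ref{bp08} one obtains the identity $\ld 2 f(x;0;u)=\secdini f(x;u)$. The second inequality of Theorem \ref{th1} then reads $\secdini f(x;u)\ge 0$, which is the asserted necessary condition.

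For the sufficient conditions and their converse I would work through Theorem \ref{th2}. If $\nabla f(x)=0$, then $\ld 1 f(x;u)=\nabla f(x)(u)=0$ for all $u$, hence $0\in\lsubd 1 f(x)$ and the identity $\ld 2 f(x;0;u)=\secdini f(x;u)$ is again available via Lemma \ref{lema4}. Thus the hypothesis $\secdini f(x;u)>0$ for $u\ne 0$ is precisely condition b) of Theorem \ref{th2}, and that theorem yields that $x$ is an isolated local minimizer of second order. Conversely, if $x$ is such a minimizer, Theorem \ref{th2} supplies $\ld 1 f(x;u)\ge 0$ (forcing $\nabla f(x)=0$ as above) together with $\ld 2 f(x;0;u)>0$ for $u\ne 0$, and the identity translates this back into $\secdini f(x;u)>0$.

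The only substantive step is the identity $\ld 2 f(x;0;u)=\secdini f(x;u)$. The Hadamard second-order derivative permits the perturbation $u^\pr\to u$ inside the second difference quotient, whereas the Dini derivative keeps the direction fixed; so the main obstacle is to show that the discrepancy $[f(x+tu^\pr)-f(x+tu)]/t^2$ vanishes in the limit as $t\downarrow 0$ and $u^\pr\to u$. This is exactly the content of Lemma \ref{lema4}, and the whole reduction hinges on verifying that a C$^{1,1}$ function satisfies its $\ell$-stability hypothesis, which the opening paragraph secures.
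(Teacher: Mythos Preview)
Your proposal is correct and follows essentially the same route as the paper: the paper's one-line proof states that the result follows from Theorems \ref{th1} and \ref{th2} via Lemma \ref{lema4} (with the sufficient part being a special case of Proposition \ref{bp08}), and your argument spells out precisely this reduction, including the observation that C$^{1,1}$ implies $\ell$-stability and the use of Lemma \ref{lema4} to identify $\ld 2 f(x;0;u)$ with $\secdini f(x;u)$ once $\nabla f(x)=0$.
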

It follows from Lemma \ref{lema4} that Proposition \ref{GGR} is a corollary of Theorems \ref{th1} and \ref{th2}. The conditions for isolated local minimum are particular case of Proposition \ref{bp08}.

Consider the second-order lower directional derivative 
\[
f^{\pr\pr}_{BP}(x;u,v):=\liminf_{t\downarrow 0}\, t^{-1}[\nabla f(x+tu)(v)-\nabla f(x)(v)].
\]

The following result \cite[Theorem 3.1]{bp04}  is a corollary of the sufficient conditions in Theorem \ref{th2}:

\begin{proposition}[\cite{bp04}]\label{BP}
Consider a given function $f:\E\to\R$, which belongs to the class {\rm C}$^{1,1}$ on some neighborhood of $x$. If
\begin{equation}\label{bp}
\nabla f(x)=0,\quad{\textrm and}\quad  f^{\pr\pr}_{BP}(x;u,u)>0,\;\forall u\in\E\setminus\{0\},
\end{equation}
then $f$ attains a strict local minimum at $x$.
\end{proposition}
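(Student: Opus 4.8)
The final statement is Proposition \ref{BP}: if $f \in$ C$^{1,1}$ near $x$, $\nabla f(x)=0$, and $f^{\pr\pr}_{BP}(x;u,u)>0$ for all $u\ne 0$, then $f$ has a strict local minimum at $x$. The plan is to deduce this from the sufficient half of Theorem \ref{th2} (the implication b) $\Rightarrow$ a), giving an isolated local minimizer of second order, which is a fortiori a strict local minimum). To invoke Theorem \ref{th2} I must verify the two conditions in (\ref{6}), namely $\ld 1 f(x;u)\ge 0$ for all $u$ and $\ld 2 f(x;0;u)>0$ for all $u\ne 0$. The bridge between the hypotheses, stated in terms of Dini-type objects and $f^{\pr\pr}_{BP}$, and the Hadamard derivatives appearing in Theorem \ref{th2} is the C$^{1,1}$ regularity together with Lemma \ref{lema4}.

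First I would establish the first-order condition. Since $f$ is C$^{1,1}$ near $x$, it is in particular locally Lipschitz, so the lower Hadamard and lower Dini first-order derivatives coincide: $\ld 1 f(x;u)=\dini f(x;u)$. The hypothesis $\nabla f(x)=0$ then gives $\dini f(x;u)=\nabla f(x)(u)=0$ for every $u$, so $\ld 1 f(x;u)=0\ge 0$ for all $u$ and hence $0\in\lsubd 1 f(x)$; this makes $\ld 2 f(x;0;u)$ well defined and simultaneously satisfies the first inequality in (\ref{6}). Note this also puts us squarely in the situation of Lemma \ref{lema4}, whose standing hypotheses ($f$ radially lower semicontinuous near $x$, $\ell$-stable at $x$, and $\dini f(x;u)=0$ for all $u$) are all met here, since every C$^{1,1}$ function is $\ell$-stable and continuous.

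Second, for the crucial second-order condition I would use Lemma \ref{lema4} exactly as it was used in the proof of Proposition \ref{bp08}: because (\ref{40}) holds, the $\liminf$ defining $\ld 2 f(x;0;u)$, taken over both $t\downarrow 0$ and $u^\pr\to u$, collapses onto the radial $\liminf$,
\[
\ld 2 f(x;0;u)=\liminf_{t\downarrow 0}\,2t^{-2}[f(x+tu)-f(x)]=\secdini f(x;u),
\]
the last equality using $\dini f(x;u)=0$. It therefore remains only to show $\secdini f(x;u)>0$ for $u\ne 0$, and this is where the hypothesis $f^{\pr\pr}_{BP}(x;u,u)>0$ enters. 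I would relate the two second-order objects by a Taylor/mean-value argument valid for C$^{1,1}$ functions: writing $\phi(t)=f(x+tu)$, one has $\phi^\prime(t)=\nabla f(x+tu)(u)$ absolutely continuous, and $f(x+tu)-f(x)-t\nabla f(x)(u)=\int_0^t[\nabla f(x+su)(u)-\nabla f(x)(u)]\,ds$; dividing by $t^2/2$ and passing to the $\liminf$ identifies $\secdini f(x;u)$ with (at least) $f^{\pr\pr}_{BP}(x;u,u)$, so the strict BP-positivity forces $\secdini f(x;u)>0$. Both (\ref{6}) conditions then hold, and Theorem \ref{th2} delivers the conclusion.

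The main obstacle is the last comparison, $\secdini f(x;u)\ge f^{\pr\pr}_{BP}(x;u,u)$ (or equality). The derivatives are defined by different limiting processes—one differences function values, the other differences gradients—so the integral representation of the remainder and the fact that for C$^{1,1}$ functions $\nabla f(x+tu)(u)$ is Lipschitz in $t$ are what make the $\liminf$'s comparable; care is needed because $\liminf$ of an average need not equal the average of $\liminf$'s, so the clean route is to lower-bound the difference quotient $2t^{-2}\int_0^t[\nabla f(x+su)(u)-\nabla f(x)(u)]\,ds$ by something whose $\liminf$ is $f^{\pr\pr}_{BP}(x;u,u)$. Once this single inequality is secured, everything else is a direct substitution into Theorem \ref{th2}.
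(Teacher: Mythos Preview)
Your proposal is correct and follows essentially the same route as the paper: reduce to Theorem \ref{th2} (via Lemma \ref{lema4}, exactly as in Proposition \ref{bp08}) by showing $\secdini f(x;u)>0$ from $f^{\pr\pr}_{BP}(x;u,u)>0$. The step you flag as the obstacle is handled in the paper precisely by the method you outline: with $\varphi(t)=f(x+tu)$, from $\liminf_{t\downarrow 0}\varphi^\pr(t)/t>0$ one fixes $\alpha>0$ and $\delta>0$ with $\varphi^\pr(t)>\alpha t$ on $(0,\delta)$, then integrates to get $\varphi(t)-\varphi(0)>\alpha t^2/2$, hence $\secdini f(x;u)\ge\alpha>0$.
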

\begin{proof}
Suppose that $f$ satisfies Conditions (\ref{bp}). We prove that Conditions (\ref{ggr}) also hold. Denote $\varphi(t)=f(x+tu)$. It follows from here that $\varphi^\pr(t)=\nabla f(x+tu)(u)$ and $\varphi^\pr(0)=0$. Then
\[
f^{\pr\pr}_{BP}(x;u,u):=\liminf_{t\downarrow 0}\, \varphi^\pr(t)/t,\quad{\textrm and}\quad
\secdini f(x;u)=\liminf_{t\downarrow 0}\, 2[\varphi(t)-\varphi(0)]/t^2.
\]
By Conditions (\ref{bp}) we have $\liminf_{t\downarrow 0}\, \varphi^\pr(t)/t>0$. Therefore, there exist $\alpha>0$ and $\delta>0$ such that
\[
\varphi^\pr(t)>\alpha t,\quad\forall t\in(0,\delta).
\]
On the other hand
\[
\varphi(t)-\varphi(0)=\int_0^t \varphi^\pr(s)ds>\int_0^t\alpha s ds=\alpha t^2/2.
\]
Hence, by Proposition \ref{GGR} and Theorems \ref{th2}, $x$ is an isolated minimizer of second-order. Therefore, the function attains an isolated local minimum of second-order at $x$.
\end{proof}

We will not generalize the necessary conditions in \cite{bp04}, because they are obtained in terms of another second-order upper generalized derivative and the author's proof is very short. We should mention that Proposition \ref{BP} is an generalization of the sufficient conditions by Cominetti, Correa \cite[Proposition 5.2]{cc90} and also \cite[Proposition 6.2]{cha94}, \cite[Theorem 5.1 (ii)]{yan96}, \cite[Theorem 4.2 (ii)]{yan99}. Therefore, these results are covered by our sufficient conditions.

\smallskip
In several papers, R. W. Chaney introduced and studied a second-order directional derivative; see, for example, \cite{cha87}. We recall the definition of the derivative of Chaney.

It is called that a sequence $\{x_k\}$, $x_k\in\E$, $x_k\ne x$ converges to a point $x\in\E$ in direction $u\in\E$, $u\ne 0$ iff the sequence $\{(x_k-x)/\norm{x_k-x}\}$ converges to $u$. 

Let $f:\R^n\to\R$ be a locally Lipschitz function. Denote its Clarke generalized gradient at the point $x$ by $\partial f(x)$. Suppose that $u$ is a nonzero vector in $\R^n$. Denote by $\partial_u f(x)$ the set of all vectors $x^*$ such that there exist sequences $\{x_k\}$ and $\{x_k^*\}$ with $x_k^*\in\partial f(x_k)$, $\{x_k\}$ converges to $x$ in direction $u$, and $\{x_k^*\}$ converges to $x^*$. Really $\partial_u f(x)\subset\partial f(x)$.

\begin{definition}[\cite{cha87}]\label{def5}
Let $f:\R^n\to\R$ be a locally Lipschitz function. Suppose that $x\in\R^n$,  $u\in\R^n$, and $x^*\in\partial_u f(x)$. Then the second-order lower derivative of Chaney $\sld f(x;x^*;u)$ at $(x,x^*)$ in direction $u$ is defined to be the infinimum of all numbers
\[
\liminf\,2[f(x_k)-f(x)-x^*(x_k-x)]/t^2_k,
\]
taken over all triples of sequences $\{t_k\}$, $\{x_k\}$, and $\{x_k^*\}$ for which

\noindent {\rm (a)} $t_k>0$ for each $k$ and $\{x_k\}$ converges to $x$, \\
\noindent {\rm (b)} $\{t_k\}$ converges to $0$ and $\{(x_k-x)/t_k\}$ converges to $u$, \\
\noindent {\rm (c)} $\{x_k^*\}$ converges to $x^*$ with $x_k^*\in\partial f(x_k)$ for each $k$.
\end{definition}


The following claims due to Huang and Ng \cite[Theorems 2.2, 2.7 and 2.9]{hua94} are important necessary and sufficient conditions for optimality in unconstrained optimization. The necessary conditions are generalizations of the respective results due to Chaney \cite[Theorem 1]{cha87}, where the function is semismooth.

\begin{proposition}[\cite{hua94}]\label{pr7}
Let  $f:\R^n\to\R$ be a locally Lipschitz function. Suppose that  $\dini f(x;v)\ge 0$, for all $v\in\R^n$.
For $u\in\R^n$ with norm $1$, if $\dini f(x;u)=0$, then $0\in\partial_u f(x)$.
\end{proposition}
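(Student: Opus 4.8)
The plan is to produce, directly from the definition of $\partial_u f(x)$, sequences $\{y_k\}$ and $\{x_k^*\}$ with $x_k^*\in\partial f(y_k)$, such that $\{y_k\}$ converges to $x$ in direction $u$ and $x_k^*\to 0$. First I would exploit the hypothesis $\dini f(x;u)=0$: since this quantity is a $\liminf$, there is a sequence $t_k\downarrow 0$ with $c_k:=t_k^{-1}[f(x+t_ku)-f(x)]\to 0$. The natural tool on each segment $[x,\,x+t_ku]$ is Lebourg's mean-value theorem for locally Lipschitz functions, which yields $\theta_k\in(0,1)$, a point $y_k:=x+\theta_k t_k u$, and a subgradient $x_k^*\in\partial f(y_k)$ with $f(x+t_ku)-f(x)=\scalpr{x_k^*}{t_ku}$, hence $\scalpr{x_k^*}{u}=c_k\to 0$. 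Because $y_k=x+\theta_k t_k u$ lies on the ray through $u$ and $\norm{u}=1$, the sequence $\{y_k\}$ converges to $x$ in direction $u$ automatically.

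Next I would use local Lipschitzness: on a neighbourhood of $x$ the Clarke subgradients are bounded by the Lipschitz constant, so $\{x_k^*\}$ is bounded and, passing to a subsequence, $x_k^*\to x^*$ for some $x^*$. By construction $x^*\in\partial_u f(x)$ and $\scalpr{x^*}{u}=0$; moreover, upper semicontinuity of the Clarke subdifferential gives $x^*\in\partial f(x)$. Thus the easy part of the argument locates a limiting subgradient whose component along $u$ vanishes.

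The hard part is the transverse control: the mean-value step pins down only the component $\scalpr{x^*}{u}=0$, and this alone does not force $x^*=0$, since $\partial f(x)$ may contain nonzero elements whose inner product with $u$ vanishes. This is precisely where the full hypothesis $\dini f(x;v)\ge 0$ for \emph{every} $v\in\R^n$ must enter, rather than only the single value along $u$. My plan to close the gap is to refine the choice of the points $y_k$ by a variational argument: for each $k$ I would minimize $f$ over a ball centred on the ray at $x+t_ku$ with radius $o(t_k)$, so that every point of the ball still approaches $x$ in direction $u$, and use the nonnegativity of $\dini f(x;\cdot)$ in all directions to prevent the minimizer from escaping toward the boundary, so that an interior minimizer $y_k$ delivers $0\in\partial f(y_k)$ outright. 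Establishing that this transverse escape cannot occur—equivalently, upgrading $\scalpr{x^*}{u}=0$ to $x^*=0$—is the step I expect to be the genuine obstacle, because the hypothesis is only a directional (Dini), not a uniform (Hadamard), lower bound, and $x$ need not be a local minimizer of $f$.
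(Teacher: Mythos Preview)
The paper does not contain a proof of this proposition. It is stated with the citation \cite{hua94} (Huang and Ng) and used as an external tool in the subsequent proofs of Propositions~\ref{pr5} and~\ref{pr4}; no argument for it is supplied anywhere in the text. Consequently there is no ``paper's own proof'' against which your attempt can be compared.

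As to the attempt itself: your diagnosis of the difficulty is accurate. The Lebourg mean-value step together with compactness of Clarke subgradients delivers some $x^*\in\partial_u f(x)$ with $\scalpr{x^*}{u}=0$, but this is strictly weaker than $0\in\partial_u f(x)$, and the hypothesis $\dini f(x;v)\ge 0$ for all $v$ must be used in full. Your proposed variational refinement---minimizing $f$ over balls of radius $o(t_k)$ centred at $x+t_ku$ and hoping for an interior minimizer---is not yet a proof: nothing in the hypotheses prevents the minimizer from sitting on the boundary of each such ball, since the Dini condition at $x$ says nothing about the behaviour of $f$ at nearby points $y\ne x$. A workable route (essentially what is done in \cite{hua94}) is instead to invoke Ekeland's variational principle or an equivalent approximate-minimum argument to produce points $y_k\to x$ in direction $u$ at which $f$ is nearly minimal relative to a suitable perturbation, yielding subgradients $x_k^*$ whose \emph{entire} norm, not just the $u$-component, tends to $0$. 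Without such a device the transverse control you flag as the obstacle remains genuinely open in your outline.
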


\begin{lemma}\label{lema1}
Let $f:\R^n\to\R$ be a locally Lipschitz function. Suppose that $x\in\R^n$ and $u\in\R^n$. If $0\in\partial f_u(x)$ and $0\in\lsubd 1 f(x)$, then $\sld f(x;0;u)=\ld 2 f(x;0;u)$.
\end{lemma}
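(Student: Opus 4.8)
The plan is to rewrite both second-order derivatives as infima of the \emph{same} second-order quotient over suitable families of sequences, and then to compare the two families. First I would reparametrize Chaney's derivative: with $x^*=0$ the quotient in Definition \ref{def5} reduces to $2t_k^{-2}[f(x_k)-f(x)]$, so that
\[
\sld f(x;0;u)=\inf_{\{t_k\},\{x_k\},\{x_k^*\}}\ \liminf_{k\to\infty}\,2t_k^{-2}[f(x_k)-f(x)],
\]
the infimum being over all triples satisfying (a), (b), (c). Writing $x_k=x+t_k u_k^\pr$, condition (b) reads $u_k^\pr=(x_k-x)/t_k\to u$ with $t_k\downarrow 0$, and since the $\liminf$ in Definition \ref{df-2had} is the least cluster value along such sequences,
\[
\ld 2 f(x;0;u)=\inf_{\{t_k\},\{u_k^\pr\}}\ \liminf_{k\to\infty}\,2t_k^{-2}[f(x+t_k u_k^\pr)-f(x)]
\]
over all $t_k\downarrow 0$, $u_k^\pr\to u$. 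The two right-hand sides differ only in the extra requirement (c) present for Chaney, namely the existence of $x_k^*\in\partial f(x_k)$ with $x_k^*\to 0$.

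From this description the inequality $\sld f(x;0;u)\ge\ld 2 f(x;0;u)$ is immediate: every triple admissible for Chaney yields, after dropping $\{x_k^*\}$, a sequence admissible for the Hadamard derivative with identical quotient, so the Chaney infimum is taken over a smaller family. Here the two hypotheses do complementary bookkeeping: $0\in\lsubd 1 f(x)$ guarantees $\ld 2 f(x;0;u)$ is well defined (and, via $\ld 1 f(x;v)\le\dini f(x;v)$ and local Lipschitzness, gives $0\in\lsubd 1 f(x)\subseteq\partial f(x)$), while $0\in\partial_u f(x)$ guarantees the Chaney family is nonempty.

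The reverse inequality $\sld f(x;0;u)\le\ld 2 f(x;0;u)$ is the main obstacle, because a sequence realizing the Hadamard infimum need not carry Clarke subgradients tending to zero. To supply them I would fix a Hadamard-minimizing sequence $x_k=x+t_k u_k^\pr$ with $2t_k^{-2}[f(x_k)-f(x)]\to\ld 2 f(x;0;u)=:L$ and apply Lebourg's mean value theorem on each segment $[x,x_k]$ to obtain $z_k=x+\theta_k t_k u_k^\pr$ with $\theta_k\in(0,1)$ and $\xi_k\in\partial f(z_k)$ satisfying $f(x_k)-f(x)=t_k\,\xi_k\cdot u_k^\pr$. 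Finiteness of $L$ forces $\xi_k\cdot u_k^\pr\to 0$, and upper semicontinuity of $\partial f$ pins every cluster point of $\{\xi_k\}$ inside $\partial f(x)\cap\{\,\xi\cdot u=0\,\}$.

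The delicate point, which I expect to be the crux of the whole argument, is to convert these mean-value data into a genuinely Chaney-admissible triple, i.e.\ to replace $\{z_k,\xi_k\}$ by a base sequence whose Clarke subgradients actually converge to $0$ without raising the second-order quotient above $L$. This is exactly where $0\in\partial_u f(x)$ must be used essentially: I would combine the witness sequence furnished by $0\in\partial_u f(x)$ with the mean-value points $z_k$ and pass to a diagonal subsequence, controlling the perturbation of the numerator by the Lipschitz bound. Establishing that the required adjustment of the base point can be made without a first-order cost in the quotient is the technical heart of the proof, and it is precisely the step that breaks down when $0\notin\partial_u f(x)$ — as for $f=|\cdot|$ along a unit direction $u$, where all nearby subgradients stay bounded away from $0$.
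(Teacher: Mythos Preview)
Your inequality $\sld f(x;0;u)\ge\ld 2 f(x;0;u)$ is correct and cleanly argued: once $x^*=0$ the Chaney quotient is independent of $\{x_k^*\}$, so dropping that sequence enlarges the family over which the infimum is taken. For the reverse inequality, however, your proposal is explicitly incomplete. Lebourg's mean-value theorem produces $\xi_k\in\partial f(z_k)$ with $\xi_k\cdot u\to 0$, but in dimension $n>1$ this does not force $\xi_k\to 0$, and your plan to ``combine the witness sequence furnished by $0\in\partial_u f(x)$ with the mean-value points and pass to a diagonal subsequence'' is only sketched. You have not shown that the base points can be perturbed to ones whose Clarke subdifferential meets every neighbourhood of $0$ at only an $o(t_k^2)$ cost in the numerator --- precisely the step you yourself label the ``technical heart'' --- so the argument stops short of the claimed equality.

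The paper's own proof is far shorter and does not engage this difficulty at all. After writing $u_k=(x_k-x)/t_k$ and noting that for $x^*=0$ the quotient no longer involves $\{x_k^*\}$, the paper simply asserts that the Chaney infimum coincides with the infimum over \emph{all} pairs satisfying (a) and (b) --- in effect discarding condition (c) outright --- and identifies this with $\ld 2 f(x;0;u)$. No justification is given for dropping (c), and your worry that a given pair need not admit $x_k^*\in\partial f(x_k)$ with $x_k^*\to 0$ is legitimate (for instance $f(t)=t^2(2+\sin(1/t))$, $x=0$, $u=1$, with $x_k$ chosen so that $\cos(1/x_k)=1$). In short, you have isolated a genuine subtlety that the paper glosses over; the paper treats the reverse inequality as immediate, while you attempt --- but do not complete --- a more honest repair of the same step.
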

\begin{proof}
Denote $u_k=(x_k-x)/t_k$. Then
\[
\sld f(x;0;u)=\liminf\,2[f(x+t_k u_k)-f(x)]/t^2_k,
\]
where the limes infinimum is taken over all pairs of sequences $\{t_k\}$, $\{u_k\}$, which satisfy Conditions (a) and (b) from Definition \ref{def5}. It follows from here that
\[
\sld f(x;0;u)=\liminf_{t\downarrow 0,u^\pr\to u}\,2[f(x+t u^\pr)-f(x)]/t^2=\,\ld 2 f(x;0;u),
\]
which completes the proof.
\end{proof}

\begin{proposition}[\cite{hua94}]\label{pr5}
Let  $f:\R^n\to\R$ be a locally Lipschitz function. Suppose that 
\[
\dini f(\bar x;v)\ge 0,\quad\forall v\in\R^n,\; v\ne 0.
\]
If $\sld f(\bar x;0;u)>0$ for all unit vectors $u\in\R^n$ for which $\dini f(\bar x;u)=0$, then $\bar x$ is a strict local minimizer.
\end{proposition}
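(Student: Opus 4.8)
The plan is to deduce this sufficient condition directly from the implication c) $\Rightarrow$ a) of Theorem \ref{th2}, transporting Chaney's data to the lower Hadamard derivatives by means of Lemma \ref{lema1}. Since $f$ is locally Lipschitz, the lower Hadamard and lower Dini directional derivatives coincide, so $\ld 1 f(\bar x;u)=\dini f(\bar x;u)$ for every $u\in\E$. The hypothesis $\dini f(\bar x;v)\ge 0$ for all $v\ne 0$ (and trivially at $v=0$) therefore yields $\ld 1 f(\bar x;u)\ge 0$ for all $u\in\E$, which is precisely condition (\ref{4}); in particular $0\in\lsubd 1 f(\bar x)$.

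First I would verify the implication (\ref{5}). Let $u\ne 0$ satisfy $\ld 1 f(\bar x;u)=0$ and set $\hat u=u/\norm{u}$. By the positive homogeneity of the first-order derivative we get $\ld 1 f(\bar x;\hat u)=0$, hence $\dini f(\bar x;\hat u)=0$ with $\norm{\hat u}=1$. Proposition \ref{pr7} then gives $0\in\partial_{\hat u}f(\bar x)$. Having both $0\in\partial_{\hat u}f(\bar x)$ and $0\in\lsubd 1 f(\bar x)$ at our disposal, Lemma \ref{lema1} applies and yields the identity $\sld f(\bar x;0;\hat u)=\ld 2 f(\bar x;0;\hat u)$.

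Next I would invoke the main hypothesis: because $\hat u$ is a unit vector with $\dini f(\bar x;\hat u)=0$, we have $\sld f(\bar x;0;\hat u)>0$, and therefore $\ld 2 f(\bar x;0;\hat u)>0$. Since the second-order lower Hadamard derivative is positively homogeneous of degree two in the direction, $\ld 2 f(\bar x;0;u)=\norm{u}^2\,\ld 2 f(\bar x;0;\hat u)>0$, which establishes (\ref{5}). With (\ref{4}) and (\ref{5}) in hand, Theorem \ref{th2} gives that $\bar x$ is an isolated local minimizer of second-order; since every such point satisfies $f(x)>f(\bar x)+C\norm{x-\bar x}^2>f(\bar x)$ for $x\ne\bar x$ in a neighborhood, it is in particular a strict local minimizer.

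The only genuine work lies in the passage from Chaney's derivative $\sld$ to $\ld 2$, and this is exactly the content of Lemma \ref{lema1}; the point to check is that both of its hypotheses are available, the condition $0\in\partial_{\hat u}f(\bar x)$ being supplied by Proposition \ref{pr7} and $0\in\lsubd 1 f(\bar x)$ by (\ref{4}). No substantial obstacle remains once the normalization to unit vectors is handled through the degree-one and degree-two positive homogeneity of the two Hadamard derivatives.
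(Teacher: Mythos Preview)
Your argument is correct and follows essentially the same route as the paper: use local Lipschitzness to identify the Dini and Hadamard first-order derivatives (giving condition (\ref{4}) and $0\in\lsubd 1 f(\bar x)$), invoke Proposition~\ref{pr7} to get $0\in\partial_u f(\bar x)$, apply Lemma~\ref{lema1} to transfer Chaney's positivity to $\ld 2 f(\bar x;0;u)$, and conclude via Theorem~\ref{th2}. Your treatment is in fact slightly more explicit than the paper's, since you spell out the normalization to unit vectors via the degree-one and degree-two positive homogeneity, whereas the paper verifies (\ref{5}) only for unit $u$ and leaves this reduction implicit.
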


\begin{proof}
Since $f$ is locally Lipschitz, then $\ld 1 f(\bar x;v)=\dini f(\bar x;v)\ge 0$ for all directions $v\in\R^n$. Therefore $0\in\lsubd 1 f(\bar x)$. Suppose that
$\dini f(\bar x;u)=0$ for some unit direction $u$. It follows from Lemma \ref{lema1} that 
\[
\ld 2 f(\bar x;0;u)=\sld f(\bar x;0;u)>0,
\]
 because by Proposition \ref{pr7} we have $0\in\partial_u f(\bar x)$. Then, according to Theorem \ref{th2} the point $\bar x$ is an isolated local minimizer.
\end{proof}

It is seen that our proof is shorter than the proof in \cite{hua94}.

\begin{proposition}[\cite{hua94}]\label{pr4}
Let $\bar x$ be a local minimum point of the locally Lipschitz function $f$ and $u\in\R^n$ with norm $1$ such that $\dini f(\bar x;u)=0$. Then 
\[
0\in\partial_u f(\bar x)\quad{\textrm and }\quad \sld f(\bar x;0;u)\ge 0.
\]
\end{proposition}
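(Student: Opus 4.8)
The plan is to synthesize the necessary conditions already established in Theorem~\ref{th1} with the two bridging results, Proposition~\ref{pr7} and Lemma~\ref{lema1}, so that both conclusions reduce to facts about the lower Hadamard derivatives. First I would invoke Theorem~\ref{th1}: since $\bar x$ is a local minimizer, we have $\ld 1 f(\bar x;v)\ge 0$ and $\ld 2 f(\bar x;0;v)\ge 0$ for every $v\in\R^n$. Because $f$ is locally Lipschitz, the lower Hadamard and lower Dini first-order derivatives coincide, so $\dini f(\bar x;v)=\ld 1 f(\bar x;v)\ge 0$ for all $v$; in particular $0\in\lsubd 1 f(\bar x)$. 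This records the hypotheses needed to feed into both of the remaining tools.

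Next I would establish the first conclusion, $0\in\partial_u f(\bar x)$. The hypothesis of the proposition gives a unit direction $u$ with $\dini f(\bar x;u)=0$, and from the previous paragraph we already have $\dini f(\bar x;v)\ge 0$ for all $v$. These are exactly the assumptions of Proposition~\ref{pr7} (the Huang--Ng result), which therefore yields $0\in\partial_u f(\bar x)$ directly.

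For the second conclusion I would use Lemma~\ref{lema1} as the crucial bridge. At this stage I have both $0\in\partial_u f(\bar x)$ (just proved) and $0\in\lsubd 1 f(\bar x)$, so the hypotheses of Lemma~\ref{lema1} are met, and it gives the identity $\sld f(\bar x;0;u)=\ld 2 f(\bar x;0;u)$. Combining this with the inequality $\ld 2 f(\bar x;0;u)\ge 0$ supplied by Theorem~\ref{th1} immediately gives $\sld f(\bar x;0;u)\ge 0$, completing the argument.

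There is no single hard step here; the proof is a clean assembly of previously proved statements, and the only point requiring care is verifying that the hypotheses of Proposition~\ref{pr7} and Lemma~\ref{lema1} are genuinely in force before each is applied. The conceptual weight is carried by Lemma~\ref{lema1}, which is what lets the nonnegativity obtained in the Hadamard framework be transported to Chaney's second-order derivative; once that identification is available the result follows at once.
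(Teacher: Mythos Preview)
Your proof is correct and follows essentially the same route as the paper: both use Proposition~\ref{pr7} to obtain $0\in\partial_u f(\bar x)$, then Lemma~\ref{lema1} to identify $\sld f(\bar x;0;u)$ with $\ld 2 f(\bar x;0;u)$, and finally Theorem~\ref{th1} for the nonnegativity. The only cosmetic difference is that the paper cites Proposition~\ref{pr2} rather than Theorem~\ref{th1} plus the Lipschitz coincidence of Dini and Hadamard derivatives to get $\dini f(\bar x;v)\ge 0$ for all $v$, but this is the same fact.
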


\begin{proof}
Let $\dini f(\bar x;u)=0$ for some unit direction $u$. By Propositions \ref{pr2} and \ref{pr7}, we have $0\in\partial_u f(\bar x)$.
Then, by Lemma \ref{lema1},  $\sld f(\bar x;0;u)=\ld 2 f(\bar x;0;u)$. Thus the claim follows from Theorem \ref{th1}.
\end{proof}

Ben-Tal and Zowe introduced the following second-order derivative of a function $f:\E\to\R$ at the point $x\in\E$ in directions $u\in\E$ and $z\in\E$:
\[
f^{\pr\pr}_{BZ}(x;u,z):=\lim_{t\downarrow 0}\, t^{-2}[f(x+tu+t^2 z)-f(x)-t f^{\pr}(x;u)],
\]
where $f^{\pr}(x;u):=\lim_{t\downarrow 0}\, t^{-1}[f(x+tu)-f(x)]$ is the usual directional derivative of first-order. 

The following conditions are necessary for a local minimum in terms of the derivative of Ben-Tal and Zowe \cite{bt85}:
\begin{proposition}
Let $\bar x$ be a local minimizer of $f:\E\to\R$. Then
\begin{equation}\label{41}
f^\pr(\bar x;u)\ge 0,\quad\forall u\in\E,
\end{equation}
\begin{equation}\label{42}
f^\pr(\bar x;u)=0\quad\Rightarrow\quad f^{\pr\pr}_{BZ}(\bar x;u,z)\ge 0,\;\forall z\in\E.
\end{equation}
\end{proposition}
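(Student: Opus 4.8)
The plan is to derive both conditions from Theorem \ref{th1} by embedding the paths that define $f^\pr$ and $f^{\pr\pr}_{BZ}$ into the larger families of paths underlying the lower Hadamard derivatives; the whole argument then rests on the elementary principle that a liminf taken over a larger family of paths cannot exceed a liminf taken over a subfamily.

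First I would dispose of (\ref{41}). Since $\bar x$ is a local minimizer, Theorem \ref{th1} gives $\ld 1 f(\bar x;u)\ge 0$ for every $u\in\E$. Because the lower Hadamard derivative takes its liminf over the strictly larger family of paths $t\downarrow 0$, $u^\pr\to u$, whereas the ordinary directional derivative fixes $u^\pr=u$, one has $\ld 1 f(\bar x;u)\le\dini f(\bar x;u)$; and whenever the limit $f^\pr(\bar x;u)$ exists it coincides with $\dini f(\bar x;u)$. Hence $f^\pr(\bar x;u)=\dini f(\bar x;u)\ge\ld 1 f(\bar x;u)\ge 0$, which is (\ref{41}).

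For (\ref{42}) I would assume $f^\pr(\bar x;u)=0$. From the first-order part $\ld 1 f(\bar x;v)\ge 0$ for all $v$, so $0\in\lsubd 1 f(\bar x)$ and the second-order Hadamard derivative is well defined with $\ld 2 f(\bar x;0;u)\ge 0$ by Theorem \ref{th1}. The key step is to recognize the Ben-Tal--Zowe parabolic arc as a Hadamard path: setting $u^\pr:=u+tz$ one has $u^\pr\to u$ as $t\downarrow 0$ and $\bar x+tu+t^2z=\bar x+tu^\pr$. Since $\ld 2 f(\bar x;0;u)$ is the liminf over \emph{all} paths $u^\pr\to u$, restricting to this particular path can only increase the value, so
\[
\ld 2 f(\bar x;0;u)\le\liminf_{t\downarrow 0}\,2t^{-2}[f(\bar x+tu+t^2z)-f(\bar x)].
\]
Because $f^\pr(\bar x;u)=0$, the bracket equals $f(\bar x+tu+t^2z)-f(\bar x)-tf^\pr(\bar x;u)$, and, the defining limit of $f^{\pr\pr}_{BZ}$ being assumed to exist, the right-hand side is exactly $2f^{\pr\pr}_{BZ}(\bar x;u,z)$. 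Combining, $0\le\ld 2 f(\bar x;0;u)\le 2f^{\pr\pr}_{BZ}(\bar x;u,z)$, which yields (\ref{42}).

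I expect the only genuine obstacle to be spotting the substitution $u^\pr=u+tz$: once the parabolic arc of Ben-Tal and Zowe is identified with a legitimate Hadamard path, both conclusions reduce to the monotonicity of the liminf under enlarging the family of paths, and the hypothesis $f^\pr(\bar x;u)=0$ is precisely what absorbs the linear term $tf^\pr(\bar x;u)$ so that the two difference quotients coincide. Everything else is routine bookkeeping with the definitions.
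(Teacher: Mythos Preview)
Your proof is correct and follows essentially the same route as the paper: the paper states the Ben-Tal--Zowe proposition and then proves, as a separate proposition, that Conditions (\ref{15}) imply (\ref{41}) and (\ref{42}), using exactly the inequality $f^\pr(\bar x;u)\ge\ld 1 f(\bar x;u)$ for the first part and the substitution $u^\pr=u+tz$ to embed the parabolic arc as a Hadamard path for the second. Your argument simply fuses that proposition with Theorem~\ref{th1}, and the core inequality $0\le\frac{1}{2}\ld 2 f(\bar x;0;u)\le f^{\pr\pr}_{BZ}(\bar x;u,z)$ is identical to the paper's.
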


In the next result, we prove that Conditions (\ref{41}) and (\ref{42}) are consequence of (\ref{15}):
\begin{proposition}
Let $f:\E\to\R$ and $\bar x\in\E$ be a given function and a point respectively,  such that the derivatives $ f^\pr(\bar x;u)$ and $f^{\pr\pr}_{BZ}(\bar x;u,z)$ exist for all directions $u\in\E$ and $z\in\E$. Then Conditions (\ref{15}) imply that $(\ref{41})$ and $(\ref{42})$ are satisfied at $\bar x$.
\end{proposition}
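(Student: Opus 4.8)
The goal is to show that the single pair of conditions in (\ref{15}), namely $\ld 1 f(\bar x;u)\ge 0$ and $\ld 2 f(\bar x;0;u)\ge 0$ for all $u$, forces both the first-order Ben-Tal--Zowe condition (\ref{41}) and the second-order one (\ref{42}) under the standing assumption that the derivatives $f^\pr(\bar x;u)$ and $f^{\pr\pr}_{BZ}(\bar x;u,z)$ exist for every direction. The plan is to exploit the fact that the lower Hadamard derivatives are computed as a $\liminf$ over the larger constraint set $(t\downarrow 0,\,u^\pr\to u)$, whereas the Ben-Tal--Zowe derivatives are genuine limits along restricted paths; hence the Hadamard quantities bound the Ben-Tal--Zowe quantities from below.

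First I would dispose of the first-order part. When the ordinary directional derivative $f^\pr(\bar x;u)=\lim_{t\downarrow 0}t^{-1}[f(\bar x+tu)-f(\bar x)]$ exists, it coincides with the value obtained by letting $t\downarrow 0$ along the straight path $u^\pr\equiv u$. Since the lower Hadamard derivative is a $\liminf$ over all approaches $u^\pr\to u$, we have $\ld 1 f(\bar x;u)\le f^\pr(\bar x;u)$; but in fact when the limit exists the restricted approach gives exactly $f^\pr(\bar x;u)$, so I claim $\ld 1 f(\bar x;u)\le f^\pr(\bar x;u)$ and therefore $0\le\ld1 f(\bar x;u)\le f^\pr(\bar x;u)$, which is (\ref{41}). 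The first inequality of (\ref{15}) thus immediately yields (\ref{41}).

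Next, for the second-order part, assume $f^\pr(\bar x;u)=0$ and fix an arbitrary $z\in\E$. The key observation is that the curve $t\mapsto \bar x+tu+t^2 z$ can be rewritten as $\bar x+t u^\pr(t)$ with $u^\pr(t)=u+tz$, and $u^\pr(t)\to u$ as $t\downarrow 0$. Because $0\in\lsubd1 f(\bar x)$ (which holds since (\ref{41}) gives $\ld1 f(\bar x;\cdot)\ge0$, so $x^*=0$ is admissible), the second-order Hadamard derivative $\ld2 f(\bar x;0;u)$ is well defined and equals $\liminf_{t\downarrow0,u^\pr\to u}2t^{-2}[f(\bar x+tu^\pr)-f(\bar x)]$. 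Evaluating this $\liminf$ along the particular admissible path $u^\pr=u^\pr(t)=u+tz$ produces exactly $\lim_{t\downarrow0}2t^{-2}[f(\bar x+tu+t^2z)-f(\bar x)]$, which is $2 f^{\pr\pr}_{BZ}(\bar x;u,z)$ once we use $f^\pr(\bar x;u)=0$ to discard the linear term in the Ben-Tal--Zowe definition. Since a $\liminf$ over a larger index set is at most the limit along any single admissible subfamily, we get $\ld2 f(\bar x;0;u)\le 2 f^{\pr\pr}_{BZ}(\bar x;u,z)$; combined with the second inequality of (\ref{15}) this gives $f^{\pr\pr}_{BZ}(\bar x;u,z)\ge0$, proving (\ref{42}).

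The main obstacle, and the step deserving the most care, is the bookkeeping in the second-order comparison: I must verify that the substitution $u^\pr=u+tz$ is a legitimate approach inside the $\liminf$ defining $\ld2 f(\bar x;0;u)$ (it is, since $u+tz\to u$), and that the term $t x^*_1(u^\pr)=0$ vanishes identically because we take $x^*_1=0$, so no spurious first-order contribution survives. One must also confirm that the assumed existence of the two-sided Ben-Tal--Zowe limit is what lets the $\liminf$ along this path equal the full Ben-Tal--Zowe value rather than merely bounding it; this is precisely where the hypothesis that $f^{\pr\pr}_{BZ}(\bar x;u,z)$ exists as a genuine limit is used. Everything else is a routine matching of definitions.
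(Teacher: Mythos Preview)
Your proof is correct and follows essentially the same route as the paper: compare $f^\pr(\bar x;u)\ge\ld 1 f(\bar x;u)$ for the first-order part, and for the second-order part use the substitution $u^\pr=u+tz$ to recognize the Ben-Tal--Zowe quotient as a particular admissible path in the $\liminf$ defining $\ld 2 f(\bar x;0;u)$, picking up the factor $2$ exactly as the paper does. One tiny slip: where you justify $0\in\lsubd 1 f(\bar x)$ you cite (\ref{41}), but the needed inequality $\ld 1 f(\bar x;\cdot)\ge 0$ is part of the hypothesis (\ref{15}) itself, not a consequence of (\ref{41}).
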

\begin{proof}
Suppose that (\ref{15}) holds. Then the inequality $f^\pr(\bar x;u)\ge\ld 1 f(\bar x;u)\ge 0$ implies that (\ref{41}) is satisfied. Let  $f^\pr(\bar x;u)=0$. Then the chain of relations
\begin{gather*}
f^{\pr\pr}_{BZ}(\bar x;u,z)=\lim_{t\downarrow 0} t^{-2}[f(\bar x+t(u+tz))-f(\bar x)] \\
\ge\liminf_{t\downarrow 0,u^\pr\to u} t^{-2}[f(\bar x+tu^\pr)-f(\bar x)]=
0.5\,\ld 2 f(\bar x;0;u)\ge 0 
\end{gather*}
show that (\ref{42}) is also satisfied for arbitrary $z\in\E$.
\end{proof}

It was obtained by Auslender \cite{aus84} (see Proposition 2.1 and Corollary 2.2) sufficient conditions and necessary ones for an isolated minimum of second-order for a given locally Lipschitz function. The necessary conditions are derived in terms of the upper Dini directional derivative and the sufficient ones in terms of the lower Dini directional derivative. The second-order derivative, which is used in them,  is not consistent with the classical second-order Fr\'echet directional derivative. Both the necessary and the sufficient conditions are consequence of Theorem \ref{th2} taking into account that the function is locally Lipschitz and the Hadamard and Dini derivatives coincide in this case.

Let $f: X\to\R$, be a given C$^{1,1}$ function, defined on an open set $X\subset\R^n$, and $x_0\in X$. The generalized Hessian matrix of $f$ at $x_0$ \cite{husn84}, denoted by $\partial^2 f(x_0)$, is the set of matrices defined as the convex hull of the set

\centerline{\{$M\mid\exists x_i\to x_0$ with $f$ twice differentiable at $x_i$ and $\nabla^2 f(x_i)\to M$\}.}

\noindent
By construction $\partial^2 f(x_0)$ is a nonempty compact convex set. The support bifunction of $\partial^2 f(x_0)$ is the second-order generalized derivative
\[
f^{\pr\pr}_{HUSN}(x;u,v):=\limsup_{x\to x_0,t\downarrow 0}\, t^{-1}[\nabla f(x+tu)(v)-\nabla f(x)(v)].
\]
The following necessary conditions are Theorem 3.1 in \cite{husn84}. We prove that it as a consequence of Theorem \ref{th1}.

\begin{proposition}\label{pr-a}
Let $f$ be a {\rm C}$^{1,1}$ function on some open set $X\subset\R^n$ and $x_0$ be a local minimizer of $f$ over $X$. Then for every direction $d\in\R^n$ there exists a matrix $A\in\partial^2 f(x_0)$ such that $\scalpr{Ad}{d}\ge 0$.
\end{proposition} 
\begin{proof}
Suppose the contrary that there exists $d\in\R^n$ such that $\scalpr{Ad}{d}<0$ for every matrix $A\in\partial^2 f(x_0)$. Since $\partial^2 f(x_0)$ is nonempty and compact, then we have $f^{\pr\pr}_{HUSN}(x_0;d,d)<0$. Therefore 
\[
\limsup_{t\downarrow 0}\, t^{-1}[\nabla f(x_0+td)(d)-\nabla f(x_0)(d)]<0.
\]
Denote $\varphi(t)=f(x_0+td)$. It follows from here that $\varphi^\pr(t)=\nabla f(x_0+td)(d)$ and $\varphi^\pr(0)=0$. Then
\[
\limsup_{t\downarrow 0}\, \varphi^\pr(t)/t<0
\]
Therefore, there exist $\alpha>0$ and $\delta>0$ such that
\[
\varphi^\pr(t)<-\alpha t,\quad\forall t\in(0,\delta).
\]
On the other hand
\[
\varphi(t)-\varphi(0)=\int_0^t \varphi^\pr(s)ds<-\int_0^t\alpha s ds=-\alpha t^2/2.
\]
It follows from here that
\[
\secdini f(x;d)=\liminf_{t\downarrow 0}\, 2[\varphi(t)-\varphi(0)]/t^2<\limsup_{t\downarrow 0}\, 2[\varphi(t)-\varphi(0)]/t^2<-\alpha<0
\]
This result contradicts Proposition \ref{GGR} and Theorem \ref{th1}.
\end{proof}

Sufficient conditions for optimality were not obtained in \cite{husn84}. Therefore we have nothing to compare  with our results.

It is easy to prove the following necessary conditions 
by the arguments of Proposition \ref{pr-a}:

\begin{proposition}\label{pr-b}
Let $f$ be a {\rm C}$^{1,1}$ function on some open set $X\subset\R^n$ and $x_0$ be a local minimizer of $f$ over $X$. Then 
\[
\limsup_{t\downarrow 0}\,t^{-1}[\nabla f(x_0+td)(d)-\nabla f(x_0)(d)]\ge 0,\quad\forall d\in\R^n.
\]
\end{proposition}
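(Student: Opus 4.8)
The plan is to argue by contradiction, exactly along the lines of the proof of Proposition \ref{pr-a}, but in fact more directly: here the quantity in question is itself a $\limsup$, so no appeal to the compactness of the generalized Hessian is needed to produce a negative directional value. Suppose the conclusion fails for some direction $d\in\R^n$, that is
\[
\limsup_{t\downarrow 0}\,t^{-1}[\nabla f(x_0+td)(d)-\nabla f(x_0)(d)]<0.
\]

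First I would reduce the problem to one real variable. Since $x_0$ is a local minimizer of the $C^1$ function $f$, the first-order necessary condition gives $\nabla f(x_0)=0$; in particular $\nabla f(x_0)(d)=0$. Setting $\varphi(t)=f(x_0+td)$, which is well defined and $C^1$ for $|t|$ small because $X$ is open and $f\in C^1$, the chain rule yields $\varphi^\pr(t)=\nabla f(x_0+td)(d)$ and $\varphi^\pr(0)=\nabla f(x_0)(d)=0$. The displayed hypothesis then reads $\limsup_{t\downarrow 0}\varphi^\pr(t)/t<0$, so there exist $\alpha>0$ and $\delta>0$ with
\[
\varphi^\pr(t)<-\alpha t,\quad\forall t\in(0,\delta).
\]

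Integrating, exactly as in Proposition \ref{pr-a}, gives $\varphi(t)-\varphi(0)=\int_0^t\varphi^\pr(s)\,ds<-\alpha t^2/2$ for $t\in(0,\delta)$, the integral being legitimate since $\varphi^\pr$ is continuous. Hence $2[\varphi(t)-\varphi(0)]/t^2<-\alpha$ on $(0,\delta)$, and taking the limit inferior yields
\[
\secdini f(x_0;d)=\liminf_{t\downarrow 0}\,2[\varphi(t)-\varphi(0)]/t^2\le -\alpha<0.
\]
This contradicts the necessary conditions of Proposition \ref{GGR}, equivalently Theorem \ref{th1} together with the coincidence of the lower Hadamard and Dini second-order derivatives for $C^{1,1}$ functions supplied by Lemma \ref{lema4}, which force $\secdini f(x_0;d)=\ld 2 f(x_0;0;d)\ge 0$. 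Since every step is either the first-order condition, the chain rule, or an elementary integration, I do not expect any genuine obstacle; the only point to keep in mind is that $\nabla f(x_0)=0$ must be invoked so that $\varphi^\pr(0)=0$ and the $\limsup$ hypothesis becomes a statement about $\varphi^\pr(t)/t$.
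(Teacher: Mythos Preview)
Your proof is correct and follows essentially the same route the paper indicates: the paper does not give a separate proof of Proposition~\ref{pr-b} but simply says it follows ``by the arguments of Proposition~\ref{pr-a}'', and your argument is precisely that argument with the initial compactness step removed (since here the $\limsup$ is assumed negative directly). Your explicit remark that $\nabla f(x_0)=0$ is needed to get $\varphi^\pr(0)=0$ is a helpful clarification the paper leaves implicit.
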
 

Consider the second-order lower directional derivative \cite{yj92}: 
\[
f^{\pr\pr}_{YJ}(x;u,v):=\sup_{z\in X}\limsup_{t\downarrow 0}\, t^{-1}[\nabla f(x+tz+tu)(v)-\nabla f(x+tz)(v)].
\]

\begin{proposition}[\cite{yj92}]\label{pr-c}
Let $f$ be a {\rm C}$^{1,1}$ function on some open set $X\subset\R^n$ and $x_0$ be a local minimizer of $f$ over $X$. Then 
$f^{\pr\pr}_{YJ}(x;d,d)\ge 0$ for every direction $x\in\R^n$.
\end{proposition}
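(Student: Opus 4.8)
The plan is to reduce the claim immediately to Proposition \ref{pr-b}, which has been recorded precisely for this purpose. The essential observation is that $f^{\pr\pr}_{YJ}(x_0;d,d)$ is defined as a \emph{supremum} over the auxiliary variable $z$, so it dominates the inner $\limsup$ evaluated at any single admissible $z$. I would therefore simply select $z=0$. Substituting this into the defining expression collapses the two moving base points $x_0+tz+td$ and $x_0+tz$ to $x_0+td$ and $x_0$, respectively, which yields the lower bound
\[
f^{\pr\pr}_{YJ}(x_0;d,d)\ge\limsup_{t\downarrow 0}\,t^{-1}[\nabla f(x_0+td)(d)-\nabla f(x_0)(d)].
\]

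The right-hand side is exactly the quantity shown to be nonnegative in Proposition \ref{pr-b} for every direction $d\in\R^n$ at a local minimizer of a ${\rm C}^{1,1}$ function. Hence $f^{\pr\pr}_{YJ}(x_0;d,d)\ge 0$ for every $d$, which is the assertion (the index is written $x$ in the statement, but it plainly denotes the direction $d$). No additional estimate of the difference quotient is required, since all the analytic work—reducing to the scalar function $\varphi(t)=f(x_0+td)$ and integrating $\varphi^{\pr}$—has already been carried out in the proof of Proposition \ref{pr-b}.

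The single point that warrants a comment is the admissibility of $z=0$ in the supremum. In the Yang--Jeyakumar derivative the auxiliary variable ranges over the full direction space, so $z=0$ is allowed directly and the argument above is complete. If one instead reads the index set literally as $X$ and $0\notin X$, then I would pick any sequence $z_k\to 0$ and pass to the limit, using that $\nabla f$ is continuous—indeed locally Lipschitz, since $f\in{\rm C}^{1,1}$—to recover the $z=0$ expression as a lower bound for the supremum. In either reading this is a routine technicality rather than a genuine obstacle; the entire substance of the proposition is inherited from Proposition \ref{pr-b}, and the only work is to recognize the $z=0$ slice of the supremum.
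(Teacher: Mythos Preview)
Your proposal is correct and matches the paper's own treatment exactly: the paper does not give a separate proof of Proposition~\ref{pr-c} but simply remarks that it ``obviously follows from Proposition~\ref{pr-b}'', and your argument---selecting $z=0$ in the supremum (with the continuity fallback if $0\notin X$)---is precisely the one-line reduction the paper has in mind. Nothing further is needed.
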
 

Proposition \ref{pr-b} is sharper than Propositions \ref{pr-a}, \ref{pr-c} and the necessary optimality conditions in the paper \cite{bp04}, also Theorem 7.1 in the paper \cite{jl98}, because they obviously follow from Proposition \ref{pr-b}.

In several papers Rockafellar studied the epi-derivatives, which were introduced by the same author. We prove that the optimality conditions for unconstrained problems follow from our results as particular case.

\begin{definition}[\cite{roc88}]
Let a family of subsets $S_t\subset\E$, which is parametrized (or indexed) by $t>0$, be given. One says that $S_t$ converges to a subset $S$ as $t\downarrow 0$, written $S=\lim_{t\downarrow 0} S_t$, iff
\[
S=\limsup_{t\downarrow 0} S_t=\liminf_{t\downarrow 0} S_t,
\]
where $\limsup_{t\downarrow 0} S_t$ and $\liminf_{t\downarrow 0} S_t$ are the upper Kuratowski-Painlev\'e limit of sets and the lower one.
\end{definition}

\begin{definition}[\cite{roc88}]
Consider a family of functions $\varphi:\R^n\to\overline\R$, where $\overline\R=[-\infty,\infty]$. One says that $\varphi_t$ epi-converges to a function $\varphi:\R^n\to\overline\R$ as $t\downarrow 0$, written
\[
\varphi={\rm epi-}\lim_{t\downarrow 0}\varphi_t,
\]
iff the epigraphs {\rm epi}$\,\varphi_t$  converge to the epigraph {\rm epi}$\,\varphi$ in $\R^n\times\R$ as $t\downarrow 0$ in the sense of Kuratowski-Painlev\'e. 
\end{definition}

Note that in this case $\varphi$ must be  a lower semicontinuous function, if it is a epi-limit.
It follows from this definition that
\[
\varphi(\xi)=\limsup_{t\downarrow 0}\inf_{\xi^\pr\to\xi} \varphi_t(\xi^\pr)=\liminf_{t\downarrow 0}\inf_{\xi^\pr\to\xi} \varphi_t(\xi^\pr),
\]
where
\[
\limsup_{t\downarrow 0}\inf_{\xi^\pr\to\xi} \varphi_t(\xi^\pr)=\lim_{\varepsilon\downarrow 0}\lim_{\tau\downarrow 0}
\sup_{t\in (0,\tau)}\inf_{\xi^\pr\in\xi+\varepsilon B} \varphi_t(\xi^\pr),
\]
\begin{equation}\label{rtr}
\liminf_{t\downarrow 0}\inf_{\xi^\pr\to\xi} \varphi_t(\xi^\pr)=\lim_{\varepsilon\downarrow 0}\lim_{\tau\downarrow 0}
\inf_{t\in (0,\tau)}\inf_{\xi^\pr\in\xi+\varepsilon B} \varphi_t(\xi^\pr)=\liminf_{t\downarrow 0,\;\xi^\pr\to\xi}\varphi_t(\xi^\pr).
\end{equation}
Here $B$ is the unit closed ball centered at the origin (see \cite[p. 82]{roc88}).

\begin{definition}[\cite{roc88}]
A function $f$ is said to be epi-differentiable at a point $x$ iff the first-order difference quotient functions
\[
\varphi_{x,t}(\xi)=[f(x+t\xi)-f(x)]/t\quad\textrm{for } t>0
\]
have the property that the limit function $f^\pr_x:={\rm epi-}\lim_{t\downarrow 0}\varphi_{x,t}$ exists and $f^\pr_x(0)>-\infty$. Then the values $f^\pr_x(\xi)$ are called first-order directional derivatives of $f$ at $x$. A vector $v\in\E$ is a epi-gradient of $f$ at $x$ iff
$f^\pr_x(\xi)\ge\scalpr{\xi}{v}$ for all $\xi\in\E.$
\end{definition}

\begin{definition}[\cite{roc88}]
A function $f$ is called twice epi-differentiable at $x$ relative to a vector $v$ iff it is (once) epi-differentiable at $x$ in the sense of the preceding definition and the second-order difference quotient functions
\[
\varphi_{x,v,t}(\xi)=2[f(x+t\xi)-f(x)-t\scalpr{\xi}{v}]/t^2
\]
have the property that the limit function $f^{\pr\pr}_{x,v}:={\rm epi-}\lim_{t\downarrow 0}\varphi_{x,v,t}$
exists and $f^{\pr\pr}_{x,v}(0)>-\infty$. Then the values $f^{\pr\pr}_{x,v}(\xi)$ are called second-order (directional) epi-derivatives of $f$ at $x$ relative to $v$.
\end{definition}

\begin{proposition}[\cite{roc89}]\label{pr3}
Let $f:\R^n\to\overline\R$ be a lower semicontinuous function, and let $x$ be a point where $f$ is finite and twice epi-differentiable

{\rm (a)} (Necessary condition). If $f$ has a local minimum at $x$, then $0$ is an epi-gradient of $f$ at $x$ and $f^{\pr\pr}_{x,0}\ge 0$ for all $\xi$.

{\rm (b)} (Sufficient conditions) If 0 is an epi-gradient of $f$ at $x$ and $f^{\pr\pr}_{x,0}>0$ for all $\xi\ne 0$, then $f$ has a second-order isolated  local minimum at $x$.
\end{proposition}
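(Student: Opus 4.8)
The plan is to show that, whenever $f$ is twice epi-differentiable at $x$, Rockafellar's epi-derivatives coincide with the lower Hadamard derivatives of the present paper, and then to obtain (a) directly from Theorem~\ref{th1} and (b) directly from Theorem~\ref{th2}. The one structural fact driving everything is that epi-convergence forces the epi-limit to agree with the lower epi-limit, and by the last equality in (\ref{rtr}) the lower epi-limit is precisely a liminf of Hadamard type.

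First I would establish the first-order identity. Since $f$ is epi-differentiable at $x$, the epi-limit $f^\pr_x$ of the quotients $\varphi_{x,t}(\xi^\pr)=[f(x+t\xi^\pr)-f(x)]/t$ exists, so it equals the lower epi-limit, which by (\ref{rtr}) gives
\[
f^\pr_x(\xi)=\liminf_{t\downarrow 0,\,\xi^\pr\to\xi}\,t^{-1}[f(x+t\xi^\pr)-f(x)]=\ld 1 f(x;\xi)
\]
for every $\xi\in\E$. In particular, ``$0$ is an epi-gradient of $f$ at $x$'', i.e. $f^\pr_x(\xi)\ge\scalpr{\xi}{0}=0$ for all $\xi$, is exactly the statement $\ld 1 f(x;\xi)\ge 0$ for all $\xi$, that is $0\in\lsubd 1 f(x)$. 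Repeating the argument at second order relative to $v=0$, where $\varphi_{x,0,t}(\xi^\pr)=2t^{-2}[f(x+t\xi^\pr)-f(x)]$ because $\scalpr{\xi^\pr}{0}=0$, twice epi-differentiability makes the epi-limit exist and hence equal the lower epi-limit, so (\ref{rtr}) yields
\[
f^{\pr\pr}_{x,0}(\xi)=\liminf_{t\downarrow 0,\,\xi^\pr\to\xi}\,2t^{-2}[f(x+t\xi^\pr)-f(x)]=\ld 2 f(x;0;\xi).
\]
Here the right-hand side is legitimate precisely because $0\in\lsubd 1 f(x)$, which holds as a hypothesis in (b) and as a conclusion of Theorem~\ref{th1} in (a).

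With these two identities the proposition becomes a translation. For (a), if $f$ has a local minimum at $x$ then Theorem~\ref{th1} gives $\ld 1 f(x;\xi)\ge 0$ and $\ld 2 f(x;0;\xi)\ge 0$ for all $\xi$; reading these back through the identities says that $0$ is an epi-gradient and $f^{\pr\pr}_{x,0}(\xi)\ge 0$ for all $\xi$. For (b), the two hypotheses ``$0$ is an epi-gradient'' and ``$f^{\pr\pr}_{x,0}(\xi)>0$ for all $\xi\ne 0$'' are, under the identities, exactly condition (\ref{6}) of part b) of Theorem~\ref{th2}; hence that theorem gives that $x$ is an isolated local minimizer of second order, which is the asserted second-order isolated local minimum.

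The genuine work, and the step I would treat most carefully, is the passage from epi-convergence to the Hadamard-type liminf through (\ref{rtr}); the definition of twice epi-differentiability supplies the existence of both epi-limits, and it is only the identification of the epi-limit with the lower one that must be invoked. Everything after that is a direct specialization of Theorems~\ref{th1} and~\ref{th2}, with no further analysis required.
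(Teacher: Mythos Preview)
Your proof is correct and follows essentially the same approach as the paper: identify the epi-derivatives with the lower Hadamard derivatives via (\ref{rtr}) under the twice epi-differentiability hypothesis, and then read off (a) and (b) from Theorems~\ref{th1} and~\ref{th2}. You spell out more carefully than the paper why $0\in\lsubd 1 f(x)$ is available before invoking the second-order identity, but the argument is otherwise the same.
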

\begin{proof}
It follows from (\ref{rtr}) that $\ld 1 f(x;\xi)=f^\pr_x(\xi)$, the set of epi-gradients coincides with $\lsubd 1 f(x)$ when the function is epi-differentiable. Again, by (\ref{rtr}), $\ld 2 f(x;v^*;\xi)=f^{\pr\pr}_{x,v^*}(\xi)$ when the function is twice epi-differentiable. Therefore, the necessary conditions follow from Theorem 1, the sufficient conditions follow from Theorem \ref{th2}.
\end{proof}


A similar notion was introduced by Cominetti \cite{com91}, where the epi-convergence is replaced by Mosco convergence of sets. The optimality conditions for unconstrained problems are also particular case of Theorems \ref{th1} and \ref{th2}.

The derivative, which were used in \cite{hua05}, do not coincides with the second-order classical derivative. Therefore, it is not a real derivative. The conditions there cannot be generalized to higher-order ones.


\begin{thebibliography}{}
\bibitem{atf87} Alekseev, V.~M, Tikhimirov,  V.~M., Fomin,  S.~V: Optimal Control, Plenum Publishing Corporation, New York (1987)

\bibitem{aus84} Auslender, A.: Stability in mathematical programming with
nondifferentiable data. SIAM J. Control Optim. {\bf 22}, 239--254  (1984)

\bibitem{bp04} Bednarik, D., Pastor, K.: Elimination of strict convergence in optimization. SIAM J. Control Optim. {\bf 43}, 1063--1077  (2004)

\bibitem{pas08} Bedna\v r\'ik, D., Pastor, K.: On second-order conditions in unconstrained optimization. Math.~Program. Ser A {\bf 113}, 283--291  (2008)


\bibitem{bt85} Ben-Tal, A.,Zowe, J.: Directional derivatives in nonsmooth
optimization. J. Optim. Theory Appl. {\bf 47}, 483--490  (1985)


\bibitem{cha94} Chan, W.~L., Huang, L.~R., Ng,  K.~F: On generalized second-order derivatives and Taylor expansion in nonsmooth optimization. SIAM J. Control Optim. {\bf 32}, 591--611  (1994)

\bibitem{cha87} Chaney, R.W.:  Second-order necessary conditions in constrained semismooth optimization. SIAM J. Control Optim. {\bf 25}, 1072--1081  (1987)

\bibitem{com91} Comenetti,  R.: On pseudo-differentiability, Trans. Amer. Math. Soc. {\bf 324}, 843--865
(1991)

\bibitem{cc90} Cominetti, R., Correa,  R.: A generalized second-order derivative in nonsmooth optimization. SIAM J. Control Optim. {\bf 28}, 789--809  (1990)


\bibitem{die81} Diewert, W.E.:  Alternative characterizations of six kind of quasiconvexity in the nondifferentiable case with applications to nonsmooth programming. In: Schaible, S., Ziemba,  W.T. (eds.) Generalized Concavity in Optimization and Economics, pp. 51--93.  Academic Press, New York (1981)

\bibitem{d-a-z81} Diewert, W.E., Avriel, M., Zang, I.: Nine kinds of quasiconcavity and concavity. J. Econom. Theory {\bf 25}, 397--420  (1981)

\bibitem{gin02} Ginchev, I.: Higher order optimality conditions in nonsmooth optimization. Optimization {\bf 51}, 47--72  (2002)

\bibitem{ggr06}  Ginchev,  I., Guerraggio,  A., Rocca,  M.: From scalar to vector
optimization. Appl. Math. {\bf 51}, 5--36  (2006)

\bibitem{bas01} Ginchev, I., Ivanov,  V.I.: Higher order directional derivatives for nonsmooth functions. C. R. Acad. Bulgare Sci. {\bf 54}, 33--38  (2001)

\bibitem{ggt04} Giorgi, G., Guerraggio, A., Thierfelder: A. Mathematics of Optimization, Elsevier  Science \& Technology Books, Amsterdam (2004)

\bibitem{had93} Hadjisavvas, N., Schaible, S.: On strong pseudomonotonicity and (semi)strict quasimonotonicity. J. Optim. Theory Appl. {\bf 79}, 139--155  (1993)

\bibitem{han81} Hanson, M.A.: On sufficiency of the Kuhn-Tucker conditions. J. Math. Anal. Appl. {\bf 80}, 545--550  (1981)

\bibitem{husn84} Hiriart-Urruty, J.-B., Strodiot, J.~J., Nguyen,  V.~H.:
Generalized Hessian matrix and second-order optimality conditions for problems with C$^{1,1}$ data. Appl. Math. Optim. {\bf 11}, 169--180  (1984)

\bibitem{hua05} Huang, L.R.: Separate necessary and sufficient conditions for
the local minimum of a function. J. Optim. Theory Appl. {\bf 125}, 241--246 (2005) 

\bibitem{hua94} Huang, L.R., Ng,  K.F.: Second-order necessary and sufficient conditions in nonsmooth optimization. Math. Program. {\bf 66}, 379--402  (1994)

\bibitem{optimization} Ivanov, V.I.: Second-order invex functions in nonlinear programming. Optimization  {\bf 61}, 489--503 (2012)  

\bibitem{jl98} Jeyakumar, V.,  Luc,  D.~T.: Approximate Jacobian matrices for nonsmooth continuous maps and C$^1$ optimization. SIAM J. Control Optim. {\bf 36}, 1815--1832  (1998)

\bibitem{jim02} Jimenez, B.: Strict efficiency in vector optimization. J. Math. Anal. Appl. {\bf 265}, 264--284 (2002)
 

\bibitem{roc88} Rockafellar, R.T.: First-order and second-order epi-differentiability in nonlinear programming. Trans. Amer. Math. Soc. {\bf 307}, 75--108 (1988)

\bibitem{roc89} Rockafellar, R.T.: Second-order optimality conditions in nonlinear programming obtained by way of epi-derivatives.  Math. Oper. Res. {\bf 14}, 462--484 (1989)

\bibitem{yan96} Yang,  X.~Q.: On second-order directional derivatives. Nonlinear Anal. {\bf 26}, 130--137  (1996) 

\bibitem{yan99} Yang, X.~Q.: On relations and applications of generalized second-order directional derivatives. Nonlinear Anal. {\bf 36}, 595--614  (1999)

\bibitem{yj92} Yang, X.~Q., Jeyakumar,  V.: Generalized second-order directional
derivatives and optimization with C$^{1,1}$ functions. Optimization {\bf 26}, 165--185  (1992)

\end{thebibliography}
\end{document}